\theoremstyle{plain}
\newtheorem{theorem}{Theorem}[section]
\newtheorem{proposition}{Proposition}[section]
\newtheorem{corollary}{Corollary}[section]
\newtheorem{lemma}{Lemma}[section]
\newtheorem{remark}{\bf Remark}[section]
\theoremstyle{definition}
\renewcommand{\Re}{\mathop\mathrm{Re}}
\newcommand{\D}{\mathrm{D}}
\title
[Interpolation inequalities for discrete
 operators ]
 {Sharp interpolation inequalities for discrete
 operators and applications}
\author[Alexei Ilyin, Ari Laptev, Sergey Zelik]
{Alexei Ilyin, Ari Laptev, Sergey Zelik}
\keywords{Discrete operators, Sobolev inequality, interpolation inequalities,
 Green's function, sharp constants, Lieb--Thirrng inequalities,  Carlson inequality.}
\address
{\noindent\newline  Keldysh Institute of Applied Mathematics and
Institute for Information Transmission Problems;\newline
Imperial College London and Institute Mittag--Leffler;
\newline University of Surrey, Department of Mathematics and
Keldysh Institute of Applied Mathematics}
\email{ ilyin@keldysh.ru; a.laptev@imperial.ac.uk;
\newline s.zelik@surrey.ac.uk}
\begin{document}

\maketitle

\medskip

\bigskip
\begin{quote}
{\normalfont\fontsize{8}{10}
\selectfont{\bfseries  Abstract.}
We consider interpolation inequalities for
imbeddings of the $l^2$-sequence spaces over
$d$-dimensional lattices into  the $l^\infty_0$ spaces
written as interpolation inequality between the
$l^2$-norm of a sequence and its difference.
A general method is developed for finding sharp constants,
extremal elements and correction terms in this type of inequalities.
Applications to Carlson's inequalities and spectral theory
of discrete operators are given.
%
%
%
%
}
\end{quote}

\setcounter{equation}{0}
\section{Introduction}\label{S:Intro}

In this paper we study imbeddings of the sequence
space $l^2(\mathbb{Z}^d)$ into $l^\infty_0(\mathbb{Z}^d)$
written in terms of a interpolation inequality involving
the $l^2$-norms both of the sequence $u\in l^2(\mathbb{Z}^d)$,   and
the sequence of differences $\nabla u$, where for $u\in l^2(\mathbb{Z})$
and $n\in\mathbb{Z}$
$$
\D u(n)=u(n+1)-u(n),
$$
and for $ u\in l^2(\mathbb{Z}^d)$ and $n\in\mathbb{Z}^d$
$$
\nabla  u(n)=\{\D_1u(n),\dots,\D_du(n)\},
\quad \|\mathrm{D}u\|^2=\|\nabla u\|^2=\sum_{i=1}^d\|\D_iu\|^2.
$$

Before we describe the content of the paper in greater detail we give a
simple but important example \cite{Arman}, namely, let us prove
the one-dimensional inequality
\begin{equation}\label{1D}
\sup_nu(n)^2\le \|u\|\|\D u\|.
\end{equation}
The proof repeats that in the continuous case.
For an arbitrary $\gamma\in\mathbb{Z}$ we have
\begin{multline*}
2u^2(\gamma)=\biggl(\sum_{n=-\infty}^{\gamma-1}-\sum_{n=\gamma}^{\infty}\biggr)\D u^2(n)=\\=
\biggl(\sum_{n=-\infty}^{\gamma-1}-\sum_{n=\gamma}^{\infty}\biggr)
\bigl(u(n+1)\D u(n)+u(n)\D u(n)\bigr)\le\\\le
\sum_{n=-\infty}^\infty\left(|u(n+1)\D u(n)|+|u(n)\D u(n)|\right)\le
2\|u\|\|\D u\|.
\end{multline*}

Below we consider separately interpolation  inequalities
of the form
\begin{equation}\label{Dtheta}
   \sup_{\mathbf{n\in\mathbb{Z}}^d}
   u(\mathbf{n})^2\le \mathrm{K}_d(\theta)\|u\|^{2\theta}
   \|\nabla u\|^{2(1-\theta)}
,\quad 0\le\theta\le1
\end{equation}
in dimension $d=1,2$ and $d\ge3$. By notational definition
 $\mathrm{K}_d(\theta)$ is the sharp constant in this inequality.
This inequality clearly holds for $\theta=1$ (with
$\mathrm{K}_d(1)=1$), and if it holds for a $\theta=\theta_*\in[0,1)$,
then it holds for $\theta\in[\theta_*,1]$, when the
`weight' of the stronger norm $\|u\|$ is getting larger (see~\eqref{Poin_ineq}).

For $d=1$ we show that \eqref{Dtheta} holds for
$1/2\le\theta\le 1$ and find explicitly
the corresponding sharp constant:
\begin{equation}\label{K(theta)intro}
\mathrm{K}_1(\theta)=\frac12\left(\frac2\theta\right)^\theta
(2\theta-1)^{\theta-1/2}.
\end{equation}
In the limiting case $\theta=1/2$ we have $\mathrm{K}_1(1/2)=1$, and we supplement
 inequality \eqref{1D} (which is, in fact, sharp)
with a refined inequality
\begin{equation}\label{Lap1/2int}
u(0)^2\le\frac12\sqrt{4-\frac{\|\D u\|^2}{\|u\|^2}}\,
\|u\|\|\D u\|,
\end{equation}
which for any $d\in(0,4)$ has a unique extremal
sequence $u^*$ with $\|\D u^*\|^2/\|u^*\|^2=d$.

In the 2D case \eqref{Dtheta} holds for $0<\theta\le1$
and the sharp constant is given by
\begin{equation}\label{K(theta)2intro}
\mathrm{K}_2(\theta)=\frac2\pi\frac{1}{\theta^\theta(1-\theta)^{1-\theta}}\cdot
\max_{\lambda>0} \frac
{\lambda^{\theta}K\left(\frac{4}{{4+\lambda}}\right)}
{4+\lambda},
\end{equation}
where $K$ is the complete elliptic integral of the first
kind, see~\eqref{K}.
The constant $\mathrm{K}_2(\theta)$ logarithmically
tends to $\infty$ as $\theta\to 0^+$, and for $\theta=0$
we have the following limiting logarithmic inequality
of Brezis--Galluet type:
\begin{equation}\label{log-intro}
\aligned
u(0,0)^2\le \frac1{4\pi}\frac{\|\nabla u\|^2}{\| u\|^2}
\left(1-\frac{\|\nabla u\|^2}{8\| u\|^2}\right)
\left(\ln\frac{16}
{\frac{\|\nabla u\|^2}{\| u\|^2}
\left(8-\frac{\|\nabla u\|^2}{\| u\|^2}\right)}+\right.\\
+\left.\ln\left(1+\ln\frac{16}
{\frac{\|\nabla u\|^2}{\| u\|^2}
\left(8-\frac{\|\nabla u\|^2}{\| u\|^2}\right)}\right)+2\pi\right),
\endaligned
\end{equation}
where the constants in front of logarithms  and $2\pi$ are
sharp. The inequality saturates for $u=\delta$,
otherwise the inequality is strict.

Finally, in dimension three and higher the  inequality
holds for the limiting exponent $\theta=0$:
\begin{equation}\label{3Dintro}
u(0)^2\le \mathrm{K}_d\|\nabla u\|^2,
\end{equation}
where the sharp constant is given by
\begin{equation}\label{K3intro}
\mathrm{K}_d=\frac{1}{4(2\pi)^d}
\int_0^{2\pi}\!\dots\!\int_0^{2\pi}
\frac{dx_1\dots dx_d}{\sin^2\frac{x_1}2+\dots+
\sin^2\frac{x_d}2}\,.
\end{equation}
In the three dimensional case the constant
$\mathrm{K}_3$ can be evaluated in closed form since
it is expressed in terms of  the so-called third Watson's triple integral:
\begin{equation}\label{K3formula}
\mathrm{K}_3=\frac{1}{2}W_S=0.2527\dots,
\end{equation}
where (see \cite{lattice_sums} and the references therein)
\begin{equation}\label{W_S}
\aligned
W_S:=\frac1{\pi^3}\int_0^\pi\int_0^\pi\int_0^\pi
\frac{dxdydz}{3-\cos x-\cos y-\cos z}=\\=
\frac{\sqrt{6}}{12(2\pi)^3}
\Gamma(\tfrac1{24})\Gamma(\tfrac5{24})\Gamma(\tfrac7{24})\Gamma(\tfrac{11}{24}).
\endaligned
\end{equation}

It is natural to compare interpolation inequalities for
differences and inequalities for derivatives in the continuous case.
While in the continuous case the $L_\infty$-norm is the strongest
(at least locally), in the discrete case the $l^1$-norm is
the strongest. Obviously, $\|u\|_{l^\infty}\le\|u\|_{l^p}$ for $p\ge1$,
and therefore $\|u\|_{l^p}\le\|u\|_{l^q}$ for $q\le p$:
$$
\|u\|_{l^p}^p\le\|u\|_{l^\infty}^{p-q}\|u\|_{l^q}^q\le
\|u\|_{l^p}^{p-q}\|u\|_{l^q}^q.
$$
Also, unlike the continuous case, the difference operator is
\textit {bounded\/}:
\begin{equation}\label{Poin_ineq}
\|\mathrm{D}u\|_{l^2(\mathbb{Z}^d)}^2\le4d\|u\|_{l^2(\mathbb{Z}^d)}^2.
\end{equation}

Roughly speaking, the situation
(at least in the one-dimensional case) is as follows.
The discrete inequality \eqref{Dtheta} for $d=1$ holds
for $\theta\in[1/2,1]$, while the corresponding
continuous inequality
$$
 \|f\|_\infty^2\le \mathrm{C}_1(\theta)\|f\|^{2\theta}
   \|f'\|^{2(1-\theta)}
,\quad f\in H^1(Q)
$$
holds only for $\theta=1/2$ in case when $Q=\mathbb{R}$,
and for $\theta\in[0,1/2]$ for periodic function with zero mean,
 $Q=\mathbb{T}^1$.
Hence, it makes sense to compare the constants at a unique
common point $\theta_*=1/2$ where both constants are equal to $1$.
For $n$-order   derivatives and differences, $n>1$,
the constants in the discrete inequalities are strictly greater
than those in the continuous case, the corresponding $\theta_*=1-1/(2n)$.

For example, the second-order inequality on the line $\mathbb{R}$
and the corresponding discrete inequality are as follows
$$
\aligned
\|f\|_{L_\infty(\mathbb{R})}^2&\le \frac{\sqrt{2}}{\sqrt[4]{27}}\|f\|^{3/2}
   \|f''\|^{1/2}
,\quad f\in H^2(\mathbb{R}),\\
\|u\|_{l^\infty(\mathbb{Z})}^2&\le \frac{\sqrt{2}}2\|u\|^{3/2}
   \|\Delta u\|^{1/2}
,\quad u\in l^2(\mathbb{Z}).
\endaligned
$$
Both constants are sharp, the second one is strictly greater than the first.
Up to a constant factor (and shift of the origin) the family of extremal
functions in the first inequality is produced by scaling
 $x\to\lambda x$, $\lambda>0$ of the extremal $f_*( x)$, where
$$
\int_{-\infty}^\infty\frac{e^{-ixy}\,dx}{x^4+1}=
\frac{\pi\sqrt{2}}2f_*\biggl(\frac{y}{\sqrt{2}}\biggr),\qquad
f_*(x)=e^{-|x|}(\cos x+\sin|x|),
$$
In the discrete inequality the unique extremal sequence is
$\{u_*(n)\}_{n=-\infty}^\infty$,
$$
u_*(n)=\int_{0}^\pi\frac{\cos nx\,dx}{\lambda_*+16\sin^4\frac x2},
\quad \text{where }\quad \lambda_*=\frac{16}3;
$$
see~\eqref{expl2ord} for the explicit formula for $u_*(n)$.

In two dimensions in the continuous case the imbedding
$H^1\subset L_\infty$ holds only with a logarithmic
correction term involving higher Sobolev norms (and  $\theta=0$), which is the well-known
Brezis--Gallouet inequality. On the contrary, in the $2D$
discrete case inequality \eqref{Dtheta} holds for
$\theta\in(0,1]$ and also requires a logarithmic correction
for $\theta=0$, see~\eqref{log-intro}.

In higher
dimensional case $d\ge3$ the imbedding $H^1\subset L_\infty$
fails at all, while inequality \eqref{Dtheta} holds for all
$\theta\in[0,1]$.

Next, we consider applications of discrete interpolation inequalities.
Using the discrete Fourier transform and Parseval's identities we
show that each discrete interpolation inequality is equivalent to an
integral Carslon-type inequality. For example, in the  1D case, setting
for  a function $g\in L_2(0,2\pi)$
$$
I_1:=\int_0^{2\pi}g(x)dx,\ I_2^2:=\int_0^{2\pi}g(x)^2dx,
\ \hat{I}_2^2:=\int_0^{2\pi}4\sin^2\tfrac x2g(x)^2dx,
$$
we obtain that inequality~\eqref{1D} is equivalent to  the sharp inequality
$$
I_1^2\le2\pi I_2\hat{I}_2,
$$
with no extremal functions, while the refined inequality~\eqref{Lap1/2int}
is equivalent to the inequality
$$
I_1^2\le\pi\sqrt{4-\frac{\hat{I}_2^2}{{I}_2^2}}\, I_2\hat{I}_2,
$$
saturating for each $\lambda\in(-\infty,-4)\cup(0,\infty)$ at
$$
g_\lambda(x)=\frac1{\lambda+4\sin^2\frac x2}\,.
$$

Developing further this approach we prove a Sobolev $l^q$-type discrete
inequality for a non-limiting exponent
\begin{equation}\label{Sobolev}
\|u\|_{l^q(\mathbb{Z}^d)}\le \mathrm{C}(q,d)\|\nabla u\|
\quad\text{for}\quad q>{2d}/(d-2).
\end{equation}
Our explicit estimate for the  constant $\mathrm{C}(q,d)$ is non-sharp, moreover, it
blows up as $q\to {2d}/(d-2)$ however, it is sharp in the
limit $q\to\infty$.

Finally, we apply the results on discrete inequalities to
the estimates of negative eigenvalues of discrete Schr\"odinder
operators
\begin{equation}\label{Schr_int}
-\Delta -V
\end{equation}
acting in $l^2(\mathbb{Z}^d)$. Here $-\Delta:=\mathrm{D}^*\mathrm{D}$ and
$V(n)\ge0$. Each discrete interpolation inequality
for the imbedding into $l^\infty(\mathbb{Z}^d)$ produces by the method
of~\cite{E-F} a collective inequality for families of orthonormal
sequences, which, in turn, is equivalent to a Lieb-Thirring
estimate for the negative trace.
For example, we deduce from \eqref{3Dintro} the estimate
$$
\sum_{\lambda_j<0}|\lambda_j|\le \frac{\mathrm{K}_d}{4}\sum_{\alpha\in\mathbb{Z}^d}
V^{2}(\alpha),
$$
which holds for $d\ge3$.

We finally point out that in the continuous case the classical
Lieb--Thirring inequality for the negative trace of
operator~\eqref{Schr_int} in $L_2(\mathbb{R}^d)$ is
as follows
(see \cite{L-T}, \cite{Lap-Weid}, \cite{D-L-L})
$$
\sum_{\lambda_j<0}|\lambda_j|\le \mathrm{L}_{1,d}
\int_{\mathbb{R}^d}V^{1+d/2}(x)dx.
$$

\setcounter{equation}{0}
\section{1D case}\label{S:1D}

Since $u_n\to0$ as $|n|\to\infty$, without loss of generality we can assume that $\sup_n u(n)^2=u(0)^2$.

We consider a more general problem of finding
sharp constants, existence of extremals and possibly correction terms in the inequalities of the type
\begin{equation}\label{1Dtheta}
   u(0)^2\le \mathrm{K}_1(\theta)\|u\|^{2\theta}
   \|\D u\|^{2(1-\theta)}
,\quad 0\le\theta\le1,
\end{equation}
including, to begin with, the problem of
finding those $\theta$ for which \eqref{1Dtheta}
holds at all. Here
$$\|u\|^2=
\sum_{k=-\infty}^{\infty}
u(k)^2,\quad
\|\D u\|^2=\sum_{k=-\infty}^{\infty}
\D u(k)^2.
$$

Since $|a-b|\ge||a|-|b||$,
we have
\begin{equation}\label{D|u|}
\|\mathrm{D}|u|\|\le\|\mathrm{D}u\|,
\quad\text{where}\quad
|u|:=\{|u(n)|\}_{n=-\infty}^\infty,
\end{equation}
and  we could have further reduced
our treatment to the case when $u(n)\ge0$. However,
we shall be dealing below with a more general problem~\eqref{V(d)} which has  both sing-definite and non-sign-definite extremals.  We have the following `reverse' Poincare inequality:
\begin{equation}\label{Poincare}
\|\mathrm{D}u\|^2\le\left\{
\begin{array}{ll} 2\|u\|^2, & u \hbox{ is sign definite;} \\
4\|u\|^2, & \hbox{otherwise.}
\end{array}
\right.
\end{equation}

The adjoint to $\D$ is the operator:
$$
\D^*u(n)=-(u(n)-u(n-1)),
$$
and
$$
\D^*\D u(n)=\D\D^*u(n)=-\bigl(u(n+1)-2u(n)+u(n-1)\bigr).
$$

To find the sharp constant $\mathrm{K}_1(\theta)$
in~\eqref{1Dtheta} we consider a more general problem:
find $\mathbb{V}(d)$, where
$\mathbb{V}(d)$ is the solution of the following
maximization problem:
\begin{equation}\label{V(d)}
\mathbb{V}(d):=\sup\bigl\{u(0)^2:\
u\in l^2(\mathbb{Z}), \ \|u\|^2=1,\
\|\D u\|^2=d\bigr\},
\end{equation}
where $0<d<4$.

Its solution is found in terms of the Green's
function of the corresponding second-order self-adjoint positive operator,
see \cite{IZ}, \cite{Zelik}.
The spectrum of the operator $-\Delta=\D^*\D$
is the closed interval $[0,4]$,
and we set
\begin{equation}\label{A(lam)}
\mathbb{A}(\lambda)=
\left\{
  \begin{array}{ll}
\phantom{-} \D^*\D+\lambda, & \hbox{for $\lambda>0$;} \\
-\D^*\D-\lambda, & \hbox{for $\lambda<-4$.}
  \end{array}
\right.
\end{equation}
Then $\mathbb{A}(\lambda)$ is positive definite
$$
(\mathbb{A}(\lambda)u,u)=
\left\{
  \begin{array}{ll}
\phantom{-} \|\D u\|^2+\lambda\|u\|^2>\lambda\|u\|^2, & \hbox{for $\lambda>0$;} \\
-\|\D u\|^2-\lambda\|u\|^2>(-\lambda-4)\|u\|^2, & \hbox{for $\lambda<-4$.}
  \end{array}
\right.
$$
Let $\delta$ be the delta-sequence: $\delta(0)=1$,
$\delta(n)=0$ for $n\ne0$, and let $G_\lambda=\{G_\lambda(n)\}_{n=-\infty}^\infty\in l^2(\mathbb{Z})$ be the Green's function of operator \eqref{A(lam)}, that is, the solution of the equation:
\begin{equation}\label{Green}
\mathbb{A}(\lambda)G_\lambda=\delta.
\end{equation}

Then we have by the Cauchy--Schwartz inequality
\begin{equation}\label{additive}
\aligned
u(0)^2=(\delta,u)^2=(\mathbb{A}(\lambda)G_\lambda,u)^2=
\\=
(\mathbb{A}(\lambda)^{1/2}G_\lambda,\mathbb{A}(\lambda)^{1/2}u)^2\le
(\mathbb{A}(\lambda)G_\lambda,G_\lambda)
(\mathbb{A}(\lambda)u,u)=\\=G_\lambda(0)
(\mathbb{A}(\lambda)u,u).
\endaligned
\end{equation}
Furthermore, this inequality is sharp and  turns into equality
if and only if $u=\mathrm{const}\cdot G_\lambda$.

We find in Lemma~\ref{L:explicit}  explicit formulas for
$\mathbb{V}(d)$ and $G_\lambda(n)$. Nevertheless,
we now independently prove the following two
symmetry properties of $\mathbb{V}(d)$ and $G_\lambda(n)$,
especially since their counterparts will be useful
in the two-dimensional case below.
\begin{proposition}\label{P:symm}
For $d\in(0,4)$
\begin{equation}\label{Vsymm}
\mathbb{V}(d)=\mathbb{V}(4-d).
\end{equation}
For $\lambda>0$ and $n\in\mathbb{Z}$
\begin{equation}\label{Gsymm}
0<G_\lambda(n)=(-1)^{|n|}G_{-4-\lambda}(n).
\end{equation}
\end{proposition}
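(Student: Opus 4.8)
The plan is to establish the two identities in tandem, deriving the symmetry of $\mathbb{V}$ from the structural symmetry of the Green's function under the spectral reflection $\lambda \mapsto -4-\lambda$. The key observation is that the substitution $u(n) \mapsto v(n) := (-1)^n u(n)$ is a unitary involution on $l^2(\mathbb{Z})$ that intertwines the operator $\D^*\D$ with $4 - \D^*\D$. Indeed, a direct computation gives $(\D^*\D v)(n) = -(v(n+1) - 2v(n) + v(n-1)) = (-1)^n\bigl(u(n+1) + 2u(n) + u(n-1)\bigr) = (-1)^n\bigl(4u(n) - (\D^*\D u)(n)\bigr)$, so that $\D^*\D v = (-1)^{\cdot}\,(4 - \D^*\D)u$ pointwise. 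Since this map preserves $\|u\|^2 = 1$ and sends $\|\D u\|^2 = (\D^*\D u, u)$ to $(4 - \D^*\D)u, u) = 4 - \|\D u\|^2$, the feasible set in \eqref{V(d)} for the value $d$ is carried bijectively onto the feasible set for $4 - d$, while the objective $u(0)^2 = v(0)^2$ is unchanged. This immediately yields \eqref{Vsymm}.

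For \eqref{Gsymm}, I would apply the same intertwining to the defining equation \eqref{Green}. Fix $\lambda > 0$ and set $H(n) := (-1)^{|n|} G_\lambda(n) = (-1)^n G_\lambda(n)$ (the two agree since $(-1)^{|n|} = (-1)^n$). Applying $\mathbb{A}(-4-\lambda) = -\D^*\D - (-4-\lambda) = 4 - \D^*\D + \lambda$ to $H$ and using the intertwining relation, one gets $\mathbb{A}(-4-\lambda)H(n) = (-1)^n\bigl(\D^*\D + \lambda\bigr)G_\lambda(n) = (-1)^n\,\delta(n) = \delta(n)$, since $\delta$ is supported at $n=0$ where the sign is $+1$. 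Thus $H$ solves $\mathbb{A}(-4-\lambda)H = \delta$; as $\mathbb{A}(-4-\lambda)$ is positive definite (hence injective on $l^2$), $H = G_{-4-\lambda}$, which is exactly $G_\lambda(n) = (-1)^{|n|}G_{-4-\lambda}(n)$.

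It remains to prove positivity $G_\lambda(n) > 0$ for $\lambda > 0$. I would use the Neumann series: since the spectrum of $\D^*\D$ lies in $[0,4]$, for $\lambda > 4$ one has the norm-convergent expansion $(\D^*\D + \lambda)^{-1} = \lambda^{-1}\sum_{k\ge 0}(-\D^*\D/\lambda)^k$, and each entry of $-\D^*\D$ off the diagonal is nonnegative while a shift of origin makes the whole resolvent have nonnegative (indeed positive) entries; strict positivity follows because $\D^*\D$ is irreducible as a tridiagonal matrix with nonzero off-diagonal entries, so some power connects $0$ to any given $n$. For $0 < \lambda \le 4$ one extends by analytic continuation in $\lambda$, or more cleanly invokes the explicit formula $G_\lambda(n) = \frac{1}{2\pi}\int_0^{2\pi} \frac{\cos nx}{\lambda + 4\sin^2(x/2)}\,dx$ together with a classical evaluation (e.g.\ via residues, writing $4\sin^2(x/2) = 2 - 2\cos x$ and using $z = e^{ix}$) that produces $G_\lambda(n) = \frac{\mu^{|n|}}{\sqrt{\lambda(\lambda+4)}}$ with $\mu = \mu(\lambda) \in (0,1)$ the root of $\mu^2 - (2+\lambda)\mu + 1 = 0$; positivity is then manifest. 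The main obstacle is this positivity claim: the symmetry identities are essentially formal once the sign-flip intertwining is in hand, but showing $G_\lambda(n) > 0$ for \emph{all} $\lambda > 0$ (not just large $\lambda$) requires either the explicit Green's function or a maximum-principle / irreducibility argument, and one must be slightly careful near the edge $\lambda \to 0^+$ of the spectrum where $G_\lambda$ blows up but stays positive.
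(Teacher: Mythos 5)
Your treatment of the two symmetry identities is essentially the paper's own argument: the sign-flip map $u\mapsto\{(-1)^{n}u(n)\}$ is exactly the operator $T$ used there, together with the intertwining $\D^*\D=T(4-\D^*\D)T$ and $T\delta=\delta$; in fact your version of \eqref{Vsymm} is marginally cleaner, since you transport the whole feasible set of \eqref{V(d)} rather than (as the paper does) an extremal element whose existence has not yet been established at that point. Where you genuinely diverge is the positivity claim $G_\lambda(n)>0$. The paper proves it by a discrete maximum principle applied to the componentwise recurrence $-G_\lambda(n+1)+2G_\lambda(n)-G_\lambda(n-1)+\lambda G_\lambda(n)=\delta(n)$: a strictly negative global minimum at $n\ne 0$ is incompatible with the equation, and a zero value propagates to $G_\lambda(0)=0$, a contradiction. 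Your second route (residue evaluation of the Fourier integral giving $G_\lambda(n)=\mu(\lambda)^{|n|}/\sqrt{\lambda(\lambda+4)}$ with $0<\mu<1$) is perfectly valid and is precisely the explicit formula the paper derives later in Lemma~\ref{L:explicit}; it just front-loads that computation. Your first route, however, is shaky as written: expanding $(\D^*\D+\lambda)^{-1}$ in powers of $-\D^*\D/\lambda$ does not give sign information (the diagonal entries of $-\D^*\D$ are $-2$), and restricting to $\lambda>4$ and then invoking ``analytic continuation'' does not work, since positivity is not a property that analytic continuation preserves. The correct form of that idea is to absorb the diagonal: write $\D^*\D+\lambda=(\lambda+2)I-S$ with $S$ the (entrywise nonnegative, irreducible) nearest-neighbour shift sum, $\|S\|=2$, so that $(\D^*\D+\lambda)^{-1}=\sum_{k\ge0}S^{k}/(\lambda+2)^{k+1}$ converges and has strictly positive entries for every $\lambda>0$, with no continuation step needed. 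With that repair (or simply using your explicit-formula variant), the proposal is complete; the paper's maximum-principle argument buys the same conclusion without any explicit computation, which is why the authors can state the proposition ``independently'' of Lemma~\ref{L:explicit}.
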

\begin{proof} For $u\in l^2(\mathbb{Z})$ we define the
orthogonal operator $T$
$$
Tu=u^\star:=\{(-1)^{|n|}u(n)\}_{n=-\infty}^\infty.
$$
Then clearly $\|u\|^2=\|u^\star\|^2$ and, in addition,
\begin{equation}\label{D*-D}
\|\mathrm{D}u^\star\|^2=4\|u\|^2-\|\mathrm{D}u\|^2.
\end{equation}
Therefore if for a fixed $d$ and $u=u_d$ we have
$$
\mathbb{V}(d)=u(0)^2, \quad \|u\|^2=1\quad
\text{and}\quad \|\mathrm{D}u\|^2=d,
$$
then for $u^*=Tu$ it holds
$$
u^\star(0)^2=u(0)^2=\mathbb{V}(d), \quad \|u^\star\|^2=1\quad
\text{and}\quad \|\mathrm{D}u^\star\|^2=4-d,
$$
which gives that $\mathbb{V}(4-d)\ge\mathbb{V}(d)$. However, the strict
inequality here is impossible, since otherwise by repeating this
procedure we would have found that $\mathbb{V}(d)>\mathbb{V}(d)$.
This proves~\eqref{Vsymm}.

Turning to \eqref{Gsymm} we note that $T^{-1}=T^*=T$ and
we see from~\eqref{D*-D} that
$$
(\mathrm{D}^*\mathrm{D}u,u)=(T(-\mathrm{D}^*\mathrm{D}+4)Tu,u),
$$
and, consequently,
$$
\mathrm{D}^*\mathrm{D}=T(-\mathrm{D}^*\mathrm{D}+4)T.
$$
Therefore, if for $\lambda>0$, $G_\lambda$ solves
$$
\mathbb{A}(\lambda)G_\lambda=(\mathrm{D}^*\mathrm{D}+\lambda)G_\lambda=
\delta,
$$
then
$$
T(-\mathrm{D}^*\mathrm{D}+4+\lambda) TG_\lambda=\delta.
$$
Since $T^{-1}\delta=\delta$, using  definition~\eqref{A(lam)}
we obtain
$$
(-\mathrm{D}^*\mathrm{D}+4+\lambda)TG_\lambda=
\mathbb{A}(-4-\lambda)TG_\lambda=\delta,
$$
which gives
$$
TG_\lambda=G_{-4-\lambda},
$$
and proves the equality in~\eqref{Gsymm}.

It remains to  show that for $\lambda>0$  $G_\lambda(n)>0$ for all $n$.
Since $\mathbb{A}(\lambda)$ is positive definite,
it follows that $G_\lambda(0)=(\mathbb{A}(\lambda)G_\lambda,G_\lambda)>0$.
 We use the maximum principle and
suppose that for some $n\ne1$, $G_\lambda(n)<0$. Since
$G_\lambda(n)\to0$ as $n\to\infty$ and $G_\lambda(0)>0$, it follows that
$G_\lambda$ attains
a global strictly negative minimum at some point $n>1$
(the case $n<-1$ is similar).
Then the sum of the first three terms in~\eqref{recur}
is non-positive and  the fourth term is strictly negative,
which contradicts  $\delta(n)=0$. This proves that
$G_\lambda(n)\ge0$ for all $n$. Finally, to prove strict
positivity, we suppose that $G_\lambda(n)=0$ for some  $n>1$.
Then we see from~\eqref{recur} that
$G_\lambda(n-1)+G_\lambda(n+1)=0$, and what has already
 been proved
gives $G_\lambda(n-1)=G_\lambda(n+1)=0$. Repeating this
we reach $n=1$ giving that $G_\lambda(0)=0$, which is
a contradiction.
\end{proof}

To denote the three norms of $G_\lambda$ we  set
\begin{equation}\label{fgh1}
f(\lambda):=G_\lambda(0),\quad
g(\lambda):=\|G_\lambda\|^2,\quad
h(\lambda):=\|\D G_\lambda\|^2.
\end{equation}

\begin{lemma}\label{L:relations}
The functions $f$, $g$ and $h$ satisfy
\begin{equation}\label{fgh}
g(\lambda)=-\mathrm{sign}(\lambda)f'(\lambda),\ \
 h(\lambda)=\mathrm{sign}(\lambda)(f(\lambda)+\lambda f'(\lambda)).
\end{equation}
\end{lemma}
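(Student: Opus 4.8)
The plan is to differentiate the defining relation $\mathbb{A}(\lambda)G_\lambda=\delta$ in $\lambda$ and then extract the three identities by pairing against the appropriate sequences. Concretely, assume $\lambda>0$ (the case $\lambda<-4$ is symmetric, or can be handled by the same computation with the sign of $\mathbb{A}$ reversed, whence the $\mathrm{sign}(\lambda)$ factors). Since $\mathbb{A}(\lambda)=\mathrm{D}^*\mathrm{D}+\lambda$ depends analytically on $\lambda$ and $G_\lambda\in l^2(\mathbb{Z})$ depends differentiably on $\lambda$ (the operator is boundedly invertible, uniformly on compact subsets of $(0,\infty)$), we may differentiate $(\mathrm{D}^*\mathrm{D}+\lambda)G_\lambda=\delta$ to obtain
\[
(\mathrm{D}^*\mathrm{D}+\lambda)G_\lambda'+G_\lambda=0,
\qquad\text{i.e.}\qquad
\mathbb{A}(\lambda)G_\lambda'=-G_\lambda,
\]
where $G_\lambda'$ denotes the termwise $\lambda$-derivative.

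For the first identity, pair this with $G_\lambda$: using self-adjointness of $\mathbb{A}(\lambda)$ and the original equation,
\[
-\|G_\lambda\|^2=(\mathbb{A}(\lambda)G_\lambda',G_\lambda)=(G_\lambda',\mathbb{A}(\lambda)G_\lambda)=(G_\lambda',\delta)=G_\lambda'(0)=f'(\lambda),
\]
which gives $g(\lambda)=-f'(\lambda)$. Alternatively, and perhaps cleaner to present, differentiate $f(\lambda)=G_\lambda(0)=(\mathbb{A}(\lambda)G_\lambda,G_\lambda)=\|\mathrm{D}G_\lambda\|^2+\lambda\|G_\lambda\|^2$ directly: the cross terms combine via the equation $\mathbb{A}(\lambda)G_\lambda=\delta$ since $2(\mathbb{A}(\lambda)G_\lambda,G_\lambda')=2(\delta,G_\lambda')=2G_\lambda'(0)$, and one is left with $f'(\lambda)=\|G_\lambda\|^2\cdot(\text{sign})+2G_\lambda'(0)-2G_\lambda'(0)$... one must be careful here, so I would favor the first route. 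For the second identity, observe
\[
h(\lambda)=\|\mathrm{D}G_\lambda\|^2=(\mathrm{D}^*\mathrm{D}G_\lambda,G_\lambda)=((\mathbb{A}(\lambda)-\lambda)G_\lambda,G_\lambda)=(\delta,G_\lambda)-\lambda\|G_\lambda\|^2=f(\lambda)-\lambda g(\lambda),
\]
and substituting $g(\lambda)=-f'(\lambda)$ yields $h(\lambda)=f(\lambda)+\lambda f'(\lambda)$, as claimed (with the overall $\mathrm{sign}(\lambda)$ bookkeeping restored for $\lambda<-4$).

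The only genuine subtlety — and the step I would treat with a line of justification rather than wave away — is the differentiability of $\lambda\mapsto G_\lambda$ as an $l^2(\mathbb{Z})$-valued map, so that $G_\lambda'$ exists and termwise differentiation is legitimate; this follows from $\mathbb{A}(\lambda)^{-1}$ being a norm-analytic family of bounded operators on $(0,\infty)$ and $G_\lambda=\mathbb{A}(\lambda)^{-1}\delta$, with $G_\lambda'=-\mathbb{A}(\lambda)^{-1}\delta$... more precisely $G_\lambda'=-\mathbb{A}(\lambda)^{-2}\delta$, which lies in $l^2$ since $\delta\in l^2$ and $\mathbb{A}(\lambda)^{-1}$ is bounded. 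Everything else is manipulation of the identity $\mathbb{A}(\lambda)G_\lambda=\delta$ together with self-adjointness. A slick alternative avoiding any differentiability discussion is to use the explicit formula for $G_\lambda(n)$ from Lemma~\ref{L:explicit} and simply verify \eqref{fgh} by direct computation, but the operator-theoretic argument above is shorter and more transparent, and it is the one I would write.
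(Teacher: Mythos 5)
Your proposal is correct and follows essentially the same route as the paper: both differentiate the Green's equation to get $\mathbb{A}(\lambda)G'_\lambda=-G_\lambda$, pair with $G_\lambda$ using self-adjointness, and combine with the quadratic-form identity $f(\lambda)=h(\lambda)+\lambda g(\lambda)$ coming from $(\mathbb{A}(\lambda)G_\lambda,G_\lambda)=(\delta,G_\lambda)$; your ordering (first $g=-f'$, then $h=f-\lambda g$) versus the paper's (first $f=h+\lambda g$, then differentiate) is only a cosmetic rearrangement, and your remarks on differentiability of $\lambda\mapsto G_\lambda$ and the sign bookkeeping for $\lambda<-4$ match the paper's level of rigor.
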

\begin{proof} Let $\lambda>0$.
Then $\mathbb{A}(\lambda)=\D^*\D+\lambda$. Taking the scalar product of \eqref{Green} with $G_\lambda$
we have
\begin{equation}\label{4.green}
f(\lambda)=G_\lambda(0)=\|\D G_\lambda\|^2+
\lambda\|G_\lambda\|^2=h(\lambda)+\lambda g(\lambda).
\end{equation}
Differentiating this formula with
respect to $\lambda$  we obtain
\begin{multline}\label{4.diff}
f'(\lambda)=2(\mathbb{A}(\lambda)G'_\lambda,G_\lambda)+
g(\lambda)=\\=
-2(G_\lambda,G_\lambda)+g(\lambda)=-g(\lambda),
\end{multline}
where we used that $G_\lambda+\mathbb{A}(\lambda)G'_\lambda=0$,
which, in turn, follows from~\eqref{Green}. The case $\lambda<-4$
is treated similarly taking into account that
now $\mathbb{A}(\lambda)=-\D^*\D-\lambda$.
\end{proof}
\begin{corollary}\label{C:d}
The function $d(\lambda)$ defined as follows
\begin{equation}\label{d(lambda)}
d(\lambda):=\frac{\|\D G_\lambda\|^2}{\|G_\lambda\|^2}
=\frac{h(\lambda)}{g(\lambda)}
\end{equation}
satisfies the functional equation
\begin{equation}\label{d(4-lambda)}
d(-4-\lambda)=4-d(\lambda).
\end{equation}
\end{corollary}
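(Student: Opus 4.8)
The plan is to read off the functional equation~\eqref{d(4-lambda)} directly from the symmetry of the Green's function already recorded in Proposition~\ref{P:symm}. Recall that $TG_\lambda=G_{-4-\lambda}$ for $\lambda>0$, where $Tu=\{(-1)^{|n|}u(n)\}_{n=-\infty}^\infty$, and that since $T^{-1}=T$ (so $T^2=\mathrm{Id}$) the same relation reads $TG_\mu=G_{-4-\mu}$ for $\mu<-4$ as well. Hence it suffices to argue for one sign of $\lambda$, say $\lambda>0$, and the case $\lambda<-4$ follows upon replacing $\lambda$ by $-4-\lambda$.

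First I would compute the effect of $T$ on the two norms defining $d$. Since $T$ is orthogonal, $\|G_{-4-\lambda}\|^2=\|TG_\lambda\|^2=\|G_\lambda\|^2$, i.e. $g(-4-\lambda)=g(\lambda)$. For the difference norm I would invoke the identity~\eqref{D*-D}, applied to $G_\lambda$, which gives $\|\mathrm{D}(TG_\lambda)\|^2=4\|G_\lambda\|^2-\|\mathrm{D}G_\lambda\|^2$, that is, $h(-4-\lambda)=4g(\lambda)-h(\lambda)$. Dividing these two identities then yields
\[
d(-4-\lambda)=\frac{h(-4-\lambda)}{g(-4-\lambda)}=\frac{4g(\lambda)-h(\lambda)}{g(\lambda)}=4-d(\lambda),
\]
which is precisely~\eqref{d(4-lambda)}.

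There is essentially no obstacle here; the only point requiring a moment's care is the bookkeeping with the range of $\lambda$, since the map $\lambda\mapsto-4-\lambda$ interchanges the two regimes $(0,\infty)$ and $(-\infty,-4)$ appearing in the definition~\eqref{A(lam)} of $\mathbb{A}(\lambda)$, and this is handled by the involution property of $T$ noted above. Alternatively, one could bypass Proposition~\ref{P:symm} and derive the same conclusion from Lemma~\ref{L:relations}: the $n=0$ case of~\eqref{Gsymm} gives $f(\lambda)=G_\lambda(0)=G_{-4-\lambda}(0)=f(-4-\lambda)$, hence $f'(-4-\lambda)=-f'(\lambda)$, and substituting into the formulas~\eqref{fgh} for $g$ and $h$ (and carefully tracking the factors $\mathrm{sign}(\lambda)$, which flips under $\lambda\mapsto-4-\lambda$) reproduces $d(\lambda)+d(-4-\lambda)=4$ directly.
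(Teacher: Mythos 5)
Your main argument is correct, and it takes a genuinely more direct route than the paper. The paper proves the corollary by using only the scalar consequence $f(-4-\lambda)=f(\lambda)$ of \eqref{Gsymm} (i.e.\ the value at $n=0$) together with the derivative relations of Lemma~\ref{L:relations}, expressing $d=h/g$ through $f$ and $f'$ and tracking the $\mathrm{sign}(\lambda)$ factors --- which is exactly the alternative you sketch in your final sentences. Your primary proof instead exploits the full sequence identity $TG_\lambda=G_{-4-\lambda}$, the orthogonality of $T$, and the identity \eqref{D*-D}, which give $g(-4-\lambda)=g(\lambda)$ and $h(-4-\lambda)=4g(\lambda)-h(\lambda)$ outright, with no differentiation in $\lambda$ and no sign bookkeeping; your handling of the two regimes via the involution property of $T$ is also correct (and is anyway not essential, since the asserted functional equation is symmetric under $\lambda\mapsto-4-\lambda$). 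What your route buys is a cleaner, purely algebraic argument that transfers verbatim to the two-dimensional case (with $4$ replaced by $8$), cf.\ Proposition~\ref{P:symm2}. What the paper's route buys is that it needs only the diagonal value $f(\lambda)=G_\lambda(0)$ and its $\lambda$-derivative --- precisely the quantities the general method computes explicitly in higher dimensions --- so it stays within the $f$, $f'$, $\mathrm{sign}(\lambda)$ formalism used throughout the paper, whereas your argument uses the pointwise symmetry of the entire Green's function, a slightly stronger (though here fully available) input.
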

\begin{proof} It follows from \eqref{Gsymm} and \eqref{fgh1} that
$$
f(-4-\lambda)=f(\lambda).
$$
Hence, $f'(\lambda)=-f'(-4-\lambda)$ and we obtain from~\eqref{fgh}
$$
\aligned
d(-4-\lambda)=\frac{h(-4-\lambda)}{g(-4-\lambda)}=
\frac{f(-4-\lambda)+(-4-\lambda)f'(-4-\lambda)}{-f'(-4-\lambda)}=\\=
\frac{f(\lambda)+(4+\lambda)f'(\lambda)}{f'(\lambda)}=
\frac{f(\lambda)+\lambda f'(\lambda)}{f'(\lambda)}+4=
4-d(\lambda).
\endaligned
$$
\end{proof}

Next, we find explicit formulas for $f$, $g$ and $h$.
\begin{lemma}\label{L:explicit}
The Green's function $G_\lambda$ belongs
to $l^2(\mathbb{Z})$, and
both for $\lambda\in(-\infty,-4)$ and
$\lambda\in(0,\infty)$
\begin{equation}\label{expl}
f(\lambda)=\frac1{\sqrt{\lambda(\lambda+4)}},\
g(\lambda)=\frac{\lambda+2}{(\lambda+4)\sqrt{\lambda^3(\lambda+4)}},\
h(\lambda)=\frac2{\sqrt{\lambda(\lambda+4)^3}}\,.
\end{equation}
Furthermore,  the elements $G_\lambda(n)$  can be found explicitly: for $\lambda>0$
\begin{equation}\label{explG(n)}
G_\lambda(n)=\frac1\pi\ \int_{0}^\pi\frac{\cos nx\,dx}{\lambda+4\sin^2\frac x2}=
\frac1{\sqrt{\lambda(\lambda+4)}}
\left(\frac{\lambda+2-\sqrt{\lambda(\lambda+4)}}2\right)^{|n|}\,,
\end{equation}
for $\lambda<-4$
\begin{equation}\label{explG(n)-}
G_\lambda(n)=-\frac1\pi\ \int_{0}^\pi\frac{\cos nx\,dx}{\lambda+4\sin^2\frac x2}=
\frac1{\sqrt{\lambda(\lambda+4)}}
\left(\frac{\lambda+2+\sqrt{\lambda(\lambda+4)}}2\right)^{|n|}\,.
\end{equation}

\end{lemma}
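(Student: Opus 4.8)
The plan is to compute $G_\lambda$ explicitly via the discrete Fourier transform, read off $f(\lambda)=G_\lambda(0)$, and then obtain $g$ and $h$ as an immediate consequence of Lemma~\ref{L:relations}. Membership $G_\lambda\in l^2(\mathbb{Z})$ is clear at the outset: since $\mathbb{A}(\lambda)$ is bounded and positive definite with spectrum bounded away from zero (as recorded just before~\eqref{Green}), it is boundedly invertible on $l^2(\mathbb{Z})$, so $G_\lambda=\mathbb{A}(\lambda)^{-1}\delta$ is a well-defined $l^2$-element. Passing to Fourier series $u(n)=\frac{1}{2\pi}\int_0^{2\pi}\hat u(x)e^{inx}\,dx$, the operator $\D^*\D$ becomes multiplication by its symbol $|e^{ix}-1|^2=2-2\cos x=4\sin^2\frac x2$; hence \eqref{Green} gives $\hat G_\lambda(x)=\bigl(\lambda+4\sin^2\frac x2\bigr)^{-1}$ for $\lambda>0$, and $\hat G_\lambda(x)=-\bigl(\lambda+4\sin^2\frac x2\bigr)^{-1}$ for $\lambda<-4$ (where $\mathbb{A}(\lambda)=-\D^*\D-\lambda$ has positive symbol $-(\lambda+4\sin^2\frac x2)$). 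Inverting the transform and using evenness of the symbol in $x$ yields the integral representations in \eqref{explG(n)} and \eqref{explG(n)-}.

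To put these integrals in closed form, substitute $z=e^{ix}$ and write $\lambda+4\sin^2\frac x2=\lambda+2-z-z^{-1}$, turning $G_\lambda(n)$ (for $\lambda>0$, $n\ge0$) into $-\frac{1}{2\pi i}\oint_{|z|=1}\frac{z^n\,dz}{z^2-(\lambda+2)z+1}$. The denominator factors as $(z-z_+)(z-z_-)$ with $z_\pm=\frac{\lambda+2\pm\sqrt{\lambda(\lambda+4)}}{2}$; since $z_+z_-=1$ and $\lambda>0$ one checks $0<z_-<1<z_+$, so only the simple pole at $z_-$ lies inside the contour, and the residue theorem gives $G_\lambda(n)=\frac{z_-^{\,n}}{z_+-z_-}=\frac{z_-^{\,n}}{\sqrt{\lambda(\lambda+4)}}$. (Equivalently, one may solve the recurrence \eqref{recur}: for $n\ne0$ it reads $G_\lambda(n+1)-(\lambda+2)G_\lambda(n)+G_\lambda(n-1)=0$ with characteristic roots $z_\pm$; the $l^2$-solution, even in $n$, must be $C z_-^{|n|}$, and evaluating \eqref{recur} at $n=0$ fixes $C(\lambda+2-2z_-)=1$, i.e. $C=1/\sqrt{\lambda(\lambda+4)}$.) By evenness of $G_\lambda$ this is precisely \eqref{explG(n)}, and in particular $f(\lambda)=G_\lambda(0)=1/\sqrt{\lambda(\lambda+4)}$. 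Formula \eqref{explG(n)-} for $\lambda<-4$ then follows either by repeating the computation with the opposite sign, or, faster, from the symmetry $G_\lambda(n)=(-1)^{|n|}G_{-4-\lambda}(n)$ of Proposition~\ref{P:symm} together with the identity $(-4-\lambda)(-\lambda)=\lambda(\lambda+4)$.

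Finally, with $f(\lambda)=(\lambda^2+4\lambda)^{-1/2}$ in hand, the expressions for $g$ and $h$ are a one-line consequence of Lemma~\ref{L:relations}: differentiating gives $f'(\lambda)=-(\lambda+2)(\lambda(\lambda+4))^{-3/2}$, whence for $\lambda>0$ one has $g(\lambda)=-f'(\lambda)=\frac{\lambda+2}{(\lambda+4)\sqrt{\lambda^3(\lambda+4)}}$ and $h(\lambda)=f(\lambda)+\lambda f'(\lambda)=\frac{\lambda(\lambda+4)-\lambda(\lambda+2)}{(\lambda(\lambda+4))^{3/2}}=\frac{2}{\sqrt{\lambda(\lambda+4)^3}}$, and for $\lambda<-4$ the signs in \eqref{fgh} reproduce the same expressions, consistent with the invariance of $f$ under $\lambda\mapsto-4-\lambda$. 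As a cross-check one can also compute $g$ and $h$ directly from the geometric series $\sum_n z_-^{2|n|}$ and $\sum_n(z_-^{|n+1|}-z_-^{|n|})^2$, using $z_-^2-(\lambda+2)z_-+1=0$ to simplify. The only genuinely delicate step is the contour integral — correctly identifying which root lies inside the unit disk and reducing the case $n<0$ to $n\ge0$ by symmetry; the remainder is routine algebra.
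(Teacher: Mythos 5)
Your argument is correct, and its backbone is the same as the paper's: pass to the discrete Fourier transform to identify the symbol $\lambda+4\sin^2\frac x2$ (with the sign flip for $\lambda<-4$), conclude $G_\lambda\in l^2(\mathbb{Z})$, and then obtain $g$ and $h$ from $f$ via Lemma~\ref{L:relations}. The only genuine divergence is in how the closed form is produced: you evaluate the Fourier inversion integral by residues, locating the root $z_-=\tfrac{\lambda+2-\sqrt{\lambda(\lambda+4)}}2$ inside the unit circle, which yields \eqref{explG(n)} and $f(\lambda)=G_\lambda(0)$ in one stroke, and you then transfer the case $\lambda<-4$ by the symmetry \eqref{Gsymm} of Proposition~\ref{P:symm}; the paper instead first computes $f(\lambda)$ directly from the table integral \eqref{integral} and afterwards derives \eqref{explG(n)}, \eqref{explG(n)-} by solving the linear recurrence \eqref{recur} with the appropriate root of the characteristic equation (your parenthetical recurrence argument is exactly this). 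The residue route is slightly more economical, since $f$ is not needed separately and the $\lambda<-4$ case comes for free from the symmetry; the paper's route stays entirely real-variable and makes the $l^2$-selection of the decaying root explicit. Your verification of \eqref{expl} from \eqref{fgh}, including the sign bookkeeping for $\lambda<-4$, matches the paper's intent, so no gap remains.
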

\begin{proof} In view of \eqref{fgh}, for the proof
of~\eqref{expl} it suffices to find only $f(\lambda)=G_\lambda(0)$. We consider two cases:
$\lambda>0$ and $\lambda<-4$.
For $\lambda>0$ the sequence
$G_\lambda$ solves \eqref{Green}, which takes the
form $(\D^*\D+\lambda)G_\lambda=\delta$, or
component-wise
\begin{equation}\label{recur}
-G_\lambda(n+1)+2G_\lambda(n)-G_\lambda(n-1)+
\lambda G_\lambda(n)=\delta(n).
\end{equation}
We multiply each equation by $e^{inx}$ and sum the results from $n=-\infty$ to $\infty$. Setting
$$
\widehat{g}_\lambda(x):=\sum_{n=-\infty}^\infty
G_\lambda(n)e^{inx},
$$
we obtain
$$
-\sum_{n=-\infty}^\infty
e^{inx}G_\lambda(n+1)+(2+\lambda)\sum_{n=-\infty}^\infty
e^{inx}G_\lambda(n)-\sum_{n=-\infty}^\infty
e^{inx}G_\lambda(n-1)=1
$$
or
$$
\aligned
1=\widehat{g}_\lambda(x)(\lambda-e^{-ix}+2-e^{ix}))=\\=
\widehat{g}_\lambda(x)\bigl(\lambda-(e^{ix/2}-e^{-ix/2})^2\bigr)=
\widehat{g}_\lambda(x)\left(\lambda+4\sin^2\frac x2\right),
\endaligned
$$
which gives
$ \widehat{g}_\lambda(x)=1/(\lambda + 4\sin^2\frac x2)$.

In the case when $\lambda<-4$ equation \eqref{Green} becomes $(\D^*\D+\lambda)G_\lambda=-\delta$ and we merely have to change the sign of $\widehat{g}_\lambda(x)$ and we obtain:
\begin{equation}\label{ghat}
\widehat{g}_\lambda(x)=\left\{
\begin{array}{ll}
\phantom{-}\frac1{\lambda + 4\sin^2\frac x2}, & \lambda>0; \\
-\frac1{\lambda + 4\sin^2\frac x2}                           , & \lambda<-4,
                         \end{array}
                       \right.
\end{equation}
and
\begin{equation}\label{G(n)}
G_\lambda(n)=\frac1{2\pi}\int_{-\pi}^\pi\widehat{g}_\lambda(x)\cos nx\,dx.
\end{equation}
Since $\widehat{g}_\lambda\in L_2(0,2\pi)$, it follows that
$G_\lambda\in l^2(\mathbb{Z})$.

Using the integral
\begin{equation}\label{integral}
\int_{-\pi}^\pi\frac{dx}{b+\sin^2\frac x2}=
\left\{
  \begin{array}{ll}
\phantom{-}\frac{2\pi}{\sqrt{b(b+1)}}, & b>0; \\
-\frac{2\pi}{\sqrt{b(b+1)}}, & b<-1.
  \end{array}
\right.
\end{equation}
we finally obtain both for $\lambda>0$,  and $\lambda<-4$
\begin{equation}\label{G(0)}
f(\lambda)=G_\lambda(0)=\frac1{2\pi}\int_{-\pi}^\pi
\widehat{g}_\lambda(x)dx=
\frac1{\sqrt{\lambda(\lambda+4)}}\,.
\end{equation}

Finally, to obtain the explicit formula \eqref{explG(n)}
(which will not be used below) we observe that  the
equation~\eqref{recur} for positive (and negative) $n$
is a homogeneous linear recurrence relation with constant coefficients.
The characteristic equation is
$$
q^2-(2+\lambda)q+1=0
$$
with  roots
$$
\aligned
q_1(\lambda)&=\frac{\lambda+2-\sqrt{\lambda(\lambda+4)}}2,
\quad 0<q_1(\lambda)<1\ \text{ for}\ \lambda>0,\\
q_2(\lambda)&=\frac{\lambda+2+\sqrt{\lambda(\lambda+4)}}2,
\quad -1<q_2(\lambda)<0\ \text{ for}\ \lambda<-4.
\endaligned
$$
For $\lambda>0$  the general
$l^2$-solution of~\eqref{recur}
is $G_\lambda(n)=c_1(\lambda)q_1(\lambda)^n$ for $n>0$ and
$G_\lambda(n)=c_2(\lambda)q_1(\lambda)^{|n|}$ for $n<0$. Since we already
know that $G_\lambda(n)=G_\lambda(-n)$, it follows that
$c_1(\lambda)=c_2(\lambda)=:a(\lambda)$. Substituting
$G_\lambda(n)=a(\lambda)q_1(\lambda)^{|n|}$ into \eqref{recur} with $n=0$
we obtain $-2a(\lambda)q_1(\lambda)+(2+\lambda)a(\lambda)=1$,
which gives
$$
a(\lambda)=\frac1{2+\lambda-2q_1(\lambda)}=\frac1{\sqrt{\lambda(\lambda+4)}}\,
$$
and proves~\eqref{explG(n)}.
The proof of~\eqref{explG(n)-} in the case $\lambda<-4$ is totally similar, we only have to use the second root
$q_2(\lambda)$ with $|q_2(\lambda)|<1$.

We finally point out that the equality~\eqref{Gsymm} can now be
also verified by a direct calculation: $q_2(-4-\lambda)=-q_1(\lambda)$.
\end{proof}

We can now give the solution to the problem  \eqref{V(d)}.
\begin{theorem}\label{T:V(d)}
For any $0<d<4$ the solution of the maximization problem~\eqref{V(d)} is given by
\begin{equation}\label{Sol_V(d)}
\mathbb{V}(d)=\frac12\sqrt{d(4-d)}.
\end{equation}
The supremum in~\eqref{V(d)} is the maximum that is attained at a unique sequence
$u^*_{\lambda(d)}=\|G_{\lambda(d)}\|^{-1}G_{\lambda(d)}$, where
\begin{equation}\label{lambda(d)}
\lambda(d)=\frac{2d}{{2-d}}
\end{equation}
for $d\ne2$;  for $d=2$, $u^*=\delta$.
\end{theorem}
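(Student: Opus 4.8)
The plan is to optimise the Cauchy--Schwarz bound \eqref{additive} over the spectral parameter $\lambda$, using the explicit value $f(\lambda)=G_\lambda(0)=1/\sqrt{\lambda(\lambda+4)}$ from Lemma~\ref{L:explicit}. Fix $d\in(0,4)$ with $d\ne2$ and let $u$ be admissible: $\|u\|^2=1$, $\|\D u\|^2=d$. For $\lambda>0$ we have $(\mathbb{A}(\lambda)u,u)=\|\D u\|^2+\lambda\|u\|^2=d+\lambda$, while for $\lambda<-4$ we have $(\mathbb{A}(\lambda)u,u)=-\|\D u\|^2-\lambda\|u\|^2=-(d+\lambda)>0$; in either case \eqref{additive} together with \eqref{expl} gives
\[
u(0)^2\le f(\lambda)\,|d+\lambda|=\frac{|d+\lambda|}{\sqrt{\lambda(\lambda+4)}}=:\Phi_d(\lambda).
\]
Hence $\mathbb{V}(d)\le\inf\Phi_d$, the infimum taken over $(0,\infty)\cup(-\infty,-4)$. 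An elementary computation shows that $\Phi_d$ has a unique stationary point on this set, namely $\lambda=\lambda(d)=2d/(2-d)$ (which lies in $(0,\infty)$ for $d<2$ and in $(-\infty,-4)$ for $d>2$), that it is the global minimum, and that $\Phi_d(\lambda(d))=\tfrac12\sqrt{d(4-d)}$; this is the upper bound in \eqref{Sol_V(d)}. For $d<2$ one may instead restrict to $\lambda>0$ and recover $2<d<4$ from the symmetry \eqref{Vsymm}.

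The conceptually clean point is that the stationarity condition for $\Phi_d$ is precisely the admissibility of $G_\lambda$: differentiating $f(\lambda)(d+\lambda)$ and using \eqref{fgh} one finds $f'(\lambda)(d+\lambda)+f(\lambda)=0\iff d+\lambda=-f(\lambda)/f'(\lambda)=f(\lambda)/g(\lambda)=d(\lambda)+\lambda$, i.e. $d(\lambda)=d$, with $d(\lambda)=\|\D G_\lambda\|^2/\|G_\lambda\|^2$ as in \eqref{d(lambda)}. From \eqref{expl} (or directly from \eqref{fgh}) one computes $d(\lambda)=h(\lambda)/g(\lambda)=2\lambda/(\lambda+2)$, whose inverse is exactly $\lambda(d)=2d/(2-d)$, confirming \eqref{lambda(d)}. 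Consequently the normalised Green's function $u^*:=\|G_{\lambda(d)}\|^{-1}G_{\lambda(d)}$ is admissible ($\|u^*\|^2=1$, $\|\D u^*\|^2=d(\lambda(d))=d$), and being a scalar multiple of $G_{\lambda(d)}$ it turns \eqref{additive} into an equality for $\lambda=\lambda(d)$; hence $u^*(0)^2=f(\lambda(d))\,|d+\lambda(d)|=\tfrac12\sqrt{d(4-d)}$, so the upper bound is attained and \eqref{Sol_V(d)} follows.

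For uniqueness, if $v$ is any maximiser then, evaluating \eqref{additive} at $\lambda=\lambda(d)$ where $(\mathbb{A}(\lambda(d))v,v)=|d+\lambda(d)|$ is determined by the constraints alone, we get $v(0)^2\le\mathbb{V}(d)$ with equality; the equality case of Cauchy--Schwarz then forces $v=c\,G_{\lambda(d)}$, and normalisation together with $G_{\lambda(d)}(0)>0$ (Proposition~\ref{P:symm}) gives $v=u^*$ up to an irrelevant overall sign. In the degenerate case $d=2$ the formula $2d/(2-d)$ is undefined; here one argues directly from $v(0)^2\le\|v\|^2=1=\tfrac12\sqrt{2\cdot2}$, with equality only for $v=\pm\delta$, and $\delta$ indeed satisfies $\|\D\delta\|^2=2$; equivalently this is the limit $\lambda\to+\infty$, $G_\lambda/\|G_\lambda\|\to\delta$.

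The only real work is bookkeeping: choosing the correct parameter range $\lambda>0$ versus $\lambda<-4$ according to whether $d<2$ or $d>2$, checking that the stationary point of $\Phi_d$ is the global minimum on $(0,\infty)\cup(-\infty,-4)$, and treating the boundary value $d=2$ separately; once the stationarity condition is recognised as $d(\lambda)=d$, the rest is immediate from the equality case of \eqref{additive} and the explicit formulas of Lemma~\ref{L:explicit}.
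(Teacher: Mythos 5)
Your proposal is correct and follows essentially the same route as the paper: the Cauchy--Schwarz bound \eqref{additive} with the explicit Green's function, the computation $d(\lambda)=h(\lambda)/g(\lambda)=2\lambda/(\lambda+2)$ with inverse $\lambda(d)=2d/(2-d)$, and the equality case of \eqref{additive} for the normalized $G_{\lambda(d)}$. Your additional framing as a minimization of $\Phi_d(\lambda)$ over the resolvent set, together with the explicit treatment of $d=2$ and of uniqueness up to sign, is just a slightly more detailed packaging of the paper's argument.
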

\begin{proof}
It follows from \eqref{additive} that
for any $u\in l^2(\mathbb{Z})$
$$
u(0)^2\le G_\lambda(0)(\mathbb{A}(\lambda)u,u),
$$
and, furthermore,  for
$$
u_\lambda^*:=\frac1{\|G_\lambda\|}\cdot G_\lambda
$$
with $\|u_\lambda^*\|^2=1$ the above inequality
turns into equality.

Next, using \eqref{expl} we find the formula for the function $d(\lambda)$
defined in~\eqref{d(lambda)}
$$
d(\lambda)=\frac{\|\D G_\lambda\|^2}{\|G_\lambda\|^2}
=\frac{h(\lambda)}{g(\lambda)}=\frac{2\lambda}{2+\lambda}\,, \qquad d:\left\{
    \begin{array}{ll}
    (0,\infty)\to(0,2),   \\
     (-\infty,-4)\to(2,4).
    \end{array}
  \right.
$$
The inverse function $\lambda(d)$ is given by~\eqref{lambda(d)} and with this $\lambda(d)$
we have
$$
\frac{\|\D u_{\lambda(d)}^*\|^2}{\|u_{\lambda(d)}^*\|^2}
=\frac{h(\lambda(d))}{g(\lambda(d))}=
\frac{2\lambda(d)}{2+\lambda(d)}=d\,.
$$
Therefore $u_{\lambda(d)}^*$ is the extremal sequence in~\eqref{V(d)} and its solution
is
$$
\mathbb{V}(d)=u_{\lambda(d)}^*(0)^2=
\frac{f(\lambda(d))^2}{g(\lambda(d))}=
\frac12\sqrt{d(4-d)}.
$$
\end{proof}

\begin{remark}\label{R:Vandd}
{\rm
It is worth pointing out that in accordance with
Proposition~\ref{P:symm} and Corollary~\ref{C:d} we directly see
here that
$\mathbb{V}(d)=\mathbb{V}(4-d)$ and $d(-4-\lambda)=4-d(\lambda)$.
For the inverse function $\lambda(d)$ we have the functional equation
$\lambda(4-d)=-4-\lambda(d)$.
}
\end{remark}

\begin{corollary}\label{C:1/2}
For any $u\in l^2(\mathbb{Z})$
inequality~\eqref{1D} holds, the constant~$1$ is sharp
and no extremals exist. The following refined inequality holds:
\begin{equation}\label{Lap1/2}
u(0)^2\le\frac12\sqrt{4-\frac{\|\D u\|^2}{\|u\|^2}}\,
\|u\|\|\D u\|.
\end{equation}
For any $0<d<4$ the inequality saturates for
$u^*_{\lambda(d)}=G_{\lambda(d)}$, where
$\lambda(d)=\frac{2d}{{2-d}}$
(see~\eqref{lambda(d)})
with
$\|\D u^*_{\lambda(d)}\|^2/
\|u^*_{\lambda(d)}\|^2=d$. For $d=2$,
$u^*=\delta$.
\end{corollary}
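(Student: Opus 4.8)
The plan is to obtain Corollary~\ref{C:1/2} directly from Theorem~\ref{T:V(d)} by a homogeneity argument, so that the corollary amounts to a rescaled reformulation of the maximization problem~\eqref{V(d)}.

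First I would check that every nonzero $u\in l^2(\mathbb{Z})$ has $d:=\|\D u\|^2/\|u\|^2\in(0,4)$, so that Theorem~\ref{T:V(d)} applies. If $\D u=0$ then $u$ is constant, hence $u=0$; and if $\|\D u\|^2=4\|u\|^2$ then~\eqref{D*-D} forces $\|\D u^\star\|^2=0$, so $u^\star$ is constant, hence $u^\star=0$ and $u=0$. For nonzero $u$ we may assume $\sup_n u(n)^2=u(0)^2$, and applying Theorem~\ref{T:V(d)} to $u/\|u\|$ gives
\[
u(0)^2\le \mathbb{V}(d)\,\|u\|^2=\tfrac12\sqrt{d(4-d)}\,\|u\|^2 .
\]
Since $\sqrt{d(4-d)}\,\|u\|^2=\sqrt{4-d}\,\|u\|\,\|\D u\|$ with $d=\|\D u\|^2/\|u\|^2$, this is precisely~\eqref{Lap1/2}. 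Inequality~\eqref{1D} then follows because $\tfrac12\sqrt{4-d}\le1$ for $d\ge0$, and it is strict for $u\neq0$ since then $d>0$.

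For the sharpness of~\eqref{1D} and the non-existence of extremals I would use the extremal sequences of Theorem~\ref{T:V(d)}. For $u^*_{\lambda(d)}=G_{\lambda(d)}$ one has $\|\D u^*_{\lambda(d)}\|^2/\|u^*_{\lambda(d)}\|^2=d$ (this was verified in the proof of Theorem~\ref{T:V(d)} through the identity $d(\lambda(d))=d$), so
\[
\frac{u^*_{\lambda(d)}(0)^2}{\|u^*_{\lambda(d)}\|\,\|\D u^*_{\lambda(d)}\|}=\frac{\mathbb{V}(d)}{\sqrt{d}}=\tfrac12\sqrt{4-d},
\]
which tends to $1$ as $d\to0^+$; hence the constant $1$ in~\eqref{1D} cannot be improved, while equality in~\eqref{1D} would force $d=0$, i.e.\ $u=0$, so no extremal exists. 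The displayed computation also shows that~\eqref{Lap1/2}, being homogeneous of degree two on each side, turns into an equality for $u=G_{\lambda(d)}$ (and any nonzero scalar multiple) for every $d\in(0,4)$ with $\lambda(d)=2d/(2-d)$ as in~\eqref{lambda(d)}.

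The only slightly delicate point is $d=2$, where the parametrization $\lambda(d)=2d/(2-d)$ degenerates. Here I would simply verify directly that $u=\delta$ is extremal: $\|\delta\|^2=1$, $\|\D\delta\|^2=2$, so $d=2$, and both sides of~\eqref{Lap1/2} equal $1$; equivalently, from the explicit formula~\eqref{explG(n)} one sees $G_\lambda/\|G_\lambda\|\to\delta$ as $\lambda\to+\infty$, which is the limit $d\to2$ of $\lambda(d)$. I do not anticipate any real difficulty here: the whole statement is a direct translation of Theorem~\ref{T:V(d)}, the only things to be careful about being the exclusion of the endpoints $d=0,4$ for nonzero $u$ and the degenerate value $d=2$ of the parameter.
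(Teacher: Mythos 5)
Your proposal is correct and follows essentially the same route as the paper: deduce \eqref{Lap1/2} from the formula $\mathbb{V}(d)=\tfrac12\sqrt{d(4-d)}$ of Theorem~\ref{T:V(d)} by homogeneity, get \eqref{1D} from $\tfrac12\sqrt{4-d}\le 1$, obtain sharpness of the constant $1$ from the limit $d\to0^+$ along the extremals $G_{\lambda(d)}$, and rule out extremals in \eqref{1D} via the refined inequality. Your extra checks (that $d\in(0,4)$ for nonzero $u$, and the direct verification of the degenerate case $d=2$ with $u=\delta$) are harmless elaborations of details the paper leaves implicit.
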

\begin{proof}
Inequality~\eqref{Lap1/2} follows from
\eqref{Sol_V(d)} by homogeneity.

Since $\mathbb{V}(d)<\sqrt{d}$, we obtain inequality~\eqref{1D},
and since $\mathbb{V}(d)/\sqrt{d}\to1$ as $d\to0$ the constant
$1$ is sharp.
In view of the refined inequality \eqref{Lap1/2} there can be no extremals in
the original inequality~\eqref{1D}.
\end{proof}
We now consider \eqref{1Dtheta} for $\theta\ne 1/2$.
\begin{theorem}\label{T:theta}
Inequality  \eqref{1Dtheta} holds only for $1/2\le\theta\le1$. The sharp constant $\mathrm{K}_1(\theta)$ is
\begin{equation}\label{K(theta)}
\mathrm{K}_1(\theta)=\frac12\left(\frac2\theta\right)^\theta
(2\theta-1)^{\theta-1/2}.
\end{equation}
For each $1/2<\theta\le1$ there exists a unique extremal sequence.
\end{theorem}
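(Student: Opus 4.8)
The plan is to reduce the sharp-constant problem for \eqref{1Dtheta} to the maximization problem \eqref{V(d)}, whose value is already given by \eqref{Sol_V(d)}, and then to carry out an elementary one-variable optimization.

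First I would exploit scaling. Since both sides of \eqref{1Dtheta} are homogeneous of degree $2$ under $u\mapsto cu$, one may restrict attention to $\|u\|^2=1$, so that
$$
\mathrm{K}_1(\theta)=\sup_{\|u\|=1}\frac{u(0)^2}{\|\D u\|^{2(1-\theta)}}
=\sup_{0<d<4}\frac{\mathbb{V}(d)}{d^{1-\theta}},
$$
where the last equality groups the supremum according to the value $d=\|\D u\|^2$; here one uses that $0<\|\D u\|^2<4$ for every $u$ with $\|u\|=1$, $u\ne0$, which follows from the Fourier representation $\|\D u\|^2=\frac1{2\pi}\int_{-\pi}^\pi 4\sin^2\tfrac x2\,|\widehat u(x)|^2\,dx$ since the weight $4\sin^2\tfrac x2$ lies strictly between $0$ and $4$ almost everywhere (cf.\ \eqref{Poincare}). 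Substituting \eqref{Sol_V(d)}, $\mathbb{V}(d)=\tfrac12\sqrt{d(4-d)}$, this becomes
$$
\mathrm{K}_1(\theta)=\tfrac12\sup_{0<d<4}\phi_\theta(d),\qquad
\phi_\theta(d)=d^{\theta-1/2}(4-d)^{1/2}.
$$

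Next I would analyse $\phi_\theta$ by cases in $\theta$. For $\theta<1/2$ the exponent $\theta-\tfrac12$ is negative, so $\phi_\theta(d)\to+\infty$ as $d\to0^+$; hence \eqref{1Dtheta} cannot hold with a finite constant, which gives the restriction $\theta\ge1/2$. For $\theta=1/2$ one has $\phi_{1/2}(d)=(4-d)^{1/2}$, whose supremum over $(0,4)$ equals $2$ and is not attained, so $\mathrm{K}_1(1/2)=1$ with no extremal, in agreement with Corollary~\ref{C:1/2}. For $1/2<\theta\le1$ the function $\phi_\theta$ extends continuously to $[0,4]$, is positive on $(0,4)$, and vanishes at both endpoints, so it attains its maximum at an interior critical point; solving $(\ln\phi_\theta)'(d)=\tfrac{\theta-1/2}{d}-\tfrac1{2(4-d)}=0$ gives the single root $d_*=4-2/\theta\in(0,2]$, and $(\ln\phi_\theta)''(d)=-\tfrac{\theta-1/2}{d^2}-\tfrac1{2(4-d)^2}<0$ shows this critical point is the unique maximizer. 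Inserting $d_*$ and simplifying with $4-d_*=2/\theta$ and $d_*=2(2\theta-1)/\theta$ should collapse $\tfrac12\phi_\theta(d_*)$ precisely to \eqref{K(theta)}.

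Finally, for the extremal when $1/2<\theta\le1$: any maximizing $u$, normalized to $\|u\|=1$, must satisfy $\|\D u\|^2=d_*$, for otherwise $\phi_\theta(\|\D u\|^2)<\phi_\theta(d_*)$ strictly; and then $u$ is forced to be an extremal for $\mathbb{V}(d_*)$, which by Theorem~\ref{T:V(d)} is unique and equals $u^*_{\lambda(d_*)}$ (namely $\delta$ when $d_*=2$, i.e.\ $\theta=1$). Thus the extremal in \eqref{1Dtheta} is unique up to a nonzero scalar factor. I do not anticipate a genuine obstacle here: the argument is a one-variable calculus computation sitting on top of Theorem~\ref{T:V(d)}. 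The only points that need some care are the borderline case $\theta=1/2$, where the supremum is not attained, and verifying that the final algebra indeed reproduces the closed form \eqref{K(theta)}.
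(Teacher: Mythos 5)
Your argument is correct, and it is a genuinely different route from the one the paper uses to prove Theorem~\ref{T:theta} --- although it is precisely the alternative that the authors themselves sketch in Remark~\ref{R:expr_for_V} immediately after the proof. The paper deliberately does \emph{not} use the closed formula \eqref{Sol_V(d)} for $\mathbb{V}(d)$ (except for the asymptotics $\mathbb{V}(d)\sim d^{1/2}$ ruling out $\theta<1/2$, which you also use): instead it inserts the specific choice $\lambda=\frac{\theta}{1-\theta}\frac{\|\D u\|^2}{\|u\|^2}$ into the Green's-function bound \eqref{additive}, arrives at $\mathrm{K}_1(\theta)=\frac{1}{\theta^\theta(1-\theta)^{1-\theta}}\sup_{\lambda>0}\lambda^\theta G_\lambda(0)$ with $G_\lambda(0)=1/\sqrt{\lambda(\lambda+4)}$, maximizes over $\lambda$ at $\lambda_*=(4\theta-2)/(1-\theta)$, and verifies extremality of $G_{\lambda_*}$ by checking via Lemma~\ref{L:relations} that $d(\lambda_*)=\frac{1-\theta}{\theta}\lambda_*$, so both inequalities in the chain \eqref{sup} saturate. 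Your proof instead sits entirely on top of Theorem~\ref{T:V(d)}: by homogeneity $\mathrm{K}_1(\theta)=\sup_{0<d<4}\mathbb{V}(d)/d^{1-\theta}=\frac12\sup_d d^{\theta-1/2}(4-d)^{1/2}$, and the rest is one-variable calculus, with $d_*=(4\theta-2)/\theta$ and the algebra collapsing to \eqref{K(theta)}; existence and uniqueness of the extremal for $1/2<\theta\le1$ then come for free from the attainment and uniqueness statement of Theorem~\ref{T:V(d)}, and the same picture cleanly explains both the failure for $\theta<1/2$ and the non-attainment at $\theta=1/2$. The trade-off is that your route requires the full solution of the constrained problem \eqref{V(d)} (which is only available in closed form in this 1D first-order case), whereas the paper's $\lambda$-parametrization needs only $G_\lambda(0)$ and is exactly the scheme that carries over verbatim to the 2D, higher-dimensional and higher-order settings (Theorems~\ref{T:theta2}, \ref{T:theta3}, \ref{T:theta1n}), where no explicit $\mathbb{V}(d)$ or inverse function $\lambda(d)$ exists. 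Your care about $0<\|\D u\|^2<4\|u\|^2$ and the degenerate endpoint $\theta=1/2$ is appropriate; no gap there.
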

\begin{proof} The proof is similar to the proof of
 Theorem~2.5 in~\cite{IZ} where the classical Sobolev spaces were
considered. For convenience we include some details.

We first observe that inequality~\eqref{1Dtheta} cannot hold for
$\theta<1/2$, since otherwise we would have found that
$\mathbb{V}(d)\le c d^\eta$, $\eta=1-\theta>1/2$, a contradiction
with~\eqref{Sol_V(d)}: $\mathbb{V}(d)\sim d^{1/2}$ as
$d\to0$.

The case $\theta=1/2$ was treated above and we assume in what follows  that
$\theta>1/2$.
We set
\begin{equation}\label{lambda}
\lambda:=\frac{\theta}{1-\theta}
\frac{\|\D u\|^2}{\|u\|^2}.
\end{equation}
Then, using \eqref{additive}, we have
\begin{equation}\label{sup}
\aligned
u(0)^2\le G_\lambda(0)\|u\|^2\left(\frac{\|\D
u\|^2}{\|u\|^2}+ \lambda\right)
= \frac1\theta
G_\lambda(0)\lambda^\theta\lambda^{1-\theta}\|u\|^2=
\\=\frac1\theta\lambda^\theta G_\lambda(0)
\left(\frac{\theta}{1-\theta}\frac{\|\D u\|^2}{\|u\|^2}\right)^{1-\theta}
\|u\|^2=\\= \frac{1}{\theta^\theta(1-\theta)^{1-\theta}}\cdot\lambda^{\theta}
G_\lambda(0)\|u\|^{2\theta}\|\D u\|^{2(1-\theta)}\le\\
\le \frac{1}{\theta^\theta(1-\theta)^{1-\theta}}\cdot
\sup_{\lambda>0}\bigg\{\lambda^{\theta} G_\lambda(0)\bigg\}\,
\|u\|^{2\theta}\|\D u\|^{2(1-\theta)}=\\= \mathrm{K}_1(\theta)\|u\|^{2\theta}\|\D
u\|^{2(1-\theta)}.
\endaligned
\end{equation}

We have taken into account in the last equality that
$$
G_\lambda(0)=f(\lambda)=
\frac1{\sqrt{\lambda(\lambda+4)}}.
$$
Hence, the supremum in the above formula is
a (unique) maximum on $\lambda\in\mathbb{R}^+$ of the function
$$
\lambda^\theta f(\lambda)=\frac{\lambda^{\theta-1/2}}{\sqrt{\lambda+4}}
$$
attained at $\lambda_*=(4\theta-2)/(1-\theta)$,
which gives \eqref{K(theta)}.
To see that the constant $\mathrm{K}_1(\theta)$ is sharp we use that
 $\frac{d}{d\lambda}(\lambda^\theta
f(\lambda))|_{\lambda=\lambda_*}=0$, and
$\lambda_*f'(\lambda_*)+\theta f(\lambda_*)=0$.
In view of \eqref{fgh} this gives
$$
d_*:=d(\lambda_*)=\frac{\|\D G_{\lambda_*}\|^2}{\|G_{\lambda_*}\|^2}
=\frac{h(\lambda_*)}{g(\lambda_*)}=
-\frac{f(\lambda_*)+\lambda_* f'(\lambda_*)}{f'(\lambda_*)}
=\frac{1-\theta}\theta\lambda_*.
$$
Hence \eqref{lambda} is satisfied for $u_*=G_{\lambda_*}$
 the two inequalities in \eqref{sup} become equalities,
and $u_*$ is the unique extremal.
\end{proof}

\begin{figure}[htb]
\centerline{\psfig{file=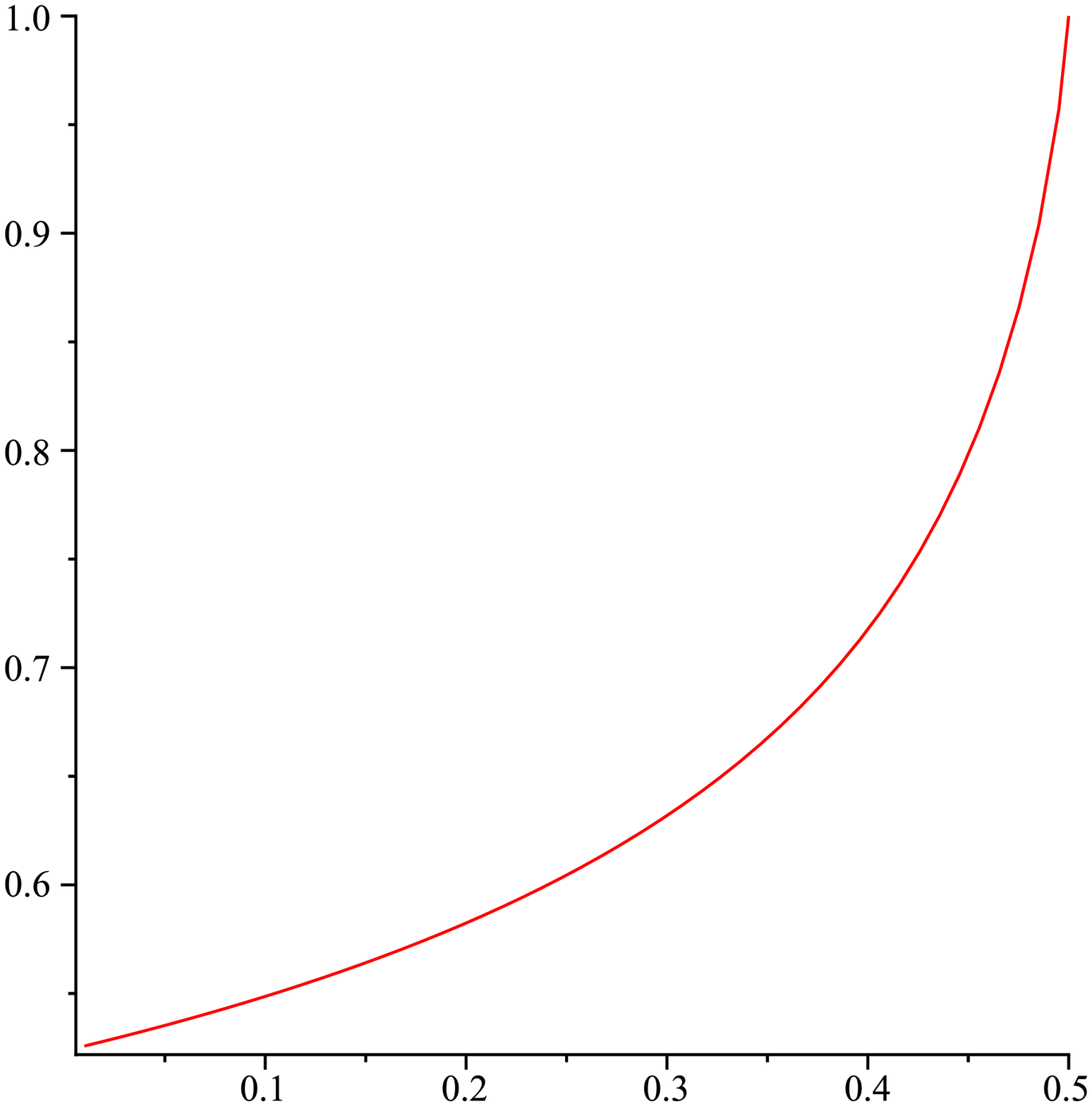,width=6cm,height=5cm,angle=0}
\psfig{file=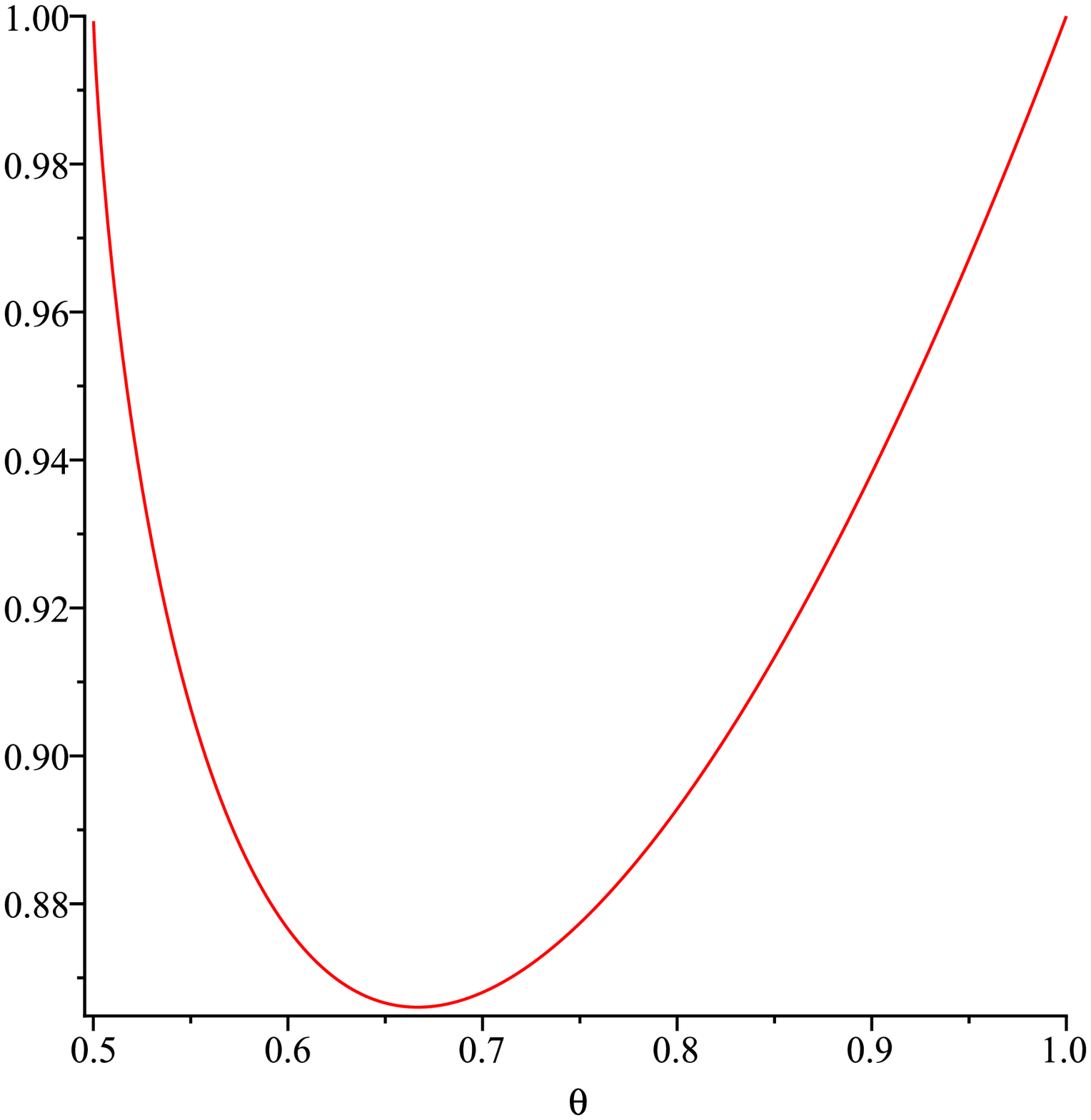,width=6cm,height=5cm,angle=0}
}
\caption{Graphs of sharp constants in one-dimensional  first-order inequalities
on complementary intervals: periodic functions (left)
\eqref{C11theta}, discrete case (right) \eqref{K(theta)}.}
\label{fig:K(theta)}
\end{figure}

The graph of the function $\mathrm{K}_1(\theta)$ is shown
in Fig.\ref{fig:K(theta)} on the right.
Here $\mathrm{K}_1(1/2)=1$ corresponds to~\eqref{1D}, and
$\mathrm{K}_1(1)=1$ corresponds to the trivial inequality
$u(0)^2\le\|u\|^2$ with extremal $u=\delta$.

\begin{remark}\label{R:expr_for_V}
{\rm
In this theorem we do not use the formula~\eqref{Sol_V(d)} for
$\mathbb{V}(d)$. However, if we do, then
finding~$\mathrm{K}_1(\theta)$ for $\theta\in[1/2,1]$
becomes very easy. In fact, by the definition of~$\mathbb{V}(d)$
and homogeneity, $\mathrm{K}_1(\theta)$ is the smallest constant
 for which
$\mathbb{V}(d)\le \mathrm{K}_1(\theta)d^{1-\theta}$
for all  $ d\in[0,4]$. Therefore
$$
\mathrm{K}_1(\theta)=\max_{d\in[0,4]}\mathbb{V}(d)/d^{1-\theta}=
\max_{d\in[0,4]}\frac12 d^{\theta-1/2}(4-d)^{1/2}=
\text{r.h.s.}\eqref{K(theta)}.
$$
The corresponding $d_*=(4\theta-2)/\theta\le2$ and $\lambda(d_*)>0$,
see~\eqref{lambda(d)}. This also explains why the region of
negative $\lambda$ does not play a role in Theorem~\ref{T:theta}.
}
\end{remark}

\setcounter{equation}{0}
\section{2D case}\label{S:2D}

In this  section we consider the two-dimensional inequalities
\begin{equation}\label{2Dtheta}
   u(0,0)^2\le \mathrm{K}_2(\theta)\|u\|^{2\theta}
   \|\nabla u\|^{2(1-\theta)}
,\quad 0\le\theta\le1,
\end{equation}
and address the same problems as in the previous section.

We set
$$
\Delta=-\D_1^*\D_1-\D_2^*\D_2.
$$
Then $(-\Delta u,u)=\|\nabla u\|^2\le8\|u\|^2$
for $u\in l_2(\mathbb{Z}^2)$. As in the 1D case we shall be dealing with the following extremal problem:
\begin{equation}\label{V(d)2}
\mathbb{V}(d):=\sup\bigl\{u(0,0)^2:\
u\in l^2(\mathbb{Z}^2), \ \|u\|^2=1,\
\|\D u\|^2=d\bigr\},
\end{equation}
where $0<d<8$.

The resolvent set of $-\Delta+\lambda$
is $(-\infty,-8)\cup(0,\infty)$ and
as before we consider the positive self-adjoint operator operator
$$
\mathbb{A}(\lambda)=
\left\{
  \begin{array}{ll}
-\Delta+\lambda, & \hbox{for $\lambda>0$;} \\
\phantom{-}\Delta-\lambda, & \hbox{for $\lambda<-8$.}
  \end{array}
\right.
$$
Our main goal is to find the Green's function of it:
\begin{equation}\label{Green2}
\mathbb{A}(\lambda)G_\lambda=\delta,
\end{equation}
more precisely, $G_\lambda(0,0)$.

\begin{proposition}\label{P:symm2}
For $d\in(0,8)$
\begin{equation}\label{Vsymm2}
\mathbb{V}(d)=\mathbb{V}(8-d).
\end{equation}
For $\lambda>0$ and $(n,m)\in\mathbb{Z}^2$
\begin{equation}\label{Gsymm2}
0<G_\lambda(n,m)=(-1)^{|n+m|}G_{-8-\lambda}(n,m).
\end{equation}
Finally, the function $d(\lambda)=\frac{\|\nabla G_\lambda\|^2}{\|G_\lambda\|^2}$ satisfies
\begin{equation}\label{dsymm2}
d(-8-\lambda)=8-d(\lambda).
\end{equation}
\end{proposition}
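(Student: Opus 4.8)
The plan is to follow the proof of Proposition~\ref{P:symm} essentially line by line; the only genuinely new ingredient is the correct checkerboard reflection on $l^2(\mathbb{Z}^2)$ and the identity it produces for the Dirichlet form. I would first introduce the orthogonal operator $T$ on $l^2(\mathbb{Z}^2)$ by
\[
   Tu=u^\star:=\{(-1)^{|n+m|}u(n,m)\}_{(n,m)\in\mathbb{Z}^2},
\]
record the trivial facts $T=T^*=T^{-1}$, $\|u^\star\|=\|u\|$ and $T\delta=\delta$, and then prove the analogue of~\eqref{D*-D}, namely $\|\nabla u^\star\|^2=8\|u\|^2-\|\nabla u\|^2$. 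This I would do coordinatewise: since $(-1)^{|n+1+m|}=-(-1)^{|n+m|}$, one has $\D_1u^\star(n,m)=-(-1)^{|n+m|}\bigl(u(n+1,m)+u(n,m)\bigr)$, hence $\D_1u^\star(n,m)^2+\D_1u(n,m)^2=2\bigl(u(n+1,m)^2+u(n,m)^2\bigr)$; summing over $\mathbb{Z}^2$ gives $\|\D_1u^\star\|^2+\|\D_1u\|^2=4\|u\|^2$, and likewise for $\D_2$, so adding proves the claim.

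Granting this identity, \eqref{Vsymm2} follows exactly as in one dimension: if $\|u\|^2=1$, $\|\nabla u\|^2=d$ and $u(0,0)^2$ is (nearly) equal to $\mathbb{V}(d)$, then $u^\star$ satisfies $\|u^\star\|^2=1$, $\|\nabla u^\star\|^2=8-d\in(0,8)$ and $u^\star(0,0)^2=u(0,0)^2$, so $\mathbb{V}(8-d)\ge\mathbb{V}(d)$, and the reverse inequality follows by applying the same step with $d$ replaced by $8-d$. For the equality in~\eqref{Gsymm2}, the quadratic–form version of the identity above reads $(-\Delta u^\star,u^\star)=((8I+\Delta)u,u)$ for all $u$, hence $T(-\Delta)T=8I+\Delta$, so for $\lambda>0$
\[
   -\Delta+\lambda=T\bigl(8I+\Delta+\lambda\bigr)T=T\,\mathbb{A}(-8-\lambda)\,T .
\]
Feeding this into~\eqref{Green2} and using $T\delta=\delta$ yields $\mathbb{A}(-8-\lambda)(TG_\lambda)=\delta$, whence $TG_\lambda=G_{-8-\lambda}$ by uniqueness of the Green's function, which is the asserted identity. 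Then~\eqref{dsymm2} is immediate: $T$ being orthogonal, $\|G_{-8-\lambda}\|^2=\|G_\lambda\|^2$, while $\|\nabla G_{-8-\lambda}\|^2=\|\nabla(TG_\lambda)\|^2=8\|G_\lambda\|^2-\|\nabla G_\lambda\|^2$, and dividing gives $d(-8-\lambda)=8-d(\lambda)$.

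The remaining point is the strict positivity $G_\lambda(n,m)>0$ for $\lambda>0$, which I would obtain by a discrete maximum principle exactly as for~\eqref{Gsymm}. Componentwise~\eqref{Green2} reads
\[
   (4+\lambda)G_\lambda(n,m)-G_\lambda(n+1,m)-G_\lambda(n-1,m)-G_\lambda(n,m+1)-G_\lambda(n,m-1)=\delta(n,m).
\]
Positive definiteness of $\mathbb{A}(\lambda)$ gives $G_\lambda(0,0)=(\mathbb{A}(\lambda)G_\lambda,G_\lambda)>0$, and $G_\lambda\in l^2$ forces decay at infinity, so if $G_\lambda$ were negative somewhere it would attain a strictly negative global minimum at some $(n_0,m_0)\ne(0,0)$; using $G_\lambda(\cdot)\ge G_\lambda(n_0,m_0)$ at the four neighbours, the left-hand side of the displayed equation at $(n_0,m_0)$ is at most $\lambda G_\lambda(n_0,m_0)<0$, contradicting that it equals $\delta(n_0,m_0)=0$. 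Hence $G_\lambda\ge0$; and if $G_\lambda(n_0,m_0)=0$ at some $(n_0,m_0)\ne(0,0)$, the same equation (whose right side is then $0$ and whose neighbour values are all $\ge0$) forces all four neighbours to vanish, so propagating along a lattice path to the origin gives $G_\lambda(0,0)=0$, a contradiction. Thus $G_\lambda>0$ everywhere. I expect this maximum-principle step to be the only place requiring care; the rest is just the algebra of the reflection $T$.
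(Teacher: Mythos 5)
Your proof is correct and follows essentially the same route as the paper, which simply declares the argument ``completely analogous'' to Proposition~\ref{P:symm} and Corollary~\ref{C:d} with the checkerboard reflection $Tu(n,m)=(-1)^{|n+m|}u(n,m)$; you supply exactly the details that analogy requires (the identity $\|\nabla u^\star\|^2=8\|u\|^2-\|\nabla u\|^2$, the conjugation $T(-\Delta)T=8I+\Delta$, and the five-point maximum principle for positivity). The only cosmetic difference is that you obtain \eqref{dsymm2} directly from $TG_\lambda=G_{-8-\lambda}$ and the orthogonality of $T$, whereas the paper points to the derivation via $f$, $g$, $h$ and \eqref{fgh} as in Corollary~\ref{C:d}; both are equally valid.
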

\begin{proof} The proof is completely analogous
to that of Proposition~\ref{P:symm} and
Corollary~\ref{C:d}, where the functions $f(\lambda)$,
$g(\lambda)$ and $h(\lambda)$ have the same meaning as in
\eqref{fgh1} and satisfy~\eqref{fgh}. The operator $T$ is
as follows
$$
Tu(n,m)=(-1)^{|n+m|}u(n,m).
$$
\end{proof}
\begin{lemma}\label{L:Green2(0,0)}
For $\lambda\in(-\infty,-8)\cup(0,\infty)$ the Green's function $G_\lambda\in l^2(\mathbb{Z}^2)$
 and
\begin{equation}\label{Green2(0,0)}
G_\lambda(0,0)=\frac2\pi\frac
{K\left(\frac{4}{{4+\lambda}}\right)}
{|4+\lambda|},
\end{equation}
where $K(k)$ is the complete elliptic integral of the first kind:
\begin{equation}\label{K}
K(k)=\int_0^1\frac{dt}{\sqrt{(1-t^2)(1-k^2t^2)}}\,.
\end{equation}
\end{lemma}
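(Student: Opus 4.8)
The plan is to compute $G_\lambda(0,0)$ via the discrete Fourier transform, exactly as was done in dimension one in Lemma~\ref{L:explicit}, and then reduce the resulting double integral to a complete elliptic integral. First I would write the recurrence $\mathbb{A}(\lambda)G_\lambda=\delta$ componentwise; multiplying by $e^{i(nx+my)}$ and summing over $(n,m)\in\mathbb{Z}^2$ gives, by the same computation $(e^{ix/2}-e^{-ix/2})^2=-4\sin^2\frac x2$ used before,
$$
\widehat{g}_\lambda(x,y)=\frac{\mathrm{sign}(\lambda)}{\lambda+4\sin^2\frac x2+4\sin^2\frac y2}\,,
$$
which lies in $L_2((0,2\pi)^2)$ both for $\lambda>0$ and for $\lambda<-8$ (the denominator does not vanish on the resolvent set), so that $G_\lambda\in l^2(\mathbb{Z}^2)$ by Parseval, and
$$
G_\lambda(0,0)=\frac{1}{(2\pi)^2}\int_{-\pi}^\pi\!\!\int_{-\pi}^\pi\widehat{g}_\lambda(x,y)\,dx\,dy
=\frac{\mathrm{sign}(\lambda)}{(2\pi)^2}\int_{-\pi}^\pi\!\!\int_{-\pi}^\pi\frac{dx\,dy}{\lambda+4\sin^2\frac x2+4\sin^2\frac y2}\,.
$$

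Next I would carry out the inner integration in, say, $y$. Fixing $x$ and setting $b:=\lambda+4\sin^2\frac x2$, the one-dimensional integral formula~\eqref{integral} with the substitution scaling $4\sin^2\frac y2$ (i.e. applying~\eqref{integral} to $b/4+\sin^2\frac y2$) yields
$$
\int_{-\pi}^\pi\frac{dy}{b+4\sin^2\frac y2}=\frac{\mathrm{sign}(b)\,2\pi}{\sqrt{b(b+4)}}\,,
$$
valid because $b$ stays away from $[-4,0]$ on the resolvent set. This leaves the single integral
$$
G_\lambda(0,0)=\frac{1}{2\pi}\int_{-\pi}^\pi\frac{dx}{\sqrt{(\lambda+4\sin^2\frac x2)(\lambda+4+4\sin^2\frac x2)}}\,,
$$
up to the sign bookkeeping, which works out to $+$ in both regimes.

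The last and most delicate step is to identify this integral with $\frac{2}{\pi}\,K\!\big(\frac{4}{4+\lambda}\big)/|4+\lambda|$. Here I would use $\sin^2\frac x2=\frac{1-\cos x}{2}$, reduce to an integral over $[0,\pi]$ by symmetry, and substitute $t=\cos\frac x2$ or an equivalent half-angle change of variable to bring the radicand into the Legendre normal form $(1-t^2)(1-k^2t^2)$; matching the two quadratic factors $\lambda+2-2\cos x$ and $\lambda+6-2\cos x$ forces the modulus $k^2=4/(4+\lambda)$ and produces the prefactor $1/|4+\lambda|$, after which one recognizes~\eqref{K}. I expect this reduction to the elliptic integral — getting the change of variables and the constants exactly right, and checking that $k^2=4/(4+\lambda)<1$ on the resolvent set so that $K$ is well-defined (for $\lambda<-8$ one has $4+\lambda<-4$, hence $0<k^2<1$ as well) — to be the main obstacle; the Fourier-transform and iterated-integration steps are routine given the one-dimensional computations already in the paper. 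One can cross-check the final formula against the known series for $G_\lambda(0,0)$ or against its behaviour as $\lambda\to\infty$, where both sides are asymptotic to $1/\lambda$.
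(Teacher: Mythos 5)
Your route is the paper's route: discrete Fourier transform to get $\widehat{g}_\lambda(x,y)=\mathrm{sign}(\lambda)/(\lambda+4\sin^2\frac x2+4\sin^2\frac y2)$, membership in $L_2$ giving $G_\lambda\in l^2(\mathbb{Z}^2)$, the inner $y$-integration via the one-dimensional formula \eqref{integral}, and then reduction of the remaining single integral to a complete elliptic integral — this is exactly the chain of equalities \eqref{gradshtein}. The only structural difference is at the end points: the paper treats only $\lambda>0$ directly and gets $\lambda<-8$ for free from the symmetry \eqref{Gsymm2} together with the evenness of $K$, whereas you propose to carry the sign bookkeeping through both regimes (which also works, but is not needed).

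The genuine gap is in the step you yourself flag as the main obstacle, and the one quantitative claim you make there is wrong: you assert that matching the factors $\lambda+2-2\cos x$ and $\lambda+6-2\cos x$ forces ``the modulus $k^2=4/(4+\lambda)$''. In the paper's convention \eqref{K} the argument of $K$ is the modulus $k$, and the correct identification is $k=\frac{4}{4+\lambda}$, i.e. $k^2=\frac{16}{(4+\lambda)^2}$; with $k^2=4/(4+\lambda)$ you would obtain $K\bigl(\sqrt{4/(4+\lambda)}\bigr)$, which is not \eqref{Green2(0,0)}. (Your own consistency check betrays the slip: for $\lambda<-8$ the quantity $4/(4+\lambda)$ is negative, so it cannot be $k^2$; it is $k$, and then $0<k^2<1$ and evenness of $K$ apply.) The way the paper actually closes this step is to substitute $t=\sin^2\frac x2$, which turns the single integral into $\frac1{4\pi}\int_0^1 dt/\sqrt{t(1-t)(\tfrac\lambda4+t)(\tfrac\lambda4+1+t)}$, and then to use the standard reduction of a quartic-radical integral between consecutive roots (formula 3.147.7 of \cite{Gr-Ry}): with roots $1,0,-\tfrac\lambda4,-\tfrac\lambda4-1$ the prefactor is $2/\bigl((1+\tfrac\lambda4)(\tfrac\lambda4+1)\bigr)^{1/2}=2/(1+\tfrac\lambda4)$ and the modulus is the cross-ratio $k^2=\frac{1\cdot1}{(1+\lambda/4)^2}$, i.e. $k=\frac4{4+\lambda}$, giving \eqref{Green2(0,0)}. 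Also note that your proposed sanity check ($G_\lambda(0,0)\sim1/\lambda$ as $\lambda\to\infty$) would not detect the wrong modulus, since $K\to\pi/2$ in either case; so the Legendre-form reduction has to be done (or cited) exactly, as in the paper.
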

\begin{proof}
Setting
$$
\widehat{g}_\lambda(x,y):=\sum_{k,l=-\infty}^\infty
G_\lambda(k,l)e^{ikx+ily},
$$
and acting as in Lemma~\ref{L:explicit} we find that
$$
\widehat{g}_\lambda(x,y)=\frac{\mathrm{sign}(\lambda)}{\lambda +
4(\sin^2\frac x2+\sin^2\frac y2)}\,
$$
and
\begin{equation}\label{G(n,m)}
G_\lambda(n,m)=\frac{\mathrm{sign}(\lambda)}{4\pi^2}
\int_0^{2\pi}\int_0^{2\pi}\frac{\cos(nx+my)dxdy}
{\lambda+4(\sin^2\frac x2+\sin^2\frac y2)}\,.
\end{equation}

Therefore for $\lambda>0$, using \eqref{integral}
\begin{equation}\label{gradshtein}
\aligned
G_\lambda(0,0)=
\frac14\frac1{4\pi^2}\int_{0}^{2\pi} dx\int_{0}^{2\pi}
\frac{dy}{(\frac\lambda 4+
\sin^2\frac x2)+\sin^2\frac y2}=\\=
\frac1{16\pi^2}\int_{0}^{2\pi}
\frac{2\pi dx}{\sqrt{(\frac\lambda 4+\sin^2\frac x2)
(\frac\lambda 4+1+\sin^2\frac x2)}}=\\=
\frac1{4\pi}\int_{0}^\pi
\frac{ dx}{\sqrt{(\frac\lambda 4+\sin^2\frac x2)
(\frac\lambda 4+1+\sin^2\frac x2)}}=\\=
\frac1{4\pi}\int_{0}^1\frac{ dt}{\sqrt{t(1-t)(\frac\lambda 4+t)
(\frac\lambda 4+1+t)}}=\\=
\frac1{4\pi}\cdot \frac{2K\left(\frac{1}{\frac\lambda 4+1}\right)}{\frac\lambda 4+1},
\endaligned
\end{equation}
where the last integral was calculated by
transforming general elliptic integrals to the standard form (see formula $3.147.7$ in \cite{Gr-Ry}).

Since $K(k)$ is even, we see from \eqref{Gsymm2} that formula
\eqref{Green2(0,0)} works both for $\lambda>0$ and $\lambda<-8$.
\end{proof}
\begin{remark}\label{R:change}
{\rm
The equality in \eqref{Gsymm2} also follows from
\eqref{G(n,m)} by changing the variables
$(x,y)\to(x'+\pi,y'+\pi)$ and using the fact that the
integrand is even.
}
\end{remark}

\begin{theorem}\label{T:theta2}
The inequality
$$
u(0,0)^2\le \mathrm{K}_2(\theta)\|u\|^{2\theta}
   \|\nabla u\|^{2(1-\theta)}
$$
holds for $\theta\in (0,1]$.
For $\theta\in (0,1)$ the sharp constant $\mathrm{K}_2(\theta)$ is
\begin{equation}\label{K(theta)2}
\mathrm{K}_2(\theta)=\frac{1}{\theta^\theta(1-\theta)^{1-\theta}}\cdot
\max_{\lambda>0}(\lambda^{\theta} G_\lambda(0,0)),
\end{equation}
and for each $0<\theta<1$ there exists a unique extremal sequence
\begin{equation}\label{extr2}
u_{\lambda_*}=G_{\lambda_*},
\quad\text{where}\quad
\lambda_*=\mathrm{argmax}(\lambda^{\theta} G_\lambda(0,0)).
\end{equation}
Finally, $\mathrm{K}_2(1)=1$ with $u_*=\delta$,
and
\begin{equation}\label{to0}
\mathrm{K}_2(\theta)=\frac1{4\pi e\theta}+
o\left(\frac1\theta\right)\quad
\text{as}\quad\theta\to 0^+.
\end{equation}
The graph of the function $\mathrm{K}_2(\theta)$
 is shown in~Fig~\ref{fig:K2(theta)}.
\begin{figure}[htb]
\centerline{\psfig{file=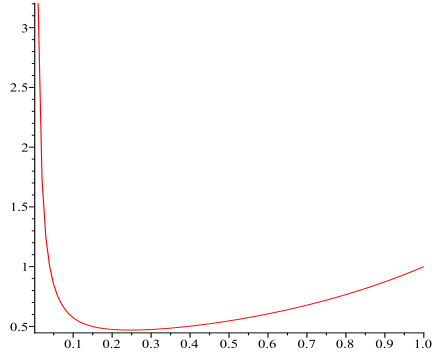,width=6cm,height=5cm,angle=0}}
\caption{Graph of $\mathrm{K}_2(\theta)$ on $\theta\in (0,1]$.}
\label{fig:K2(theta)}
\end{figure}
\end{theorem}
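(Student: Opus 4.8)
The plan is to mirror the one-dimensional argument of Theorem~\ref{T:theta}, replacing $f(\lambda)=G_\lambda(0)=1/\sqrt{\lambda(\lambda+4)}$ by the elliptic-integral expression $G_\lambda(0,0)$ from Lemma~\ref{L:Green2(0,0)} and using the symmetry relation \eqref{dsymm2} in place of Corollary~\ref{C:d}. First I would dispose of the endpoint $\theta=1$, where $u(0,0)^2\le\|u\|^2$ is immediate with extremal $\delta$. For $0<\theta<1$, given $u\not\equiv0$ I set $\lambda:=\frac{\theta}{1-\theta}\,\frac{\|\nabla u\|^2}{\|u\|^2}>0$ and apply the Cauchy--Schwarz bound \eqref{additive} (whose 2D analogue holds verbatim, since $\mathbb{A}(\lambda)$ is positive definite and $G_\lambda\in l^2(\mathbb{Z}^2)$), obtaining exactly as in \eqref{sup}
\begin{equation*}
u(0,0)^2\le G_\lambda(0,0)\|u\|^2\Bigl(\tfrac{\|\nabla u\|^2}{\|u\|^2}+\lambda\Bigr)
=\frac{1}{\theta^\theta(1-\theta)^{1-\theta}}\,\lambda^\theta G_\lambda(0,0)\,\|u\|^{2\theta}\|\nabla u\|^{2(1-\theta)},
\end{equation*}
and then taking the supremum over $\lambda>0$ yields \eqref{K(theta)2}, provided that supremum is finite and attained.

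The next step is therefore to show that $\Phi_\theta(\lambda):=\lambda^\theta G_\lambda(0,0)$ attains a finite maximum on $(0,\infty)$ for every $\theta\in(0,1)$, and that the maximum is unique. Using \eqref{Green2(0,0)}, $G_\lambda(0,0)=\frac2\pi\frac{K(4/(4+\lambda))}{4+\lambda}$. As $\lambda\to\infty$ the elliptic integral tends to $K(0)=\pi/2$, so $G_\lambda(0,0)\sim 1/\lambda$ and $\Phi_\theta(\lambda)\sim\lambda^{\theta-1}\to0$ since $\theta<1$. As $\lambda\to0^+$ the modulus $k=4/(4+\lambda)\to1$ and $K(k)\sim\frac12\ln\frac{16}{1-k^2}\sim\frac12\ln(64/\lambda)$, so $G_\lambda(0,0)\sim\frac1{2\pi}\ln(64/\lambda)$ and $\Phi_\theta(\lambda)\sim\frac1{2\pi}\lambda^\theta\ln(1/\lambda)\to0$ since $\theta>0$. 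Hence $\Phi_\theta$ is continuous and positive on $(0,\infty)$ with limit $0$ at both ends, so it attains a maximum at some interior $\lambda_*=\lambda_*(\theta)$. Uniqueness of the maximizer can be obtained, as in the 1D case, not from convexity but from the extremal analysis: at any critical point $\lambda_*$ one has $\lambda_*\Phi_\theta'(\lambda_*)=0$, i.e. $\lambda_* G_\lambda'(0,0)|_{\lambda_*}+\theta G_{\lambda_*}(0,0)=0$, and by \eqref{fgh} this forces $d(\lambda_*)=\frac{1-\theta}{\theta}\lambda_*$; plugging $u_*=G_{\lambda_*}$ back shows \eqref{lambda} holds for it and both inequalities in the displayed chain become equalities, so $u_*=G_{\lambda_*}$ is extremal. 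That any two maximizers would give the same value, together with strict positivity of $G_\lambda(0,0)$ and the fact that $d(\lambda)=h(\lambda)/g(\lambda)$ is strictly monotone (which follows from $h,g>0$ and the relations \eqref{fgh} giving $d'(\lambda)$ a fixed sign, or alternatively from the explicit fact that $\lambda\mapsto d(\lambda)$ is a bijection onto $(0,8)$ as in the 1D model), pins down $\lambda_*$ uniquely. This yields \eqref{extr2}; the lower bound $u(0,0)^2\ge$ (value at $u=G_{\lambda_*}$) shows the constant is sharp.

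Finally, for the asymptotics \eqref{to0} as $\theta\to0^+$, the task is a Laplace-type/saddle-point estimate of $\max_{\lambda>0}\lambda^\theta G_\lambda(0,0)$. Since $\Phi_\theta(\lambda)\to0$ at both ends and the maximizer $\lambda_*(\theta)\to0$ (because the logarithmic growth of $G_\lambda(0,0)$ near $0$ is what the small factor $\lambda^\theta$ must balance), I would write, for $\lambda$ small, $G_\lambda(0,0)=\frac1{2\pi}\bigl(\ln(64/\lambda)+O(\lambda\ln(1/\lambda))\bigr)$ and maximize $\psi(\lambda):=\frac1{2\pi}\lambda^\theta\ln(64/\lambda)$: $\psi'(\lambda)=0$ gives $\theta\ln(64/\lambda)=1$, i.e. $\lambda_*=64 e^{-1/\theta}$, whence $\lambda_*^\theta=64^\theta e^{-1}$ and $\psi(\lambda_*)=\frac1{2\pi}\cdot 64^\theta e^{-1}\cdot\frac1\theta=\frac{1}{2\pi e\theta}(1+o(1))$. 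Combining with the prefactor $\theta^{-\theta}(1-\theta)^{-(1-\theta)}\to1$ in \eqref{K(theta)2} gives $\mathrm{K}_2(\theta)=\frac1{2\pi e\theta}(1+o(1))$; note this is the constant $\frac1{4\pi e\theta}$ of the statement provided $\|\nabla u\|^2$ carries the normalization built into \eqref{Poincare}-type bounds — I would double-check the numerical constant against the exact optimization of the full $\Phi_\theta$ rather than its leading logarithmic surrogate, since the $O(\lambda\ln(1/\lambda))$ correction to $G_\lambda(0,0)$, while lower order, must be confirmed not to contribute at order $1/\theta$. This last verification — controlling the error in the saddle-point computation uniformly enough to extract the sharp coefficient of $1/\theta$ — is the main technical obstacle; the rest is a routine transcription of the 1D proof.
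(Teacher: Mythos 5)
Your overall strategy is exactly the paper's: transcribe the 1D argument with $G_\lambda(0,0)$ from \eqref{Green2(0,0)}, check that $\lambda^\theta G_\lambda(0,0)$ vanishes at $\lambda=0$ and $\lambda=\infty$ so that the supremum in \eqref{K(theta)2} is an attained maximum with extremal $G_{\lambda_*}$, and then extract \eqref{to0} from the logarithmic behaviour of $G_\lambda(0,0)$ at $\lambda=0$. The one place where something beyond the 1D transcription has to be computed, however, contains a factor-of-two slip that corrupts your asymptotics. With $k=4/(4+\lambda)$ one has $1-k^2=\lambda(\lambda+8)/(4+\lambda)^2\sim\lambda/2$ as $\lambda\to0$, so $K(k)\sim\tfrac12\ln\bigl(16/(1-k^2)\bigr)\sim\tfrac12\ln(32/\lambda)$ and therefore
\begin{equation*}
G_\lambda(0,0)=\frac{2}{\pi}\,\frac{K(k)}{4+\lambda}\sim\frac{1}{4\pi}\ln\frac{32}{\lambda},
\end{equation*}
which is the paper's expansion \eqref{asymptotic}; your claim $G_\lambda(0,0)\sim\frac{1}{2\pi}\ln(64/\lambda)$ has the coefficient $\frac1{2\pi}$ instead of $\frac1{4\pi}$ (you lost the division by $4+\lambda\approx4$). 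This is why your saddle-point computation produces $\frac{1}{2\pi e\theta}$ rather than the stated $\frac{1}{4\pi e\theta}$, and the discrepancy is \emph{not} a normalization issue with $\|\nabla u\|^2$: with the correct coefficient, maximizing $\frac1{4\pi}\lambda^\theta\ln(C/\lambda)$ gives $\lambda_*=Ce^{-1/\theta}$ and the value $\frac{C^\theta}{4\pi e\theta}=\frac{1}{4\pi e\theta}(1+o(1))$, independent of the constant $C$ inside the logarithm, which is precisely the paper's argument. Your final worry about the $O(\lambda\ln(1/\lambda))$ correction is also easily laid to rest: since $\lambda_*(\theta)\sim Ce^{-1/\theta}$ is exponentially small (and for, say, $\lambda\ge1$ one has $\lambda^\theta G_\lambda(0,0)=O(1)$ uniformly, so the true maximizer does migrate to $\lambda\to0$), the correction contributes only exponentially small terms, not terms of order $1/\theta$.

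Two smaller points. First, your uniqueness argument for $\lambda_*$ is not conclusive as written: the relation $d(\lambda_*)=\frac{1-\theta}{\theta}\lambda_*$ together with monotonicity of $d$ does not by itself rule out several intersections with the increasing linear function $\frac{1-\theta}{\theta}\lambda$; the paper is equally brief here (it simply asserts the maximum is attained at a unique point, with the monotonicity of $d(\lambda)$ established as in Theorem~\ref{T:V(d)2} via \cite[Theorem~2.1]{IZ}), so this is a presentational gap shared with the source rather than a defect of your route. Second, for the sharpness/extremality step you should state, as in \eqref{sup}, that equality in the Cauchy--Schwarz bound forces $u=\mathrm{const}\cdot G_\lambda$ and that for $u=G_{\lambda_*}$ the choice of $\lambda$ in your substitution is consistent, i.e.\ both inequalities in the chain become equalities simultaneously; you indicate this, and it goes through verbatim as in Theorem~\ref{T:theta}.
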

\begin{proof}
Similarly to
Theorem~\ref{T:theta}, we have
\begin{equation}\label{gen_form}
\mathrm{K}_2(\theta)=\frac{1}{\theta^\theta(1-\theta)^{1-\theta}}\cdot
\sup_{\lambda>0}(\lambda^{\theta} G_\lambda(0,0)),
\end{equation}
where, of course, $G_\lambda(0,0)$ is given by~\eqref{Green2(0,0)}. We have the following
asymptotic expansions
\begin{equation}\label{asymptotic}
\aligned
&
G_\lambda(0,0)=\frac1{4\pi}(2\log(4\sqrt{2})+
\log(\tfrac1\lambda))+O(\lambda\log(\tfrac1\lambda)),
\quad\text{as}\quad \lambda\to0,\\
&
G_\lambda(0,0)=\frac1\lambda-\frac4{\lambda^2}+
O(\tfrac1{\lambda^3}),
\quad\text{as}\quad \lambda\to\infty.
\endaligned
\end{equation}
Hence, for $0<\theta<1$ we see that  $\lambda^\theta G_\lambda(0,0)=0$ both at $\lambda=0$ and $\lambda=\infty$, and the supremum in~\eqref{gen_form} is the maximum, which proves
\eqref{K(theta)2} and \eqref{extr2}.

We also see from the first formula that for
small positive $\theta$ the leading term in
the second factor in \eqref{gen_form}
is
$$
\max_{0<\lambda<1}\frac1{4\pi}\lambda^\theta\log(\tfrac1\lambda)
=\frac1{4\pi e\theta},
$$
while the first factor tends to $1$. This proves~\eqref{to0}.
For example,
$$
\mathrm{K}_2(0.01)=3.205\dots,
\ \text{while}\
\frac1{0.99^{0.99}0.01^{0.01}}\frac1{4\pi e \cdot 0.01}=
3.096\dots\,.
$$

\end{proof}

In the limiting case $\theta=0$ inequality \eqref{2Dtheta} holds
with a logarithmic correction term of Brezis--Galouet
type \cite{BG},\cite{Zelik}.

The solution of the extremal problem~\eqref{V(d)2} is given in
terms of the functions $f(\lambda)$, $g(\lambda)$ and
$h(\lambda)$:
\begin{equation}\label{fgh2d}
\aligned
&f(\lambda)=G_\lambda(0,0)=\frac2\pi\frac
{K\left(\frac{4}{{4+\lambda}}\right)}
{|4+\lambda|}\,,\\
&g(\lambda)=\|G_\lambda\|^2=
-\mathrm{sign}(\lambda)f'(\lambda)=\frac{2E(\frac4{4+\lambda})}{\pi\lambda(\lambda+8)}\,,\\
&h(\lambda)=\|\nabla G_\lambda\|^2=\mathrm{sign}(\lambda)\bigl(f(\lambda)+\lambda f'(\lambda)\bigr)
=\frac2\pi\frac {K(\frac4{4+\lambda})}{4+\lambda}-
\frac{2E(\frac4{4+\lambda})}{\pi(\lambda+8)}\,,
\endaligned
\end{equation}
where  $E(k)$ is the complete elliptic integral of the second kind:
$$
E(k)=\int_0^1 \frac{\sqrt{1-k^2 t^2} }{\sqrt{1-t^2}}dt,
$$
and where we used
$\frac{\mathrm{d}K(k)}{\mathrm{d}k}=\frac{E(k)}{k(1-k^2)}-\frac{K(k)}{k}$.

\begin{theorem}\label{T:V(d)2}
The solution $\mathbb{V}(d)$ of problem~\eqref{V(d)2}
is
\begin{equation}\label{V(d)2sol}
\mathbb{V}(d)=u^*_{\lambda(d)}(0)^2=\frac {f^2(\lambda(d))}{g(\lambda(d))}=\frac {2K(\frac 4{4+\lambda(d)})^2\lambda(d)(\lambda(d)+8)}
{\pi(4+\lambda(d))^2E(\frac4{4+\lambda(d)})},
\end{equation}
where $\lambda(d)$ is the inverse function of
the function $d(\lambda)$:
\begin{equation}\label{d(lambda)2}
d(\lambda)=\frac{h(\lambda)}{g(\lambda)}=\frac{\lambda(\lambda+8)}{(\lambda+4)}
\frac{K(\frac4{\lambda+4})}{E(\frac4{\lambda+4})}-\lambda,
\end{equation}
and where $u^*_{\lambda(d)}=G_\lambda/\|G_\lambda\|$.
Here $d(\lambda)$ is defined on $(-\infty,-8)\cup(0,\infty)$,
satisfies \eqref{dsymm2}
and monotonically increases from
$d(-\infty)=4$ to $d(-8)=8$ and then from
$d(0)=0$ to $d(\infty)=4$. The inverse function
$\lambda(d)$ is defined on $d\in[0,8]\setminus\{0\}$ and satisfies
$$
\lambda(8-d)=-8-\lambda(d).
$$
Their graphs are shown in Fig.~\ref{Fig:d(l)+l(d)}.
Finally, $\mathbb{V}(4)=1$ and $u^*=\delta$.
\begin{figure}[htb]
\centerline{\psfig{file=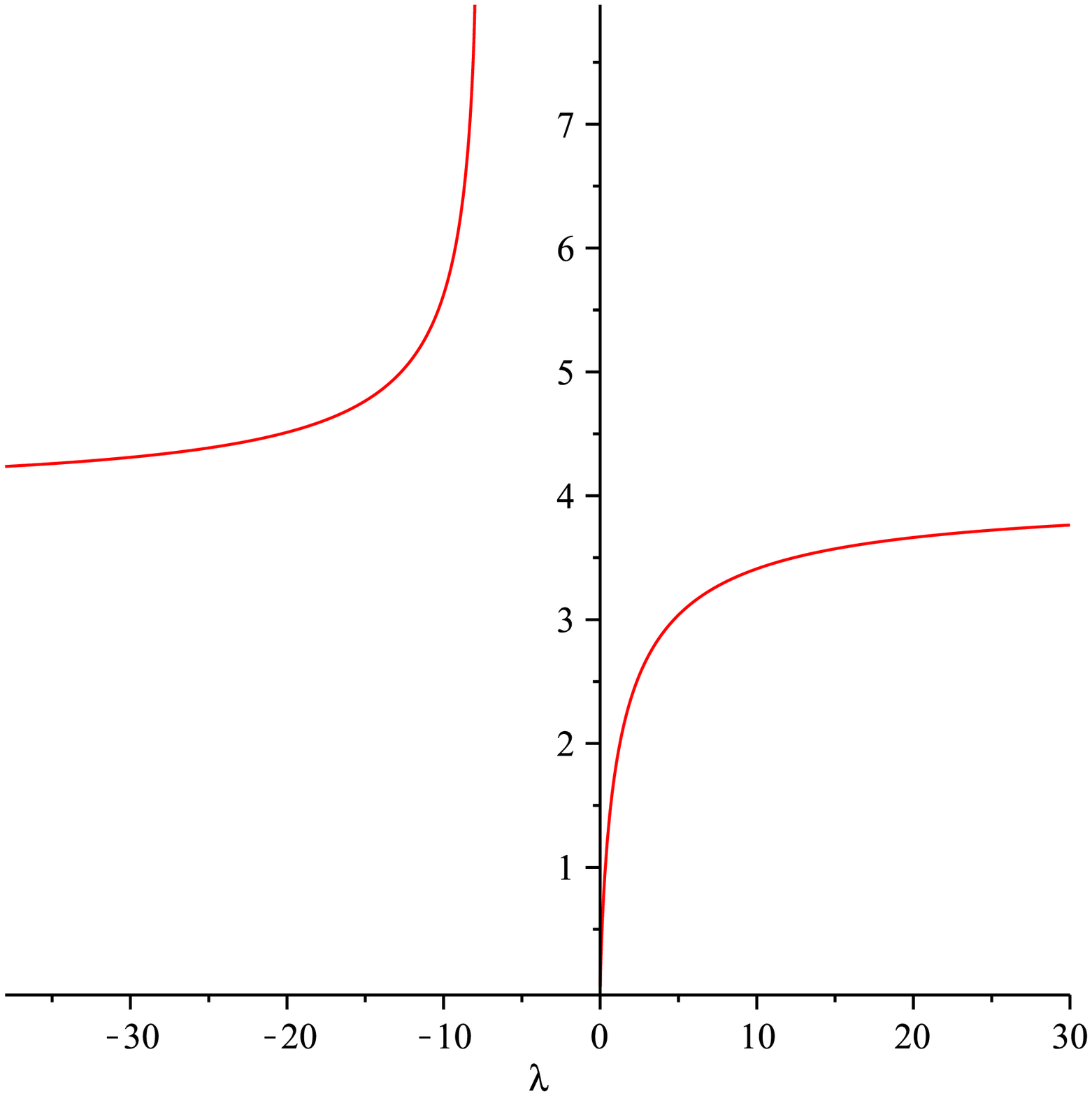,width=6.4cm,height=5cm,angle=0}
\psfig{file=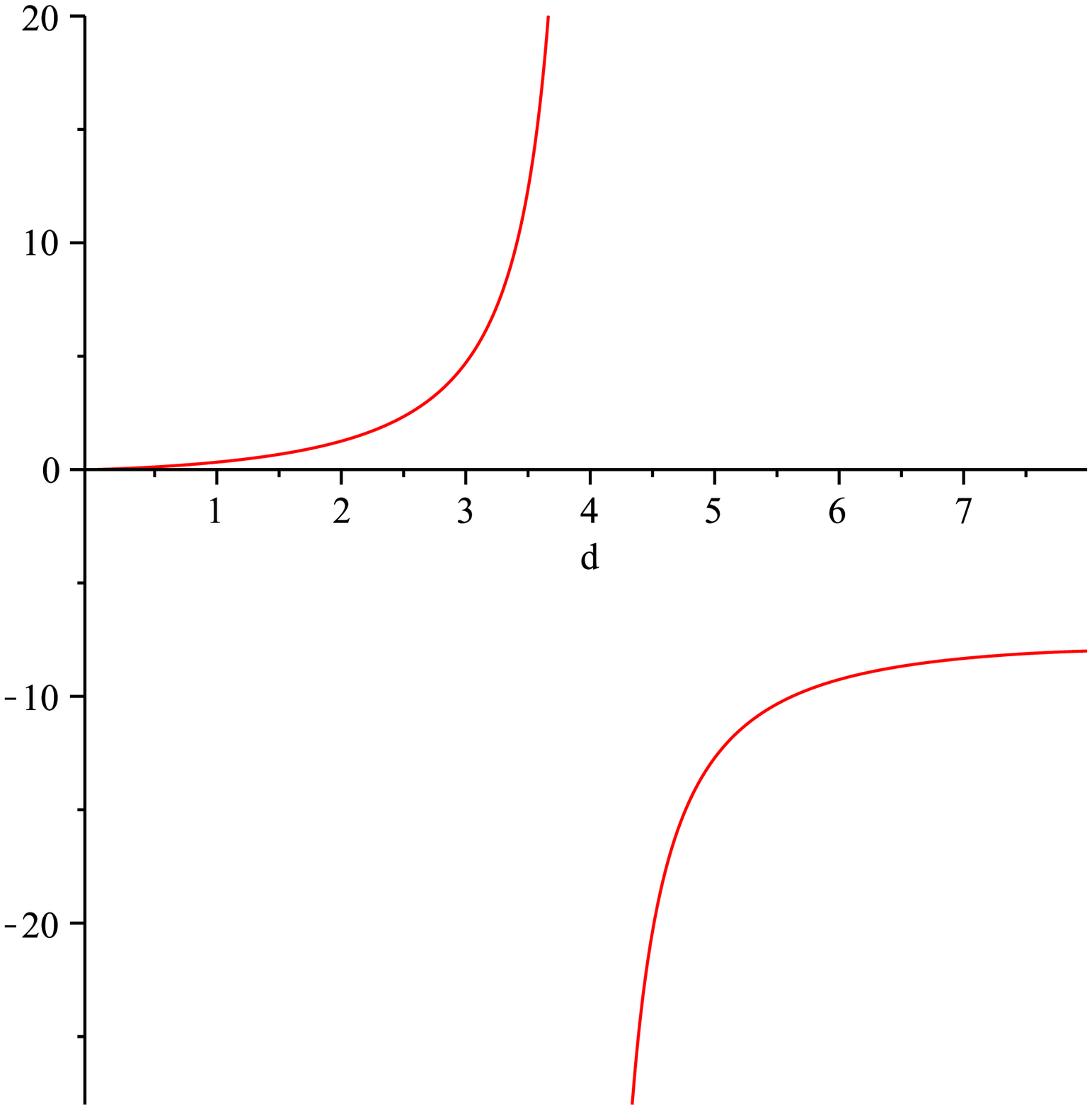,width=6.4cm,height=5cm,angle=0} }
\caption{Graphs of $d(\lambda)$ and $\lambda(d)$.} \label{Fig:d(l)+l(d)}
\end{figure}
\end{theorem}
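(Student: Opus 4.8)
The plan is to mimic exactly the argument of Theorem~\ref{T:V(d)}, using the variational characterisation \eqref{additive} together with the explicit Green's function data \eqref{fgh2d}. First I would recall that, by \eqref{additive}, for any $u\in l^2(\mathbb{Z}^2)$ and any $\lambda$ in the resolvent set we have $u(0,0)^2\le G_\lambda(0,0)(\mathbb{A}(\lambda)u,u)$, with equality precisely when $u=\mathrm{const}\cdot G_\lambda$. Normalising $u^*_\lambda:=G_\lambda/\|G_\lambda\|$ gives $\|u^*_\lambda\|^2=1$ and the inequality becomes an identity, so that $u^*_\lambda(0,0)^2=f(\lambda)^2/g(\lambda)$ and $\|\nabla u^*_\lambda\|^2/\|u^*_\lambda\|^2=h(\lambda)/g(\lambda)=d(\lambda)$. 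Thus, provided the map $\lambda\mapsto d(\lambda)$ is a bijection from $(-\infty,-8)\cup(0,\infty)$ onto $(0,8)\setminus\{4\}$ (with the point $d=4$, $\lambda=\infty$ corresponding to $u^*=\delta$), the element $u^*_{\lambda(d)}$ lies in the admissible set of \eqref{V(d)2} and realises the value $f^2(\lambda(d))/g(\lambda(d))$, which therefore equals $\mathbb{V}(d)$. This yields \eqref{V(d)2sol} once the formulas of \eqref{fgh2d} are substituted.

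The remaining work is to establish the claimed analytic properties of $d(\lambda)$. I would derive the closed form \eqref{d(lambda)2} directly from $d(\lambda)=h(\lambda)/g(\lambda)$ and the expressions for $g,h$ in \eqref{fgh2d}, then analyse monotonicity and boundary behaviour. As $\lambda\to\infty$ one has $K(4/(4+\lambda))\to\pi/2$, $E(4/(4+\lambda))\to\pi/2$, and a short expansion gives $d(\lambda)\to4$ (consistent with $G_\lambda\to\delta$ after normalisation, since $\|\nabla\delta\|^2=4\|\delta\|^2$ in two dimensions); as $\lambda\to0^+$ the logarithmic blow-up of $K$ against the bounded $E$ forces $d(\lambda)\to0$. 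The symmetry relation \eqref{dsymm2}, already proved in Proposition~\ref{P:symm2}, then transports these two limits to the branch $\lambda<-8$: $d(-8^-)=8$ and $d(-\infty)=4$. Monotone increase on each branch can be checked by differentiating \eqref{d(lambda)2} and using the standard derivative identities for $K$ and $E$; alternatively, and more cleanly, one can argue that $\mathbb{V}$ is a well-defined function of $d$ (the extremal problem \eqref{V(d)2} has, for each $d$, a solution of the form $G_\lambda$ by the equality case in \eqref{additive}), so $\lambda(d)$ is forced to be single-valued on the range of $d(\lambda)$, and continuity plus the computed endpoints pin down the range as $[0,8]\setminus\{0\}$. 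Finally $\lambda(8-d)=-8-\lambda(d)$ is just the inverse of \eqref{dsymm2}, and $\mathbb{V}(4)=f^2/g$ evaluated in the limit $\lambda\to\infty$ equals $1$, attained at $u^*=\delta$.

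The main obstacle I expect is the monotonicity of $d(\lambda)$ on $(0,\infty)$: the formula \eqref{d(lambda)2} mixes $K$ and $E$ in a way that does not obviously have a sign after one differentiation, so some care is needed with the identities $\frac{dK}{dk}=\frac{E}{k(1-k^2)}-\frac{K}{k}$ and $\frac{dE}{dk}=\frac{E-K}{k}$, together with the elementary inequalities $K(k)>E(k)$ and $E(k)>(1-k^2)K(k)$ valid for $0<k<1$. A convenient way to avoid the messiest computation is to note that $d(\lambda)$ is, by construction, the ratio $\|\nabla u\|^2/\|u\|^2$ evaluated along the one-parameter family of ground states of $\mathbb{A}(\lambda)$, and such Rayleigh-type quantities are monotone in the spectral parameter by a standard resolvent-positivity argument; combined with the explicit endpoint values this already gives the bijectivity needed for the theorem, and the closed form \eqref{d(lambda)2} is then recorded for completeness and for the numerical plots in Fig.~\ref{Fig:d(l)+l(d)}. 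Everything else — substitution into \eqref{V(d)2sol}, the symmetry statements, and the identification of the $\delta$-extremal — is routine bookkeeping parallel to the one-dimensional case.
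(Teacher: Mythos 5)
Your proposal is correct and follows essentially the same route as the paper: the sharp pointwise bound \eqref{additive} evaluated along $G_\lambda$ with the explicit data \eqref{fgh2d}, the endpoint asymptotics, the symmetry \eqref{dsymm2}, and the monotonicity of $d(\lambda)$ isolated as the only non-routine point, which is exactly where the paper also defers to the general argument of \cite[Theorem~2.1]{IZ} (in essence a Cauchy--Schwarz inequality for the spectral measure of $-\Delta$ at $\delta$). One caution on your proposed shortcuts: well-definedness of $\mathbb{V}(d)$ does not by itself force $\lambda(d)$ to be single-valued, and $G_\lambda$ is not a ground state, so the ``resolvent-positivity'' appeal is loose as stated; injectivity does follow cleanly, however, from the uniqueness of the equality case in \eqref{additive} (two parameters giving the same $d$ would force the corresponding Green's functions to be proportional, hence equal), or simply by differentiating \eqref{d(lambda)2} as you first suggest.
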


\begin{proof}
We act as in Theorem~\ref{T:V(d)}, the essential difference
being that we now do not have a formula for the inverse
function $\lambda(d)$, by means  of which we construct the extremal element for
each $d$.
Although $d(\lambda)$ is given explicitly, the monotonicity
of it  required for the existence of the inverse function
is a rather general fact and can be verified
as in \cite[Theorem~2.1]{IZ}, where the continuous case was considered.
\end{proof}

We now find an explicit majorant $\mathbb{V}_0(d)$
for the implicitly defined solution $\mathbb{V}(d)$.
In view of the symmetry~\eqref{Vsymm2}
 it suffices  to study the case $d\to0$
only and then, by replacing $d\to d(8-d)/8$ we get
the symmetric expansions valid for both singularities.
We have the following expansions
\begin{equation}\label{d(lambda)0infty}
\aligned
&d(\lambda)=(5\ln2-\ln\lambda-1)\lambda+O_{\lambda\to0}((\lambda\ln\lambda)^2),\\
&d(\lambda)=4-\frac8{\lambda}+O_{\lambda\to\infty}(1/\lambda^2).
\endaligned
\end{equation}
Truncating the first expansion and solving
$d=(5\ln2-\ln\lambda-1)\lambda$, we have
$$
\lambda=-\frac{d}{W_{-1}(-\frac1{32}ed)}
$$
where $W_{-1}(z)$ is the $-1$th branch of the
Lambert function. Using the known asymptotic
expansions for the Lambert function, we get the following
expression for $\lambda(d)$
\begin{equation}\label{lamda(d)expan}
\lambda(d)=\frac d{5\ln2-1-\ln(d)+\ln(5\ln(2)-1-\ln d)
+O(\frac{\ln(-\ln d)}{\ln d})}.
\end{equation}
Using
\begin{equation}\label{lambda=0and infty}
\aligned
&\frac {f^2(\lambda)}{g(\lambda)}=
\frac{(\frac52\ln2-\frac12\ln\lambda)^2}\pi
\lambda+O_{\lambda\to0}(\lambda^2(\ln\lambda)^3),\\
&\frac {f^2(\lambda)}{g(\lambda)}=1-\frac4{\lambda^2}+
O_{\lambda\to\infty}\biggl(\frac1{\lambda^3}\biggr)
\endaligned
\end{equation}
and substituting \eqref{lamda(d)expan} into the first
expansion we  get
$$
\mathbb V(d)=\frac1{4\pi}d(-\ln d+\ln(1-\ln d)+
\gamma+o_{d\to0}(1))
$$
where $\gamma=5\ln(2)+1<2\pi$. This
justifies our choice of the approximation to
$\mathbb V(d)$:
$$
\mathbb V_0(d)=\frac1{4\pi} \frac {d(8-d)}8
\left(\ln\frac{16}{d(8-d)}+
\ln\biggl(1+\ln\frac{16}{d(8-d)}\biggr)+
2\pi\right)
$$
The constant $2\pi$ instead of $\gamma$
(and the numerator $16$) are  chosen so
that for  $d=4$ we have $\mathbb{V}(4)=\mathbb{V}_0(4)=1$.

The asymptotic expansion of $\mathbb V_0(d)$ at $d=0$
shows that $\mathbb V(d)<\mathbb V_0(d)$
for $0<d\le d_0$, where  $d_0$ is sufficiently small.

Using  the expansions at $\lambda=\infty$ in
\eqref{d(lambda)0infty} and \eqref{lambda=0and infty}
we find that
$$
\mathbb{V}(d)=1-\frac{(4-d)^2}{16}+O_{d\to 4}((4-d)^3).
$$
Since
$$
\mathbb{V}_0(d)=1-\left(1-\frac1\pi\right)\frac{(4-d)^2}{16}+O_{d\to 4}((4-d)^3),
$$
it follows that
$
\mathbb V(d)\le V_0(d)
$
for  $d\in[4-d_1,4]$ for a small $d_1>0$.
Corresponding to $[d_0,d_1]$ is the {\it finite}
interval $[\lambda_0,\lambda_1]$ on which
computer calculations show that the inequality
$\mathbb V(d)\le V_0(d)$ still holds. This gives that
$$
\mathbb V(d)\le V_0(d)
$$
for all $d\in[0,4]$ and hence, by symmetry, for
$d\in[0,8]$.

Thus, we have proved the
following inequality.
\begin{theorem}\label{T:loglog}
For $u\in l_2(\mathbb{Z}^2)$
\begin{multline}
u(0,0)^2\le \frac1{4\pi}\frac{\|\nabla u\|^2}{\| u\|^2}
\left(1-\frac{\|\nabla u\|^2}{8\| u\|^2}\right)
\left(\ln\frac{16}
{\frac{\|\nabla u\|^2}{\| u\|^2}
\left(8-\frac{\|\nabla u\|^2}{\| u\|^2}\right)}+\right.\\
+\left.\ln\left(1+\ln\frac{16}
{\frac{\|\nabla u\|^2}{\| u\|^2}
\left(8-\frac{\|\nabla u\|^2}{\| u\|^2}\right)}\right)+2\pi\right),
\end{multline}
where the constants in front of logarithms  and $2\pi$ are
sharp. The inequality saturates for $u=\delta$,
otherwise the inequality is strict.
\end{theorem}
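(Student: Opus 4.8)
The plan is to reduce \ref{T:loglog}, by homogeneity, to the scalar comparison $\mathbb{V}(d)\le\mathbb{V}_0(d)$ that was set up in the paragraphs preceding the statement, and then to read off the two sharpness assertions from the asymptotic expansions recorded there. Since $u(0,0)^2$, $\|u\|^2$ and $\|\nabla u\|^2$ are all homogeneous of the same degree under $u\mapsto cu$, it suffices to treat $\|u\|^2=1$; then $d:=\|\nabla u\|^2\in(0,8)$ and the right-hand side of the asserted inequality is exactly
\[
\mathbb{V}_0(d)=\frac1{4\pi}\,\frac{d(8-d)}8\left(\ln\frac{16}{d(8-d)}+\ln\Bigl(1+\ln\frac{16}{d(8-d)}\Bigr)+2\pi\right).
\]
By the definition \eqref{V(d)2} of $\mathbb{V}(d)$ one has $u(0,0)^2\le\mathbb{V}(d)$ for every $u$ with $\|u\|=1$, so the whole theorem is equivalent to $\mathbb{V}(d)\le\mathbb{V}_0(d)$ for all $d\in(0,8)$.

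To prove $\mathbb{V}(d)\le\mathbb{V}_0(d)$ I would split $(0,8)$ into three pieces, using the representation $\mathbb{V}(d)=f^2(\lambda(d))/g(\lambda(d))$ from Theorem~\ref{T:V(d)2}, where $\lambda(d)$ is the inverse of the continuous, strictly increasing (hence invertible) function $d(\lambda)$ of \eqref{d(lambda)2}. Near $d=0$: invert $d(\lambda)=(5\ln2-\ln\lambda-1)\lambda+O((\lambda\ln\lambda)^2)$ via the $W_{-1}$ branch of the Lambert function, substitute the resulting expansion of $\lambda(d)$ into the expansion of $f^2/g$, and obtain $\mathbb{V}(d)=\frac1{4\pi}d\bigl(-\ln d+\ln(1-\ln d)+\gamma+o(1)\bigr)$ with $\gamma=5\ln2+1<2\pi$; comparing term by term with the $d\to0$ expansion of $\mathbb{V}_0(d)$ gives $\mathbb{V}(d)<\mathbb{V}_0(d)$ on some $(0,d_0]$. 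Near $d=4$: the $\lambda\to\infty$ expansions yield $\mathbb{V}(d)=1-\frac{(4-d)^2}{16}+O((4-d)^3)$ whereas $\mathbb{V}_0(d)=1-\bigl(1-\tfrac1\pi\bigr)\frac{(4-d)^2}{16}+O((4-d)^3)$, so $\mathbb{V}(d)\le\mathbb{V}_0(d)$ on $[4-d_1,4]$; the symmetry $\mathbb{V}(d)=\mathbb{V}(8-d)$ of Proposition~\ref{P:symm2} (which is \eqref{Vsymm2}) together with the manifest symmetry $\mathbb{V}_0(d)=\mathbb{V}_0(8-d)$ extends this to $[4,4+d_1]$ and to a neighbourhood of $d=8$. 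On the remaining compact interval — the image $[\lambda_0,\lambda_1]$ of $[d_0,4-d_1]$ under $\lambda(\cdot)$, which is finite by continuity — one verifies $\mathbb{V}(d)\le\mathbb{V}_0(d)$ by direct numerical evaluation of \eqref{V(d)2sol} and \eqref{d(lambda)2}; together with the symmetric half this yields $\mathbb{V}(d)\le\mathbb{V}_0(d)$ on all of $(0,8)$, and in fact $\mathbb{V}(d)<\mathbb{V}_0(d)$ for every $d\ne4$ (the two functions agree only at $d=4$).

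For the sharpness statement I would argue that the $d\to0$ expansion of $\mathbb{V}(d)$ already pins down the displayed constants: testing any inequality of the shape $u(0,0)^2\le C_1\frac{\|\nabla u\|^2}{\|u\|^2}\bigl(1-\tfrac{\|\nabla u\|^2}{8\|u\|^2}\bigr)\bigl(\ln(\cdots)+c_2\ln(1+\ln(\cdots))+c_3\bigr)$ against the extremal family $u=G_{\lambda(d)}$ as $d\to0$ forces $C_1\ge\frac1{4\pi}$ and, once $C_1=\frac1{4\pi}$, forces the $\ln\ln$-coefficient $c_2\ge1$ — so neither the prefactor nor the coefficients in front of the logarithms can be decreased. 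The additive constant $2\pi$ is optimal for a different reason: with the remaining constants fixed at their stated values there is no slack at $d=4$, since $\mathbb{V}(4)=1$ by Theorem~\ref{T:V(d)2} while $\mathbb{V}_0(4)=1$ holds \emph{only because} $2\pi$ (rather than the smaller $\gamma$) was inserted; any $c_3<2\pi$ would make the right-hand side at $d=4$ strictly below $\mathbb{V}(4)$. Finally, for $u=\delta$ one has $\|\delta\|^2=1$ and $\|\nabla\delta\|^2=4$, so $d=4$ and $\mathbb{V}(4)=1=\delta(0,0)^2$: equality holds. For any $u$ with $\|u\|=1$ and $\|\nabla u\|^2\ne4$ we get $u(0,0)^2\le\mathbb{V}(d)<\mathbb{V}_0(d)$, and for $u$ with $d=4$ but $u$ not proportional to the unique extremal $\delta$ of \eqref{V(d)2} the first inequality $u(0,0)^2<\mathbb{V}(4)$ is already strict; hence the inequality is strict unless $u$ is a multiple of $\delta$.

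The genuinely non-routine ingredient is the uniform bound $\mathbb{V}(d)\le\mathbb{V}_0(d)$ on all of $[0,8]$: it is unavoidably a case analysis, and its delicate point is the two-term asymptotic matching at $d=0$, which requires the Lambert-$W$ inversion together with enough control of the remainders in \eqref{V(d)2sol}--\eqref{d(lambda)2} to see that the strict inequality persists, followed by a Taylor comparison at $d=4$ and a numerical check on the intermediate compact interval. Everything else — the reduction by homogeneity, the two symmetries, and the identification of the optimal constants — is bookkeeping on top of Theorems~\ref{T:theta2} and~\ref{T:V(d)2}.
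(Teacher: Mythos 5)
Your proposal is correct and follows essentially the same route as the paper: reduce by homogeneity to the comparison $\mathbb{V}(d)\le\mathbb{V}_0(d)$ on $(0,8)$, establish it via the Lambert-$W$ inversion and two-term asymptotics at $d=0$, a Taylor comparison at $d=4$, the symmetry $\mathbb{V}(d)=\mathbb{V}(8-d)$, and a numerical verification on the remaining compact interval, exactly as in the paragraphs preceding the theorem. Your additional spelling-out of the sharpness of the constants (testing with $G_{\lambda(d)}$ as $d\to0$ and with $\delta$ at $d=4$) and of the strictness via uniqueness of the extremal at $d=4$ is consistent with, and slightly more explicit than, what the paper records.
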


\setcounter{equation}{0}
\section{3D case}\label{S:3D}

In the three-dimensional case the following result
holds which is somewhat similar to the classical Sobolev inequality for the limiting exponent.
\begin{theorem}\label{T:theta3}
Let $u\in l^2(\mathbb{Z}^3)$. Then for any $\theta\in[0,1]$
\begin{equation}\label{3D}
u(0,0,0)^2\le \mathrm{K}_3(\theta)\|u\|^{2\theta}
\|\nabla u\|^{2(1-\theta)},
\end{equation}
where $\mathrm{K}_3(\theta)<\infty$ for $\theta\in[0,1]$,
and its sharp value for $\theta\in(0,1)$ is given by
\begin{equation}\label{in(0,1)}
\mathrm{K}_3(\theta)=\frac1{2\pi^2}
\frac{1}{\theta^\theta(1-\theta)^{1-\theta}}
\max_{\lambda>0}\lambda^\theta
\int_0^\pi\frac{K\left(
\frac{1}{\frac\lambda 4+1+\sin^2\frac x2}\right)}
{\frac\lambda 4+1+\sin^2\frac x2}dx,
\end{equation}
and  there exists a unique extremal
element, which belongs to $ l^2(\mathbb{Z}^3)$.

In the limiting case $\theta=0$ inequality~\eqref{3D}
still holds:
\begin{equation}\label{3D0}
u(0,0,0)^2\le \mathrm{K}_3(0)
\|\nabla u\|^{2},
\end{equation}
where
\begin{equation}\label{K3}
\aligned
\mathrm{K}_3(0)=\frac1{(2\pi)^3}\int_{\mathrm{T}^3}
\frac{dx}{4(\sin^2\frac {x_1}2+\sin^2\frac {x_2}2+\sin^2\frac {x_3}2)}=\\=\frac{1}{2\pi^2}\cdot
\int_0^\pi\frac{K\left(\frac{1}{1+\sin^2\frac x2}\right)}{1+\sin^2\frac x2}dx=
\frac{4.9887\dots}{2\pi^2}=0.2527\dots.
\endaligned
\end{equation}
The constant is sharp and there exists a unique
extremal element, which does not lie in $l^2(\mathbb{Z}^3)$,
but rather in $l^\infty_0(\mathbb{Z}^3)$,
but whose gradient does belong to $l^2(\mathbb{Z}^3)$.
Furthermore, as we already mentioned in \S\ref{S:Intro},
we have the closed form formula for $\mathrm{K}_3(0)$ (see
\cite{{lattice_sums}})
$$
\mathrm{K}_3(0)=\frac{\sqrt{6}}{24(2\pi)^3}
\Gamma(\tfrac1{24})\Gamma(\tfrac5{24})\Gamma(\tfrac7{24})\Gamma(\tfrac{11}{24}).
$$
\end{theorem}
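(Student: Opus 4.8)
The plan is to mimic the structure of the 1D and 2D cases, since Theorem~\ref{T:theta3} is the three-dimensional analogue of Theorems~\ref{T:theta} and~\ref{T:theta2}. The backbone is the same Green's-function inequality \eqref{additive}: for the positive operator $\mathbb{A}(\lambda)=-\Delta+\lambda$ on $l^2(\mathbb{Z}^3)$ (with $\lambda>0$), writing $G_\lambda$ for its Green's function, Cauchy--Schwarz gives $u(0,0,0)^2=(\delta,u)^2=(\mathbb{A}(\lambda)G_\lambda,u)^2\le G_\lambda(0,0,0)\,(\mathbb{A}(\lambda)u,u)$, with equality iff $u=\mathrm{const}\cdot G_\lambda$. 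Then, exactly as in \eqref{sup}, one optimizes over $\lambda$ after setting $\lambda=\frac{\theta}{1-\theta}\frac{\|\nabla u\|^2}{\|u\|^2}$, which yields the representation
\[
\mathrm{K}_3(\theta)=\frac1{\theta^\theta(1-\theta)^{1-\theta}}\sup_{\lambda>0}\bigl(\lambda^\theta G_\lambda(0,0,0)\bigr).
\]
So the whole theorem reduces to (i) computing $G_\lambda(0,0,0)$ in a usable closed/integral form, (ii) analyzing the function $\lambda\mapsto\lambda^\theta G_\lambda(0,0,0)$ to show the sup is a finite maximum for $0<\theta<1$ and remains finite at $\theta=0$, and (iii) reading off the extremal.

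\textbf{Computing $G_\lambda(0,0,0)$.} As in Lemma~\ref{L:Green2(0,0)}, the discrete Fourier transform gives $\widehat g_\lambda(x)=\bigl(\lambda+4\sum_{i=1}^3\sin^2\frac{x_i}2\bigr)^{-1}$, hence
\[
G_\lambda(0,0,0)=\frac1{(2\pi)^3}\int_{\mathbb T^3}\frac{dx}{\lambda+4(\sin^2\frac{x_1}2+\sin^2\frac{x_2}2+\sin^2\frac{x_3}2)}.
\]
Integrating out $x_3$ by \eqref{integral} collapses one dimension, and integrating out $x_2$ by the same elliptic-integral reduction used in \eqref{gradshtein} (formula $3.147.7$ of \cite{Gr-Ry}) produces
\[
G_\lambda(0,0,0)=\frac1{2\pi^2}\int_0^\pi\frac{K\!\left(\frac{1}{\frac\lambda4+1+\sin^2\frac x2}\right)}{\frac\lambda4+1+\sin^2\frac x2}\,dx,
\]
which is exactly the integral appearing in \eqref{in(0,1)} and \eqref{K3}. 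Convergence of the $\mathbb T^3$ integral for $\lambda\ge0$ — the key point distinguishing $d\ge3$ from $d=1,2$ — follows because near the origin $\sin^2\frac{x_1}2+\sin^2\frac{x_2}2+\sin^2\frac{x_3}2\sim\frac14|x|^2$, and $|x|^{-2}$ is integrable in $\mathbb R^3$; in particular $G_0(0,0,0)=\mathrm{K}_3(0)$ is a finite constant, which is what makes $\theta=0$ admissible. One should also record the small-$\lambda$ and large-$\lambda$ behavior: $G_\lambda(0,0,0)\to \mathrm{K}_3(0)$ as $\lambda\to0^+$ (monotone decrease, by dominated convergence) and $G_\lambda(0,0,0)=\frac1\lambda+O(\lambda^{-2})$ as $\lambda\to\infty$.

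\textbf{The optimization and extremals.} For $0<\theta<1$, the function $\varphi(\lambda)=\lambda^\theta G_\lambda(0,0,0)$ satisfies $\varphi(0^+)=0$ and $\varphi(\lambda)\sim\lambda^{\theta-1}\to0$ as $\lambda\to\infty$, so the continuous positive function $\varphi$ attains its supremum at some $\lambda_*\in(0,\infty)$, giving \eqref{in(0,1)}; the extremal is $u_{\lambda_*}=G_{\lambda_*}$, and sharpness is checked as in Theorem~\ref{T:theta}: $\varphi'(\lambda_*)=0$ forces $\lambda_*f'(\lambda_*)+\theta f(\lambda_*)=0$, which by \eqref{fgh} (valid verbatim here) gives $d(\lambda_*)=\frac{1-\theta}{\theta}\lambda_*$, so the substitution $\lambda=\lambda_*$ is consistent and both Cauchy--Schwarz steps become equalities. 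Since $G_{\lambda_*}\in l^2(\mathbb Z^3)$ by the argument of Lemma~\ref{L:Green2(0,0)}, the extremal lies in $l^2$. For $\theta=0$, $\mathrm{K}_3(0)=\sup_{\lambda>0}G_\lambda(0,0,0)=G_0(0,0,0)$ by the monotonicity above, but this sup is \emph{not} attained at any $\lambda>0$; the formal extremal is $G_0$, the solution of $-\Delta G_0=\delta$, which decays like $|n|^{-1}$ and hence lies in $l^\infty_0(\mathbb Z^3)$ but not $l^2(\mathbb Z^3)$, while $\nabla G_0\in l^2$ — this is the content of the last sentences of the statement. The closed-form evaluation in terms of $\Gamma(\tfrac1{24})\Gamma(\tfrac5{24})\Gamma(\tfrac7{24})\Gamma(\tfrac{11}{24})$ is quoted directly from \cite{lattice_sums} and needs no proof here.

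\textbf{Main obstacle.} The genuinely nontrivial analytic point is establishing that the extremal at $\theta=0$ is realized by $G_0$ in the limiting sense claimed, i.e.\ constructing a sequence $u_k$ (truncations of $G_0$) with $u_k(0,0,0)^2/\|\nabla u_k\|^2\to\mathrm{K}_3(0)$ while $\|u_k\|\to\infty$, and arguing that no genuine $l^\infty_0$ maximizer does better — equivalently, that strict inequality holds for every $u$ not proportional to $G_0$. This requires care because $G_0\notin l^2$, so the clean Cauchy--Schwarz equality case of \eqref{additive} does not literally apply; one approximates by $\lambda\to0^+$ and uses the strict monotonicity and continuity of $G_\lambda(0,0,0)$ together with uniqueness of the extremal for each $\lambda>0$. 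Everything else — the Fourier computation, the elliptic-integral reduction, and the variational argument for $0<\theta<1$ — is routine, following the templates already set in Sections~\ref{S:1D} and~\ref{S:2D}.
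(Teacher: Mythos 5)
Your proposal follows essentially the same route as the paper: the Green's-function/Cauchy--Schwarz bound, the discrete Fourier computation of $G_\lambda(0,0,0)$ with the elliptic-integral reduction (Gradshteyn--Ryzhik $3.147.7$), and the optimization of $\lambda^\theta G_\lambda(0,0,0)$ with extremal $G_{\lambda_*}$ for $0<\theta<1$. The ``main obstacle'' you flag at $\theta=0$ is handled in the paper more simply than you anticipate: one sets $\lambda=0$ in the basic inequality (legitimate since $G_\lambda(0,0,0)$ is continuous at $\lambda=0$, or by letting $\lambda\to0^+$) and verifies $\nabla G_0\in l^2(\mathbb{Z}^3)$ via $h(\lambda)=f(\lambda)+\lambda f'(\lambda)$, i.e.\ by checking the corresponding Fourier integral stays finite at $\lambda=0$, so $G_0$ itself serves as the extremal and no truncation/limiting construction is required.
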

\begin{figure}[htb]
\centerline{\psfig{file=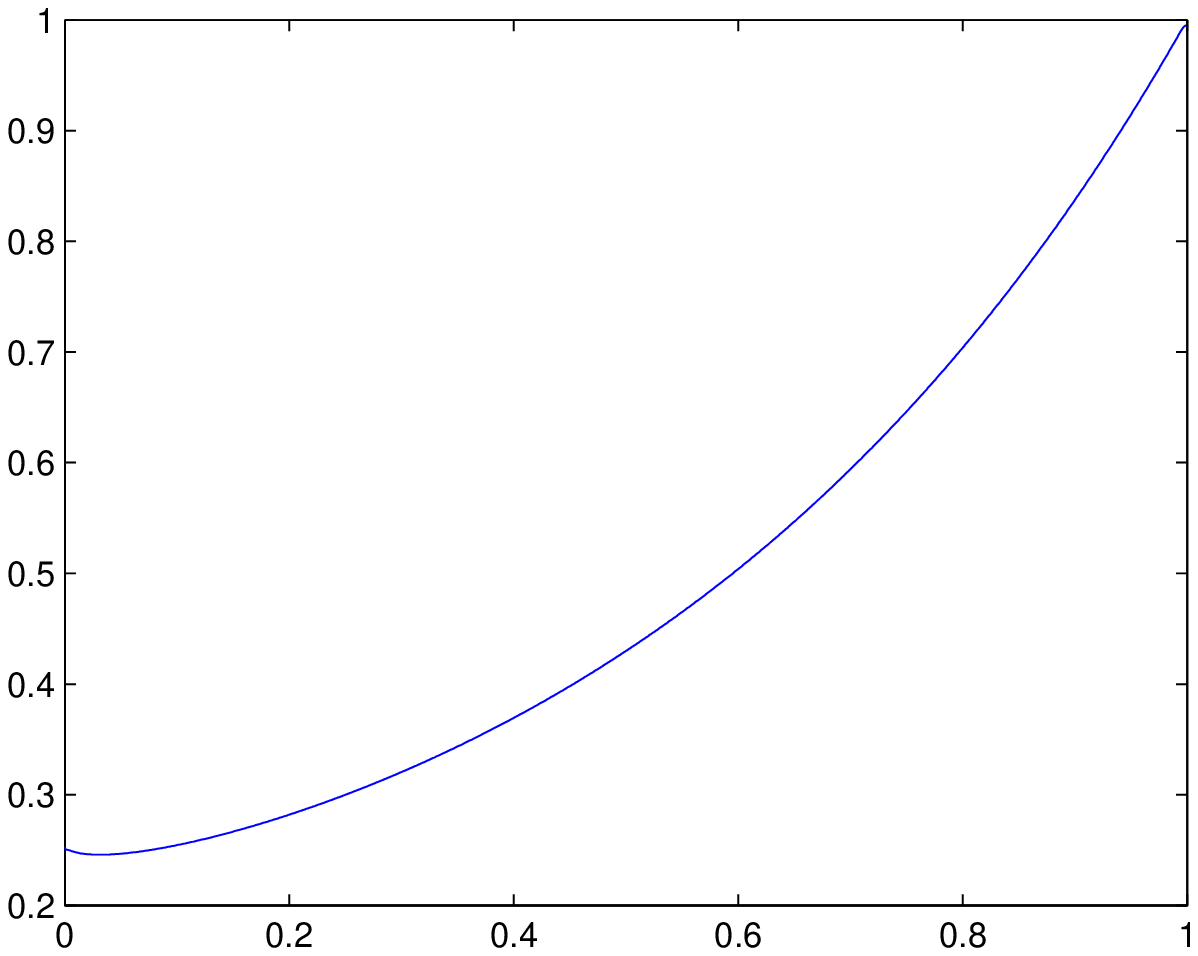,width=6.4cm,height=5cm,angle=0}
\psfig{file=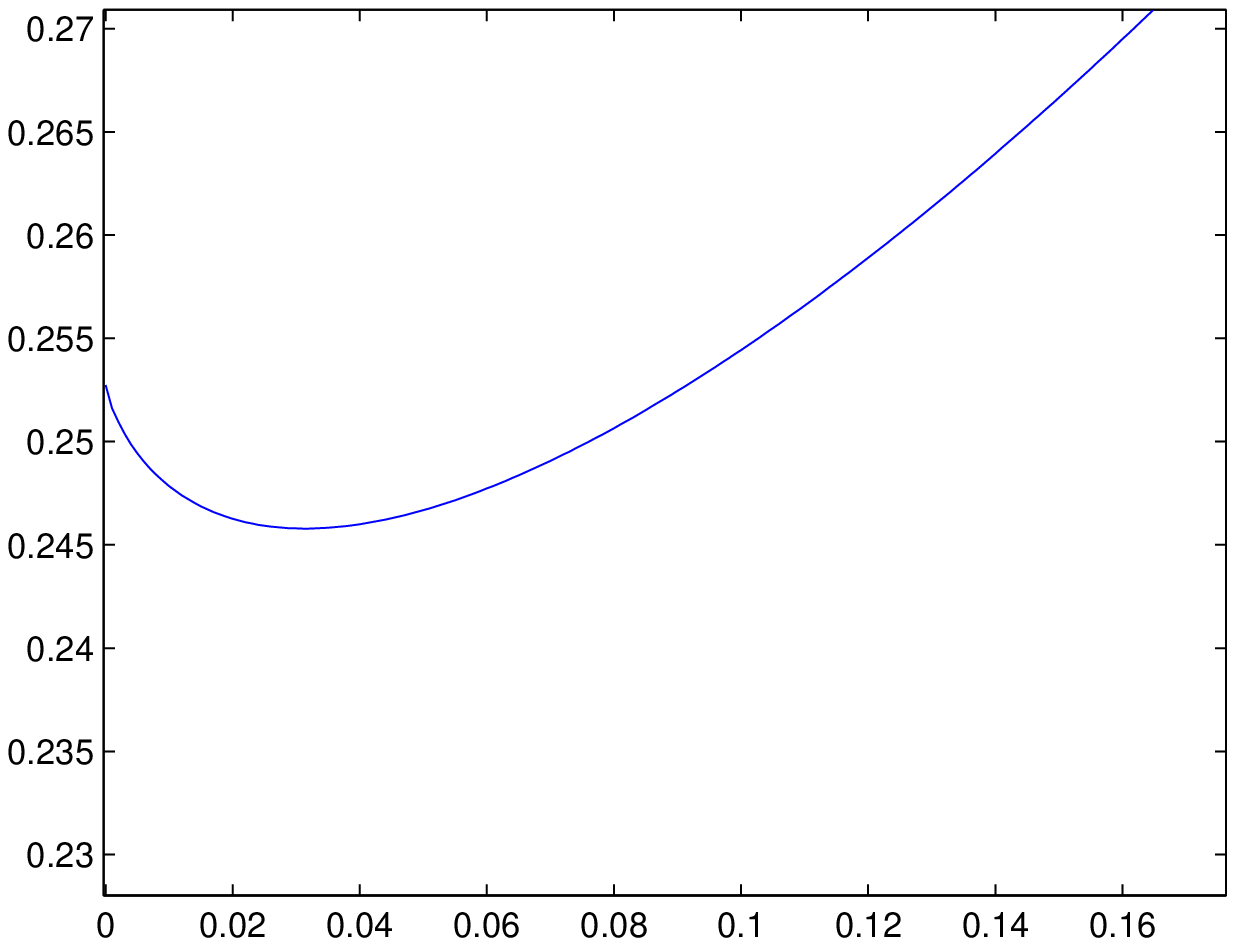,width=6.4cm,height=5cm,angle=0} }
\caption{Graph of $\mathrm{K}_3(\theta)$ on
the  interval $\theta\in[0,1]$ (left)with a closer look at its behavior
near $\theta=0$ (right).} \label{Fig:3D}
\end{figure}
\begin{proof}
We have to find the fundamental solution
$G_\lambda(k,l,m)$ of the equation
\begin{equation}\label{A(lambda)3D}
\mathbb{A}(\lambda)G_\lambda=
(\mathrm{D}_1^*\mathrm{D}_1+
\mathrm{D}_2^*\mathrm{D}_2+
\mathrm{D}_3^*\mathrm{D}_3+\lambda)G_\lambda=\delta.
\end{equation}
Similarly to the 1D and 2D cases we
find that the function
$$
\widehat{g}_\lambda(x,y,z):=\sum_{k,l,m=-\infty}^\infty
G_\lambda(k,l,m)e^{ikx+ily+imz},
$$
satisfies
$$
\widehat{g}_\lambda(x,y,z)=\frac{\frac14}{\frac\lambda 4+
\sin^2\frac x2+\sin^2\frac y2+\sin^2\frac z2}\,.
$$
As before we have the inequality
\begin{equation}\label{333}
u(0,0,0)^2\le G_\lambda(0,0,0)
(\|\nabla u\|^2+\lambda\|u\|^2),
\end{equation}
which saturates for $u=\mathrm{const}\cdot G_\lambda$.

For $\lambda>0$ as in the 1D and 2D cases we have
$\widehat{g}_\lambda\in L_2(\mathbb{T}^3)$, and, hence,
$G_\lambda\in l^2(\mathbb{Z}^3)$ for $\lambda>0$.
In particular, using \eqref{gradshtein} we find
\begin{equation}\label{G(0)3D}
\aligned
G_\lambda(0,0,0)=\frac1{8\pi^3}\int_{-\pi}^{\pi}\int_{-\pi}^{\pi}\int_{-\pi}^{\pi}
\widehat{g}_\lambda(x,y,z)dxdydz=\\=
\frac1{2\pi^2}
\int_0^\pi\frac{K\left(
\frac{1}{\frac\lambda 4+1+\sin^2\frac x2}\right)}
{\frac\lambda 4+1+\sin^2\frac x2}dx.
\endaligned
\end{equation}
However, unlike the previous two cases,
now
$\widehat{g}_\lambda$ is integrable for all
$\lambda\ge0$ including $\lambda=0$:
$\widehat{g}_\lambda\in L_1(\mathbb{T}^3)$  for
$\lambda\ge0$. Therefore
the Green's function $G_0$
is well defined and belongs to $l^\infty_0(\mathbb{Z}^3)$.
We point out, however, that since
$\widehat{g}_0\notin L_2(\mathbb{T}^3)$, it follows that
$G_0\notin l^2(\mathbb{Z}^3)$.

For $\lambda=0$,
the integrand has only  a logarithmic
singularity at  $x=0$ and
 we obtain
$$
\aligned
G_0(0,0,0)=
\frac1{2\pi^2}
\int_0^\pi\frac{K\left(
\frac{1}{1+\sin^2\frac x2}\right)}
{1+\sin^2\frac x2}dx.
\endaligned
$$
We now see that $f(\lambda):=G_\lambda(0,0,0)$
is continuous on $\lambda\in[0,\infty)$ and is of the order
$1/\lambda$ at infinity. This gives that
for $\theta\in(0,1)$ the function
$\lambda^\theta f(\lambda)$ vanishes both at the origin
and at infinity. Hence, it attains its maximum
at a (generically) unique point $\lambda_*(\theta)$,
and the claim of the theorem concerning the case
$\theta\in(0,1)$ follows in exactly the same way as in
Theorem~\ref{T:theta}.

Setting $\lambda=0$ in \eqref{333} we obtain~\eqref{3D0}
with~\eqref{K3}. It remains to verify that $\nabla G_0\in l^2(\mathbb{Z}^3)$.
To see this we use notation \eqref{fgh1} and Lemma~\ref{L:relations}.
We obtain
$$
\aligned
\|\nabla G_\lambda\|^2=h(\lambda)=f(\lambda)+\lambda f'(\lambda)=\\=
\frac1{8\pi^3}\int_{-\pi}^{\pi}\int_{-\pi}^{\pi}\int_{-\pi}^{\pi}
(\widehat{g}_\lambda(x,y,z)+\lambda\widehat{g}_\lambda(x,y,z)'_\lambda) dxdydz=\\=
\frac1{8\pi^3}\int_{-\pi}^{\pi}\int_{-\pi}^{\pi}\int_{-\pi}^{\pi}
\frac{\frac14(\sin^2\frac x2+\sin^2\frac y2+\sin^2\frac z2)}
{(\frac\lambda 4+\sin^2\frac x2+\sin^2\frac y2+\sin^2\frac z2)^2}dxdydz.
\endaligned
$$
Since the integral on right-hand side is bounded for $\lambda=0$
we have $\|\nabla G_0\|^2<\infty$. Finally,
$G_\lambda$ has strictly positive elements for $\lambda\ge0$,
since we have as before the maximum principle. In the case
when $\lambda=0$ we use, in addition, the fact that
$G_0\in l^\infty_0$.
The proof is complete.
\end{proof}

The graph of\ \ $\mathrm{K}_3(\theta)$ is shown in
Fig.~\ref{Fig:3D}.

\begin{remark}\label{R:d>3}
{\rm
Higher dimensional cases are treated similarly,
in particular, for $d\ge3$ and $\theta=0$
\begin{equation}\label{Kd(0)}
u(0)^2\le \mathrm{K}_d(0)
\|\nabla u\|^{2},\
\mathrm{K}_d(0)=
\frac1{(2\pi)^d}\int_{\mathrm{T}^d}
\frac{dx}{4(\sin^2\frac {x_1}2+\dots+\sin^2\frac {x_d}2)}\,.
\end{equation}
In \S\ref{S:Appl-Carlson} we give an independent elementary proof of
this inequality.
}
\end{remark}

\setcounter{equation}{0}
\section{Higher order difference operators}\label{S:2nd}

The method developed above admits  a straight forward generalization
to higher order difference operators. We consider the
second-order  operator in the one dimensional case:
\begin{equation}\label{1D2theta}
   u(0)^2\le \mathrm{K}_{1,2}(\theta)\|u\|^{2\theta}
   \|\Delta u\|^{2(1-\theta)},
\end{equation}
where
$$
-\Delta u(n):=\D^*\D u(n)=-\bigl(u(n+1)-2u(n)+u(n-1)\bigr).
$$
Accordingly, the operator $\mathbb{A}(\lambda)$ is
\begin{equation}\label{1D2Green}
\mathbb{A}(\lambda)=
\left\{
  \begin{array}{ll}
\phantom{-}\Delta^2+\lambda, & \hbox{for $\lambda>0$;} \\
-\Delta^2-\lambda, & \hbox{for $\lambda<-16$.}
  \end{array}
\right.
\end{equation}
Here
$$
\Delta^2u(n)=u(n+2)-4u(n+1)+6u(n)-4u(n-1)+u(n-2).
$$
As before, we have to find the Green's function $G_\lambda$ solving
 $\mathbb{A}(\lambda)G_\lambda=\delta$.
Furthermore,  for finding $\mathrm{K}_{1,2}(\theta)$ it suffices to
solve this equation  for $\lambda>0$. Setting
$$
\widehat{g}_\lambda(x):=\sum_{n=-\infty}^\infty
G_\lambda(n)e^{inx},
$$
and arguing as in Lemma~\ref{L:explicit} we get from
\eqref{1D2Green}
\begin{equation}\label{G4}
\aligned
1=\widehat{g}_\lambda(x)(\lambda+e^{-i2x}-4e^{-ix}
+6-4e^{ix}+e^{i2x})=\\=
\widehat{g}_\lambda(x)\bigl(\lambda+(e^{ix/2}-e^{-ix/2})^4\bigr)=
\widehat{g}_\lambda(x)\left(\lambda+16\sin^4\frac x2\right),
\endaligned
\end{equation}
so that
\begin{equation}\label{1D2Green(0)}
G_\lambda(0)=
\frac1{2\pi}\int_{-\pi}^\pi\frac{dx}{\lambda+16\sin^4\frac x2}=
\frac{\sqrt{2}}2\frac1{\lambda^{3/4}}
\sqrt{\frac{\sqrt{\lambda+16}+\sqrt{\lambda}}{\lambda+16}}\,.
\end{equation}
Now a word for word repetition of the argument in
Theorem~\ref{T:theta} gives that
$$
\mathrm{K}_{1,2}(\theta)=
\frac{1}{\theta^\theta(1-\theta)^{1-\theta}}\cdot
\sup_{\lambda>0}\lambda^{\theta} G_\lambda(0).
$$
Therefore we see from~\eqref{1D2Green(0)} that
$\mathrm{K}_{1,2}(\theta)<\infty$ if and only if
$$
\frac34\le\theta\le1.
$$
For example, for $\theta=3/4$ supremum is the maximum attained
at $\lambda_*(3/4)=16/3$, giving
$$
\mathrm{K}_{1,2}(3/4)=
\frac{4}{3^{3/4}}\cdot
\lambda^{3/4} G_\lambda(0)\vert_{\lambda_*(3/4)=\frac{16}3}=
\frac{\sqrt{2}}2.
$$
We only mention that in the general case
\begin{equation}\label{Maple}
\aligned
\lambda_*(\theta)=
\mathrm{argmax}_{\lambda>0}
\,\lambda^{\theta-3/4}
\sqrt{\frac{\sqrt{\lambda+16}+\sqrt{\lambda}}{\lambda+16}}
=\\
=\frac{64\theta-32\theta^2-29+
\sqrt{32\theta-23}}
{2\theta^2-5\theta+3},
\endaligned
\end{equation}
however, the corresponding substitution produces
a  long (but explicit) formula for $\mathrm{K}_{1,2}(\theta)$,
 and instead we present in Fig.\ref{fig:K2(theta12)}
 the graph
of the sharp constant $\mathrm{K}_{1,2}(\theta)$,
where $\mathrm{K}_{1,2}(3/4)=\sqrt{2}/2$ and
$\mathrm{K}_{1,2}(1)=1$.
\begin{figure}[htb]
\centerline{\psfig{file=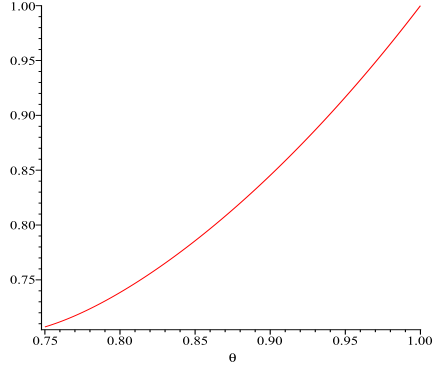,width=6cm,height=5cm,angle=0}}
\caption{Graph of $\mathrm{K}_{1,2}(\theta)$ on $\theta\in [3/4,1]$.}
\label{fig:K2(theta12)}
\end{figure}

Finally, it is possible to find $G_\lambda(n)$ explicitly. In fact,
the free recurrence relation $ \Delta^2G_\lambda+\lambda G_\lambda=0$
has the characteristic equation
$$
q^2-4q+(6+\lambda)-4q^{-1}+q^{-2}=0,
$$
or $(q^{1/2}-q^{-1/2})^4=-\lambda$, which decomposes into two quadratic
equations
$$
q+\frac 1q-2=i\sqrt{\lambda}\quad\text{and}\quad q+\frac1q-2=-i\sqrt{\lambda},
$$
with four roots $q_1,q_2,q_3,q_4$, where $q_2=1/q_1$, $q_3=\bar q_2$, $q_4=\bar q_1$,
where
\begin{equation}\label{q(lambda)}
q_1=q(\lambda)=1-\frac{\lambda^{1/4}\sqrt{\sqrt{\lambda+16}-\sqrt{\lambda}}}
{2\sqrt{2}}+i\left(\frac{\sqrt{\lambda}}2-
\frac{\lambda^{1/4}\sqrt{\sqrt{\lambda+16}+\sqrt{\lambda}}}
{2\sqrt{2}}\right).
\end{equation}
Since $|q(\lambda)|<1$ for $\lambda>0$, it follows that any symmetric $l_2$-solution
of \eqref{1D2Green} is of the form $a(\lambda)q(\lambda)^{|n|}+b(\lambda)\bar q(\lambda)^{|n|}$,
and since, in addition $G_\lambda(n)$ is real, we have
$$
G_\lambda(n)=a(\lambda)q(\lambda)^{|n|}+\bar a(\lambda)\bar q(\lambda)^{|n|}.
$$
Setting $n=0$ and $n=1$ we obtain a linear system for $a(\lambda)$
$$
\aligned
&G_\lambda(0)=2\Re a(\lambda)\\
&G_\lambda(1)=a(\lambda)q(\lambda)+\bar a(\lambda)\bar q(\lambda),
\endaligned
$$
where $G_\lambda(0)$ is given in \eqref{1D2Green(0)} and
$$
G_\lambda(1)=
\frac1{\pi}\int_{0}^\pi\frac{\cos x\,dx}{\lambda+16\sin^4\frac x2}=
\frac{\sqrt{2}}2\frac1{\lambda^{3/4}}
\frac{\sqrt{\lambda+16}-\sqrt{\lambda}}{\sqrt{\sqrt{\lambda+16}+\sqrt{\lambda}}
\,\sqrt{\lambda+16}}\,.
$$
Solving this system we find $a(\lambda)$:
$$
a(\lambda)=
\frac{\sqrt{2}}4\frac1
{{\lambda^{3/4}}\sqrt{\lambda+16}\sqrt{\sqrt{\lambda+16}+\sqrt{\lambda}}}
\left( \sqrt{\lambda+16}+\sqrt{\lambda} +4i\right),
$$
and, consequently, the formula for $G_\lambda(n)$ with $\lambda>0$:
\begin{equation}\label{expl2ord}
\aligned
G_\lambda(n)=\frac1{\pi}\int_{0}^\pi\frac{\cos nx\,dx}{\lambda+16\sin^4\frac x2}=
\frac{\Re\left[\bigl(\sqrt{\lambda+16}+\sqrt{\lambda} +4i\bigr)\cdot q(\lambda)^{|n|}
\right]}
{\sqrt{2}\,{\lambda^{3/4}}\sqrt{\lambda+16}\sqrt{\sqrt{\lambda+16}+\sqrt{\lambda}}},
\endaligned
\end{equation}
where $q(\lambda)$ is given in~\eqref{q(lambda)}.

\begin{figure}[htb]
\centerline{\psfig{file=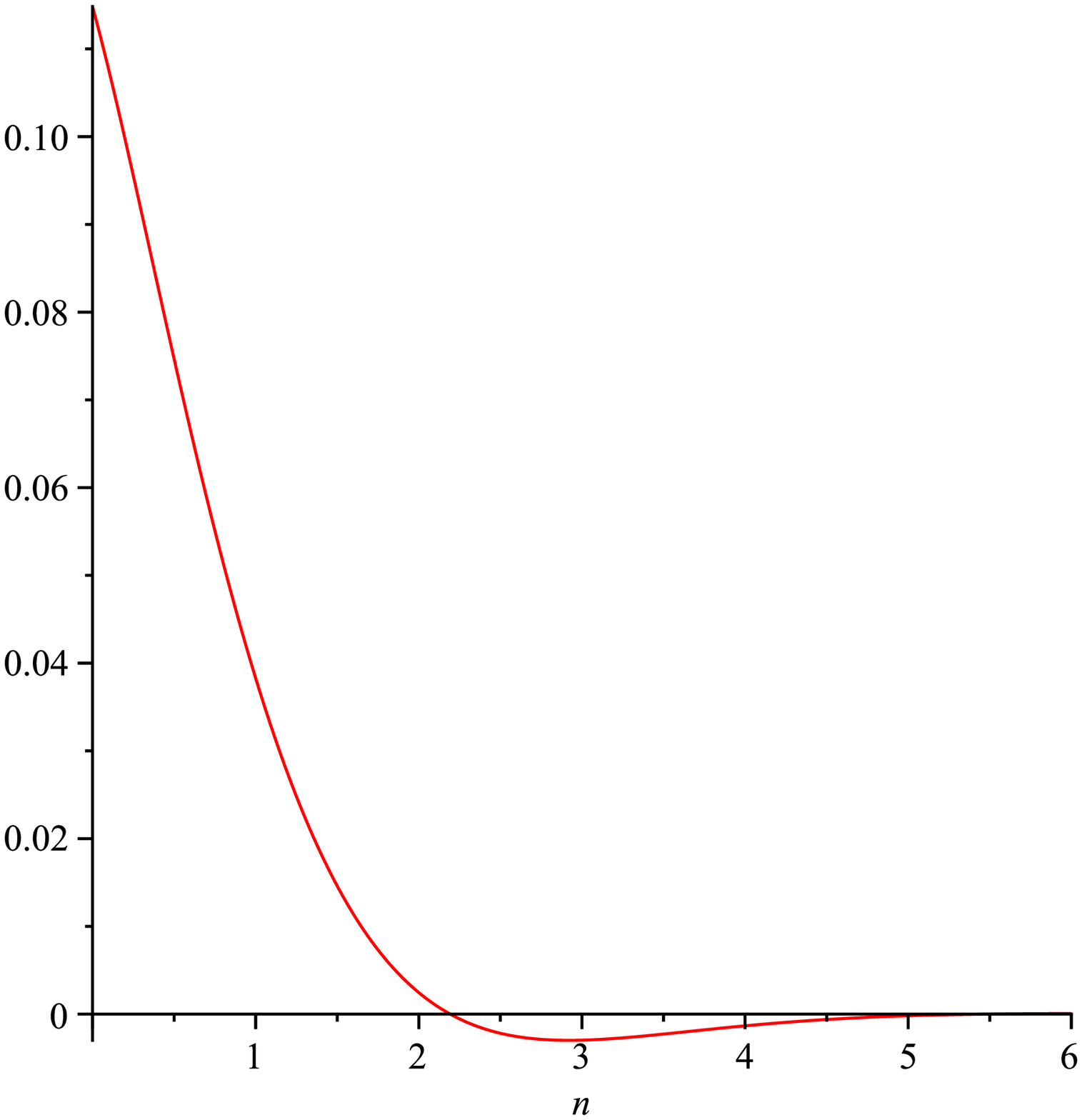,width=6.4cm,height=5cm,angle=0}
\psfig{file=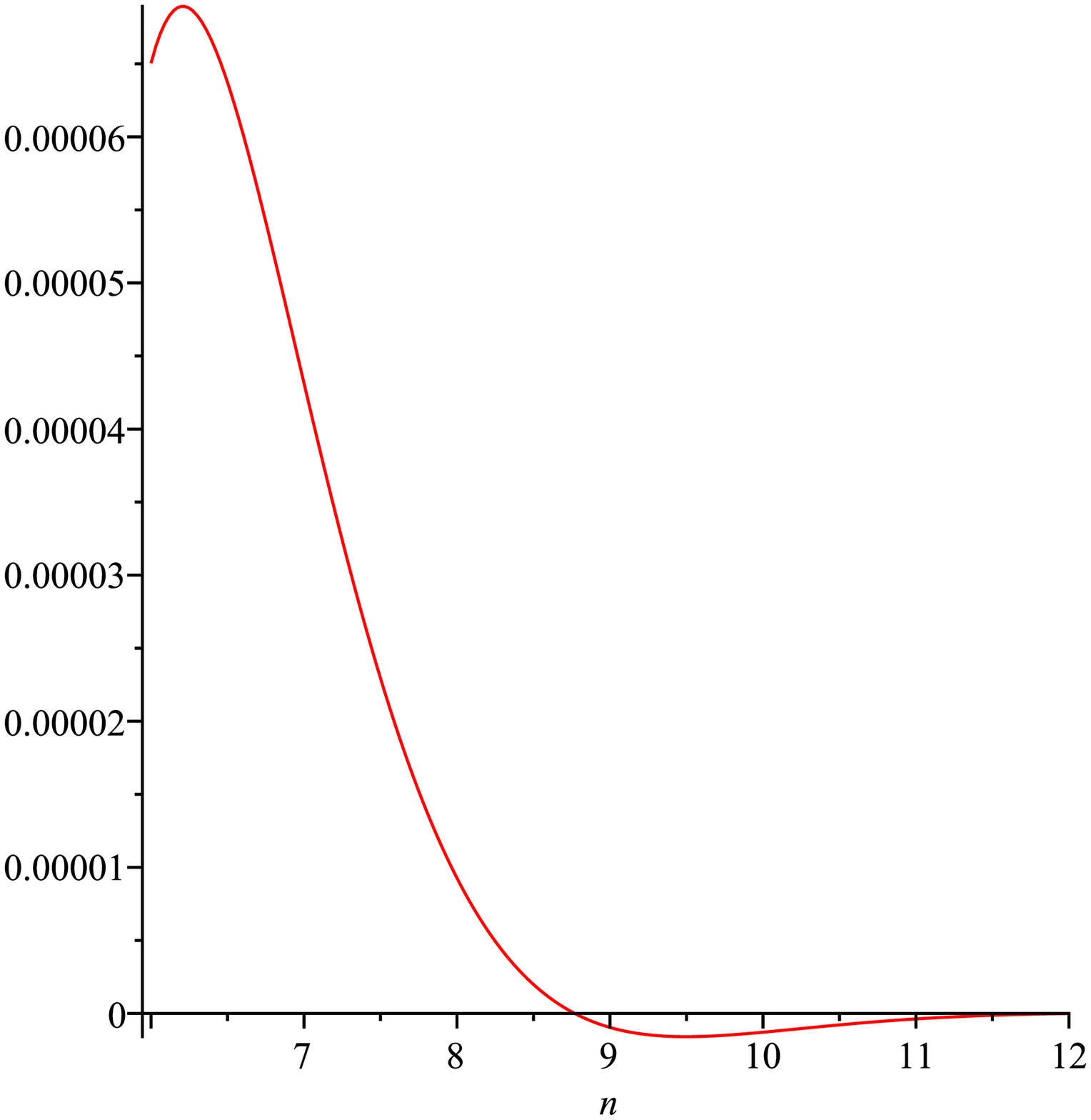,width=6.4cm,height=5cm,angle=0} }
\caption{Graph of the maximizer $G_\frac{16}3(n)$ for
$n\in[0,6]$ (left) and $n\in[6,12]$ (right)} \label{Fig:1D2ndord}
\end{figure}
\medskip

Thus, we obtain the following result.

\begin{theorem}\label{T:theta12}
Inequality~\eqref{1D2theta} holds for $\theta\in[3/4,1]$.
In particular, in the limiting case $\theta=3/4$
\begin{equation}\label{1D2theta3/4}
   u(0)^2\le \frac{\sqrt{2}}2\,\|u\|^{3/2}
   \|\Delta u\|^{1/2}.
\end{equation}
In the general case,
$$
\mathrm{K}_{1,2}(\theta)=
\frac{1}{\theta^\theta(1-\theta)^{1-\theta}}\cdot
\lambda_*(\theta)^{\theta} G_{\lambda_*(\theta)}(0),
$$
where $\lambda_*(\theta)$ is given in~\eqref{Maple}
and $G_\lambda(0)$ in \eqref{1D2Green(0)} (see also~\eqref{expl2ord}).
For $\theta\in[3/4,1)$ the unique extremal is
$u_{\lambda_*(\theta)}=G_{\lambda_*(\theta)}$.
For $\theta=1$, $\lambda_*(1)=\infty$ and $u_*=\delta$.
\end{theorem}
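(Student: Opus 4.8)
The plan is to follow verbatim the scheme of the proof of Theorem~\ref{T:theta}, now with the fourth-order operator $\mathbb{A}(\lambda)=\Delta^2+\lambda$ of \eqref{1D2Green} in place of $\mathrm{D}^*\mathrm{D}+\lambda$. First I would record that for $\lambda>0$ the operator $\mathbb{A}(\lambda)$ is positive definite, with $(\mathbb{A}(\lambda)u,u)=\|\Delta u\|^2+\lambda\|u\|^2$, and that its Green's function $G_\lambda$, solving $\mathbb{A}(\lambda)G_\lambda=\delta$, lies in $l^2(\mathbb{Z})$ because $\widehat g_\lambda(x)=1/(\lambda+16\sin^4\frac x2)\in L_2(0,2\pi)$, as in \eqref{G4}. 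The Cauchy--Schwarz argument of \eqref{additive} then gives, for every $\lambda>0$,
$$
u(0)^2=(\delta,u)^2=(\mathbb{A}(\lambda)^{1/2}G_\lambda,\mathbb{A}(\lambda)^{1/2}u)^2\le G_\lambda(0)\bigl(\|\Delta u\|^2+\lambda\|u\|^2\bigr),
$$
with equality precisely when $u$ is a scalar multiple of $G_\lambda$.

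Next I would optimise over $\lambda$ exactly as in \eqref{sup}: taking $\lambda=\frac{\theta}{1-\theta}\frac{\|\Delta u\|^2}{\|u\|^2}$ and splitting $\lambda=\lambda^\theta\lambda^{1-\theta}$ yields
$$
\mathrm{K}_{1,2}(\theta)=\frac{1}{\theta^\theta(1-\theta)^{1-\theta}}\,\sup_{\lambda>0}\psi_\theta(\lambda),\qquad \psi_\theta(\lambda):=\lambda^\theta G_\lambda(0),
$$
and inserting the explicit value \eqref{1D2Green(0)} gives $\psi_\theta(\lambda)=\frac{\sqrt2}{2}\,\lambda^{\theta-3/4}\sqrt{\frac{\sqrt{\lambda+16}+\sqrt\lambda}{\lambda+16}}$. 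The range of validity and the finiteness of the constant are then read off from the boundary behaviour of $\psi_\theta$: as $\lambda\to\infty$ one has $\psi_\theta(\lambda)\sim c\,\lambda^{\theta-1}$, while as $\lambda\to0^+$ one has $\psi_\theta(\lambda)\sim\frac{\sqrt2}{4}\,\lambda^{\theta-3/4}$; hence $\sup_{\lambda>0}\psi_\theta<\infty$ if and only if $\theta\in[3/4,1]$, which is the first assertion (and shows that the range is sharp).

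For $\theta\in[3/4,1)$ I would locate the maximiser. A short computation gives
$$
(\log\psi_\theta)'(\lambda)=\frac{\theta-\tfrac34}{\lambda}+\frac1{4\sqrt{\lambda(\lambda+16)}}-\frac1{2(\lambda+16)},
$$
which is positive near $\lambda=0$ and negative for large $\lambda$ (the leading term there being $(\theta-1)/\lambda$); clearing radicals in $(\log\psi_\theta)'(\lambda)=0$ reduces it to a quadratic in $\lambda$ with a single positive root, so $\psi_\theta$ increases then decreases and attains its maximum at the unique $\lambda_*(\theta)>0$ given by \eqref{Maple}. For $\theta=3/4$ this is $\lambda_*=16/3$, and substitution into the formula produces $\mathrm{K}_{1,2}(3/4)=\sqrt2/2$, i.e.~\eqref{1D2theta3/4}; for $\theta=1$ the inequality degenerates to $u(0)^2\le\|u\|^2$ with $\mathrm{K}_{1,2}(1)=1$ and extremal $\delta$, consistent with $\lambda_*(1)=\infty$.

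Finally, for existence and uniqueness of extremals when $\theta\in[3/4,1)$: since $\lambda_*(\theta)\in(0,\infty)$, equality in the Cauchy--Schwarz step holds for $u=G_{\lambda_*(\theta)}$, and the criticality $\psi_\theta'(\lambda_*)=0$ together with the relations $\|G_\lambda\|^2=-f'(\lambda)$, $\|\Delta G_\lambda\|^2=f(\lambda)+\lambda f'(\lambda)$ --- the analogue of Lemma~\ref{L:relations}, valid verbatim here --- forces $\lambda_*(\theta)=\frac{\theta}{1-\theta}\|\Delta G_{\lambda_*}\|^2/\|G_{\lambda_*}\|^2$, so that both inequalities in the chain are simultaneously equalities; hence $G_{\lambda_*(\theta)}$ is the extremal, unique up to scaling. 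The only genuine work is computational: establishing \eqref{1D2Green(0)} by the substitution reducing the integral to $\int dx/(b+\sin^2\frac x2)$, and simplifying the critical equation to the closed form \eqref{Maple}; uniqueness of the critical point is then automatic once that equation is seen to be quadratic. I expect this reduction to a quadratic to be the most delicate bookkeeping, but not conceptually hard.
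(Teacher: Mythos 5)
Your proposal is correct and follows essentially the same route as the paper: compute $\widehat g_\lambda(x)=1/(\lambda+16\sin^4\tfrac x2)$ and the explicit $G_\lambda(0)$ of \eqref{1D2Green(0)}, then repeat the Cauchy--Schwarz/optimization-in-$\lambda$ scheme of Theorem~\ref{T:theta} to get $\mathrm{K}_{1,2}(\theta)=\frac{1}{\theta^\theta(1-\theta)^{1-\theta}}\sup_{\lambda>0}\lambda^\theta G_\lambda(0)$, with finiteness exactly for $\theta\in[3/4,1]$, the maximizer \eqref{Maple}, and uniqueness of the extremal via the equality case and the relations of Lemma~\ref{L:relations}. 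Your boundary asymptotics, the value $\lambda_*(3/4)=16/3$, and the resulting constant $\sqrt2/2$ all match the paper's computation.
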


\begin{remark}\label{R:<0}
{\rm
It is not difficult to find the function $\mathbb{V}(d)$, that is,  the
solution of the maximization problem
\begin{equation}\label{V(d)2ord}
\mathbb{V}(d):=\sup\bigl\{u(0)^2:\
u\in l^2(\mathbb{Z}), \ \|u\|^2=1,\
\|\Delta u\|^2=d\bigr\},
\end{equation}
where $d\in[0,16]$. For this purpose we also need the expression for the Green's
function $G_\lambda(0)$ in the region $\lambda\le-16$, which is
as follows
\begin{equation}\label{1D2Green(0)neg}
G_\lambda(0)=
-\frac1{2\pi}\int_{-\pi}^\pi\frac{dx}{\lambda+16\sin^4\frac x2}=
\frac1{2(-\lambda)^{3/4}}
{\frac{\sqrt{\sqrt{-\lambda}+4}+\sqrt{\sqrt{-\lambda}-4}}{\sqrt{-\lambda-16}}}\,.
\end{equation}
Using \eqref{1D2Green(0)}, \eqref{1D2Green(0)neg} we can
write down a parametric representation of $\mathbb{V}(d)$ as in  Theorem~\ref{T:V(d)2}, but instead we merely show its graph in Fig.\ref{F:V(d)non-sym}.
\begin{figure}[htb]
\centerline{\psfig{file=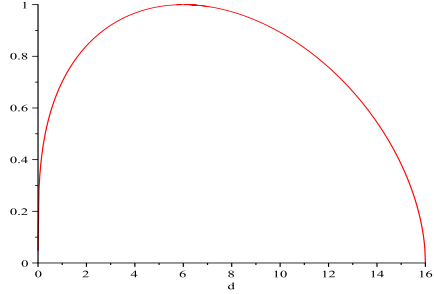,width=6cm,height=4cm,angle=0}
}
\caption{Graph of  $\mathbb{V}(d)$ defined in~\eqref{V(d)2ord}.
}
 \label{F:V(d)non-sym}
\end{figure}

This time we do not have the maximum principle, and the Green's function
 $G_\lambda(n)$ is not  positive for all $n$, but is rather oscillating with exponentially
 decaying amplitude, see Fig.~\ref{Fig:1D2ndord}. Nor do we have the
symmetry $\mathbb{V}(d)=\mathbb{V}(16-d)$ in Fig.~\ref{F:V(d)non-sym}
that we have seen in the first-order inequalities in the
one- and two-dimensional cases, see~\eqref{Vsymm} and  \eqref{Vsymm2}.
The maximum is attained at $d=6$ corresponding to $u=\delta$. The component $\lambda\in(0,\infty)$ of the resolvent set
  corresponds to $d\in (0,6)$
and  $\lambda\in(-\infty,-16)$  corresponds to $d\in (6,16)$.
 }
\end{remark}

It is worth to compare the results so obtained in the
discrete case with the corresponding interpolation
inequalities for Sobolev spaces in the continuous case.
It is well known that the interpolation inequality on
the whole line $\mathbb{R}$
\begin{equation}\label{1donR}
\|f\|^2_{L_\infty}\le \mathrm{C}_{1,n}(\theta)
\|f\|^{2\theta}\|f^{(n)}\|^{2(1-\theta)},
\end{equation}
where $f\in H^n(\mathbb{R})$, holds only for $\theta=1-\tfrac1{2n}$. The sharp constant was found in
\cite{Taikov}:
\begin{equation}\label{Taikov-const}
\mathrm{C}_{1,n}(\theta)=\frac{1}{\theta^\theta(1-\theta)^{1-\theta}}
\frac1\pi\int_0^\infty\frac{dx}{1+x^{2n}}=
\frac{1}{\theta^\theta(1-\theta)^{1-\theta}}
\frac1{2n}\frac1{\sin\frac\pi{2n}}\,.
\end{equation}

Thus, for first-order inequalities both in the discrete and continuous cases
the constants are equal to $1$, while for the second-order
inequalities we see from~\eqref{1D2theta3/4} and
\eqref{Taikov-const} that
$$
\mathrm{C}_{1,2}(3/4)=\left(\frac4{27}\right)^{1/4}
<\frac{\sqrt{2}}2=\mathrm{K}_{1,2}(3/4).
$$

The next theorem states that for higher order
inequalities the constants in the discrete case are always
strictly greater than those in the continuous case.
\begin{theorem}\label{T:theta1n}
Let $n\ge1$ and let $u\in l^2(\mathbb{Z})$. The inequality
\begin{equation}\label{theta1n}
 u(0)^2\le \mathrm{K}_{1,n}(\theta)
 \|u\|^{2\theta}
   \|\mathcal{D}^nu\|^{2(1-\theta)},
\end{equation}
where
\begin{equation}\label{Dn}
\mathcal{D}^n:=\left\{
\begin{array}{ll}
 \Delta^{n/2}, & \hbox{for $n$  even,} \\
\nabla  \Delta^{(n-1)/2}, & \hbox{for $n$ odd,}
\end{array}
 \right.
\end{equation}
holds for $\theta\in[1-1/(2n),1]$ and
\begin{equation}\label{K1n}
\mathrm{K}_{1,n}(\theta)=
\frac{1}{\theta^\theta(1-\theta)^{1-\theta}}\frac 1\pi
\sup_{\lambda>0}\lambda^{\theta}
\int_0^\pi\frac{dx}{\lambda+2^{2n}\sin^{2n}\frac x2}\,.
\end{equation}
For all $\theta\in[1-1/(2n),1]$ supremum is the maximum.
If $n\ge2$, then for $\theta=\theta_*:=1-1/(2n)$
the constants in the continuous and discrete inequalities
satisfy
\begin{equation}\label{KandK}
\mathrm{C}_{1,n}(\theta_*)<\mathrm{K}_{1,n}(\theta_*).
\end{equation}
\end{theorem}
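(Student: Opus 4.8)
The equality $\mathrm{K}_{1,n}(\theta)=\frac{1}{\theta^\theta(1-\theta)^{1-\theta}}\frac1\pi\sup_{\lambda>0}\lambda^\theta\int_0^\pi\frac{dx}{\lambda+2^{2n}\sin^{2n}\frac x2}$ and the fact that the imbedding inequality holds exactly on $[1-1/(2n),1]$ are obtained by a word-for-word repetition of the argument already used for $n=1$ in Theorem~\ref{T:theta} and for $n=2$ in \S\ref{S:2nd}: one computes $\widehat g_\lambda(x)=(\lambda+2^{2n}\sin^{2n}\frac x2)^{-1}$ from $\mathbb{A}(\lambda)G_\lambda=\delta$ by Fourier series (using $(e^{ix/2}-e^{-ix/2})^{2n}=(2i\sin\frac x2)^{2n}=(-1)^n2^{2n}\sin^{2n}\frac x2$ together with the sign adjustment in the definition of $\mathbb{A}(\lambda)$), so that $G_\lambda(0)=\frac1{2\pi}\int_{-\pi}^\pi\widehat g_\lambda=\frac1\pi\int_0^\pi\widehat g_\lambda$. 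The Cauchy–Schwarz step \eqref{additive} and the optimization $\lambda=\frac{\theta}{1-\theta}\|\mathcal D^nu\|^2/\|u\|^2$ from \eqref{sup} then give the formula; finiteness of the supremum forces $\lambda^{\theta-1}$ to be integrable near $x=0$ after the substitution behavior $\sin^{2n}\frac x2\sim(x/2)^{2n}$, which near $\lambda=0$ makes $\lambda^\theta G_\lambda(0)$ behave like $\lambda^{\theta-1+1/(2n)}$ up to a positive constant, hence bounded iff $\theta\ge1-1/(2n)$; at the other end $\lambda^\theta G_\lambda(0)\sim\lambda^{\theta-1}\to0$, so for $\theta\in[1-1/(2n),1)$ the supremum is attained, and $u_*=G_{\lambda_*}$ is the unique extremal exactly as before (the case $\theta=1$ being trivial). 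I would only sketch this and refer to Theorem~\ref{T:theta}.

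\textbf{The comparison \eqref{KandK}.} This is the genuinely new content. Since the prefactors $\frac1{\theta^\theta(1-\theta)^{1-\theta}}$ agree, at $\theta=\theta_*=1-1/(2n)$ the claim reduces to the strict inequality
$$
\frac1\pi\int_0^\infty\frac{dx}{1+x^{2n}}
\;<\;
\frac1\pi\max_{\lambda>0}\lambda^{\theta_*}\int_0^\pi\frac{dx}{\lambda+2^{2n}\sin^{2n}\frac x2},
$$
i.e.\ $\mathrm{C}_{1,n}(\theta_*)\pi<\sup_{\lambda>0}\lambda^{1-1/(2n)}\int_0^\pi(\lambda+2^{2n}\sin^{2n}\frac x2)^{-1}dx$. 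The strategy is to rescale the discrete integral so that the two sides become comparable and then exploit that $\sin t<t$ strictly on $(0,\pi/2]$. Concretely, set $y=2^{2}\sin^{2}\frac x2$ so that the discrete integrand is governed by $\sin\frac x2$, and substitute in the $\lambda$-integral $x=2\arcsin(\mu t/2^{\,})$ or more simply scale $\lambda=\mu^{2n}$; then $\lambda^{1-1/(2n)}\int_0^\pi(\lambda+2^{2n}\sin^{2n}\frac x2)^{-1}dx=\mu^{2n-1}\int_0^\pi(\mu^{2n}+(2\sin\frac x2)^{2n})^{-1}dx$, and with $s=2\sin\frac x2\in[0,2]$, $dx=2\,ds/\sqrt{4-s^2}$, this is $\int_0^{2}\frac{2\mu^{2n-1}\,ds}{(\mu^{2n}+s^{2n})\sqrt{4-s^2}}$. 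Finally put $s=\mu t$: the expression becomes $\int_0^{2/\mu}\frac{2\,dt}{(1+t^{2n})\sqrt{4-\mu^2t^2}}$. Letting $\mu\to0^+$ this increases (monotonically, by the integrand being increasing in $\mu$ and the domain expanding) to $\int_0^\infty\frac{2\,dt}{(1+t^{2n})\cdot 2}=\int_0^\infty\frac{dt}{1+t^{2n}}=\pi\,\mathrm{C}_{1,n}(\theta_*)$. Hence the supremum over $\mu>0$ is strictly larger than this limiting value — it is a supremum of a function strictly exceeding its $\mu\to0$ limit for every $\mu>0$ (because $\sqrt{4-\mu^2t^2}<2$ there, making the integrand strictly larger), so the sup is $\geq$ the value at any fixed small $\mu$, which is $>\pi\,\mathrm{C}_{1,n}(\theta_*)$ — and for $n\ge2$ we must also check it is actually attained at some finite positive $\mu_*$ (it is, by the asymptotics above: the expression vanishes as $\mu\to0$ would be false — rather it tends to the finite positive limit $\pi\,\mathrm{C}_{1,n}$, and tends to $0$ as $\mu\to\infty$, and is continuous, so either the sup equals the $\mu\to0$ limit or is attained; we rule out the former by the strict integrand inequality). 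This yields \eqref{KandK}.

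\textbf{The main obstacle.} The delicate point is the claim that $\sup_{\mu>0}$ of the rescaled integral \emph{strictly} exceeds its limit as $\mu\to0^+$, rather than merely being $\ge$ it; for $n=1$ this limit is in fact attained only in the limit $\mu\to0$ (consistent with "$\lambda_*=\infty$" and with $\mathrm{K}_{1,1}=\mathrm{C}_{1,1}=1$), which is precisely why the theorem excludes $n=1$ from \eqref{KandK}. So for $n\ge2$ I must argue that the integral $\int_0^{2/\mu}\frac{2\,dt}{(1+t^{2n})\sqrt{4-\mu^2t^2}}$, as a function of $\mu$, is \emph{not} monotone decreasing all the way down to its infimum at $\mu=0$: since for each fixed $\mu>0$ the integrand exceeds $\frac{1}{1+t^{2n}}$ pointwise on $(0,2/\mu)$ by the factor $2/\sqrt{4-\mu^2t^2}>1$, while for small $\mu$ the truncation of the domain from $\infty$ to $2/\mu$ removes only a tail of size $O(\mu^{2n-1})$, the gain from the $\sqrt{}$ factor (of order $\mu^2$ near $t$ bounded, but unbounded near the endpoint) dominates the tail loss for $n\ge2$ — this is where $2n-1>1$ is used. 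Making this quantitative — comparing the $\Theta(\mu^2)$ interior gain against the $\Theta(\mu^{2n-1})$ tail loss, which requires $2n-1>2$, i.e.\ $n\ge 2$ with a more careful endpoint analysis when $n=2$ since then both are $\Theta(\mu^{2})$-ish and one needs the endpoint singularity of $(4-\mu^2t^2)^{-1/2}$ to break the tie — is the technical heart of the proof and the step I would spend the most care on.
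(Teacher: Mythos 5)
Your derivation of \eqref{K1n} (Fourier computation of $G_\lambda(0)$, Cauchy--Schwarz as in \eqref{additive}, optimization of $\lambda$ as in \eqref{sup}) is exactly the paper's route, and your rescaling $s=2\sin\frac x2$, $s=\mu t$, $\lambda=\mu^{2n}$ is essentially the paper's substitution $\tan\frac x2=\sqrt{\mu}\,t$, $\mu=\lambda^{1/n}$: both reduce everything to a function $S(\mu)$ with $\lim_{\mu\to0^+}S(\mu)=\int_0^\infty\frac{dt}{1+t^{2n}}$, which gives the range $\theta\in[1-1/(2n),1]$ and the non-strict version of \eqref{KandK}.

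The genuine gap is the strict inequality in \eqref{KandK}, i.e.\ the claim that $\sup_{\mu>0}S(\mu)$ strictly exceeds the $\mu\to0^+$ limit. Your stated reason --- the integrand on $[0,2/\mu]$ strictly dominates $\frac1{1+t^{2n}}$ because $\sqrt{4-\mu^2t^2}<2$, ``hence the supremum is strictly larger than the limiting value'' --- is invalid, because the domain is truncated at $2/\mu$: for $n=1$ the identical pointwise domination holds, yet $S(\mu)=\pi/\sqrt{4+\mu^2}$ lies strictly \emph{below} the limit $\pi/2$ for every $\mu>0$. (Your parenthetical that $S(\mu)$ ``increases monotonically'' to the limit as $\mu\to0^+$ would, if true, likewise force $\sup S=\lim S$ and contradict \eqref{KandK}; and the attainment of the supremum at $\theta=\theta_*$ does not follow from ``vanishing at both ends'', since at $\theta_*$ the $\lambda\to0$ limit is a positive constant --- it hinges on the same missing fact.) Your fallback plan, comparing the interior gain $\Theta(\mu^2)$ against the tail loss $\Theta(\mu^{2n-1})$, is the right idea and could be made rigorous (note for $n=2$ the loss is $\Theta(\mu^{3})$, not $\Theta(\mu^2)$, so there is no tie to break), but you explicitly leave it unexecuted, and it is precisely the heart of the theorem. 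The paper settles it in one stroke: differentiating under the integral sign in its parametrization gives
$$
S'_\mu(0)=2\int_0^\infty\Bigl[\frac{n2^{2n}t^{2n+2}}{(1+2^{2n}t^{2n})^2}-\frac{t^2}{1+2^{2n}t^{2n}}\Bigr]dt
=\frac1{16n}\,\frac{\pi}{\sin\frac{3\pi}{2n}}>0 \quad (n\ge2),
$$
so $S(\mu)>S(0)$ for small $\mu>0$, which yields both the strictness of \eqref{KandK} and the attainment of the supremum at $\theta_*$ (while for $n=1$ one has $S(\mu)=\pi/\sqrt{\mu+4}$, strictly decreasing, consistent with equality of the constants there --- and note the maximizer degenerates as $\lambda\to0$, not $\lambda\to\infty$ as your aside suggests). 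You should replace the pointwise-domination argument by this derivative computation or by a completed version of your gain-versus-loss estimate.
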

\begin{proof} Following the scheme developed above
we look for the solution of the equation
$$
(-1)^{n}\Delta^nG_\lambda+\lambda G_\lambda=\delta,
$$
and as in~\eqref{G4} find that
$$
G_\lambda(0)=
\frac1{\pi}\int_{0}^\pi
\frac{dx}{\lambda+2^{2n}\sin^{2n}\frac x2},
$$
which proves~\eqref{K1n} (whenever the supremum is finite).
Using $\sin^2\frac x2=\tan^2\frac x2/(1+\tan^2\frac x2)$
and changing the variable $\tan\frac x2=\sqrt{\mu}t$, where $\mu=\lambda^{1/n}$ we have
$$
\lambda^{\theta_*}\int_0^\pi
\frac{dx}{\lambda+2^{2n}\sin^{2n}\frac x2}=
\int_0^\infty\frac{2dt}
{(1+\mu t^2)(1+\frac{2^{2n}t^{2n}}{(1+\mu t^2)^n})}
=: S(\mu).
$$
Clearly $S(\infty)=0$, and we have to study $S(\mu)$ as $\mu\to0$.
The integral converges uniformly for $\mu\in[0,1]$,
since the denominator is greater then $1$ for $t\in[0,1]$ and is greater then
$1+c(n)t^2$ for $t\ge1$ observing that
$t^{2n}/(1+\mu t^2)^{n-1}\ge c_1(n) t^2$ uniformly for $\mu\in[0,1]$.
Therefore
$$
\lim_{\mu\to 0}S(\mu)=S(0)=\int_0^\infty\frac{2dt}
{1+2^{2n}t^{2n}}=\int_0^\infty\frac{dx}{1+x^{2n}}\,,
$$
which proves, in the first place,  that the right-hand side in~\eqref{K1n} is finite if and only if
$\theta\in[\theta_*,1]$ and, secondly, that non-strict inequality
\eqref{KandK} holds.
Finally, for $n\ge2$ we have strict inequality since
$$
S'_\mu(0)=2\int_0^\infty\left[\frac{n2^{2n}t^{2n+2}}{(1+2^{2n}t^{2n})^2}
-\frac{t^2}{1+2^{2n}t^{2n}}\right]dt=
\frac1{16n}\frac\pi{\sin\frac{3\pi}{2n}}>0.
$$
For $n=1$ we have $\lambda=\mu$,  $S'_\mu(0)<0$ and
$$
S(\mu)=\frac\pi{\sqrt{\mu+4}}
$$
is strictly decreasing not only at $\mu=0$ but for all $\mu\ge0$,  the fact
that we have already seen in~\eqref{G(0)}.
\end{proof}
\begin{remark}\label{R:Per}
{\rm
Inequality \eqref{theta1n} holds for $\theta\in[1-1/(2n),1]$,
that is, when the weight of the stronger norm, which is the $l_2$-norm,
increases. Accordingly, inequality \eqref{1donR}
for periodic functions with mean value zero holds for
$\theta$ in the {\it complementary} interval $\theta\in [0, 1-1/(2n)]$,
when the weight of the stronger norm, which is the
$L_2$-norm of the $n$-th derivative, increases:
\begin{equation}\label{1donRper}
\|f\|^2_{L_\infty}\le \mathrm{C}_{1,n}^{\mathrm{per}}(\theta)
\|f\|^{2\theta}\|f^{(n)}\|^{2(1-\theta)},\qquad
\theta\in[0,1-\tfrac1{2n}],
\end{equation}
where
$$
f\in H^n_{\mathrm{per}}(\mathbb{S}^1),\qquad
\int_0^{2\pi}f(x)dx=0.
$$
A general method for finding sharp constants in
interpolation inequalities of $L_\infty$-$L_2$-$L_2$-type
was developed in \cite{I98JLMS}, \cite{Zelik}, \cite{IZ},
which was also used in the discrete case in the present paper.
For example, for $n=1$
\begin{equation}\label{C11theta}
\mathrm{C}_{1,1}^{\mathrm{per}}(\theta)=
\frac1{\theta^\theta(1-\theta)^{1-\theta}}
\sup_{\lambda\ge0}\lambda^\theta G(\lambda),
\qquad\theta\in[0,1/2],
\end{equation}
where $G_\lambda(x,\xi)=\frac1{2\pi}\sum_{x\in\mathbb{Z}\setminus\{0\}}
\frac{e^{ik(x-\xi)}}{k^2+\lambda}$ is the Green's function
of the equation
$$
-G_\lambda(x,\xi)''+\lambda G_\lambda(x,\xi)=\delta(x-\xi),
\quad x,\xi\in[0,2\pi]^\mathrm{per},
$$
and
$$
G(\lambda):=G_\lambda(\xi,\xi)=
\frac1\pi\sum_{k=1}^\infty\frac1{k^2+\lambda}=
\frac1{2\pi}\frac{\pi\sqrt\lambda\coth(\pi\sqrt\lambda)-1}\lambda.
$$

For the limiting $\theta=\theta_*$=1/2 the constant is
the same as on $\mathbb{R}$: $\mathrm{C}_{1,1}^{\mathrm{per}}(\theta_*)=
\mathrm{C}_{1,1}(\theta_*)=1$. The graph of $\mathrm{C}_{1,1}^{\mathrm{per}}(\theta)$
on the interval $\theta\in[0,1/2]$ is shown in Fig.\ref{fig:K(theta)} on the left.
Observe that $\mathrm{C}_{1,1}(0)=\pi/6$.
 }
\end{remark}

\setcounter{equation}{0}
\section{Applications}
\label{S:Appl-Carlson}
\subsection*{Discrete and integral Carlson inequalities}\label{SS:Carl}
 We now discuss applications of the inequalities for the discrete operators,
 and our first group of results concerns Carlson inequalities.
  The original Carlson inequality~\cite{Carl} is as follows:
\begin{equation}\label{Carlson}
\left(\sum_{k=1}^\infty a_k\right)^2\le
\pi\left(\sum_{k=1}^\infty a_k^2\right)^{1/2}
\left(\sum_{k=1}^\infty k^2a_k^2\right)^{1/2},
\end{equation}
where the constant $\pi$ is sharp and cannot be attained at a non
identically zero sequence $\{a_k\}_{k=1}^\infty$. This inequality has attracted
a lot of interest and has been a source of generalizations and improvements
(see, for example, \cite{HLP}, \cite{Carl_book}
and the references therein, and also~\cite{IZ} for the most recent strengthening of
\eqref{Carlson}).
Inequality \eqref {Carlson} has an integral analog
(with the same sharp constant)
\begin{equation}\label{Carlson-int}
\left(\int_0^\infty g(t)dt\right)^2\le
\pi\left(\int_0^\infty g(t)^2dt\right)^{1/2}
\left(\int_0^\infty t^2g(t)^2dt\right)^{1/2}.
\end{equation}
As was first observed in~\cite{Hardy}, inequality \eqref{Carlson} is equivalent
to the inequality
\begin{equation}\label{1dSob}
\|f\|_\infty^2\le 1\cdot\|f\|\|f'\|,
\end{equation}
for periodic functions $f\in H^1_{\mathrm{per}}(0,2\pi)$, $\int_0^{2\pi}f(x)dx=0$,
by setting for a  sequence $\{a_k\}_{k=1}^\infty$
$$
f(x)=\sum_{k=-\infty}^\infty a_{|k|}e^{ikx},\qquad a_0=0.
$$

Accordingly, inequality~\eqref{1dSob} for $f\in H^1(\mathbb{R})$
is equivalent (as was first probably  observed in~\cite{Nagy})
to~\eqref{Carlson-int} by setting $g=\mathcal{F}f$ and further restricting $g$
(and $f$) to even functions. Furthermore, the unique
(up to scaling) extremal function $f_*(x)=e^{-|x|}$ in~\eqref{1dSob} on
the whole axis produces the extremal function
$g_*(t)=1/(1+t^2)$ in~\eqref{Carlson-int}.

In the similar way, discrete inequalities have equivalent
integral analogs.
Let $\mathscr{F}$ be the discrete Fourier
transform $\mathscr{F}:\{a(n)\}\to \widehat{a}(x)$, where
$$
 \widehat{a}(x)=\sum_{n\in\mathbb{Z}^d}a(n)e^{inx},
\qquad a(n)=(2\pi)^{-d}\int_{\mathbb{T}^d}\widehat{a}(x)e^{-inx}dx.
$$
Then for $e_j=(0,\dots,0,1,0,\dots,0)$ with $1$ on the $j$th place
$$
\aligned
\mathrm{D}_ja(n)=a(n+e_j)-a(n)=(2\pi)^{-d}\int_{\mathbb{T}^d}
\widehat{a}(x)(e^{-i(n+e_j)x}-e^{-inx})dx=\\=
(2\pi)^{-d}\int_{\mathbb{T}^d} \widehat{a}(x)e^{-ix/2}(-2i)
\sin\tfrac {x_j}2\,e^{-inx}dx.
\endaligned
$$
Therefore
\begin{equation}\label{Parseval}
\|\mathrm{D}_ja\|^2=(2\pi)^{-d}\int_{\mathbb{T}^d} |\widehat{a}(x)|^24\sin^2\tfrac {x_j}2\,dx.
\end{equation}
and, finally,
\begin{equation}\label{Parsevald}
\|a\|^2=
(2\pi)^{-d}\|\widehat{a}\|^2,\
\|\nabla a\|^2=(2\pi)^{-d}\int_{\mathbb{T}^d}|\widehat{a}(x)|^24
\sum_{j=1}^d\nolimits\sin^2\tfrac {x_j}2\,dx.
\end{equation}
Thus, we have proved the following result.
\begin{theorem}\label{T:tintCarlon}
Let $1/2<\theta\le1$.  The inequality
$$
 u(0)^2\le \mathrm{K}_1(\theta)\|u\|^{2\theta}
   \|\D u\|^{2(1-\theta)},\qquad u\in l_2(\mathbb{Z})
$$
established in Theorem~\ref{T:theta} is equivalent to
the inequality
\begin{equation}\label{Carlson-int2pi}
\left(\int_0^{2\pi} g(x)dx\right)^2\le
2\pi\mathrm{K}_1(\theta)\left(\int_0^{2\pi} g(x)^2dx\right)^\theta
\left(\int_0^{2\pi} 4\sin^2\tfrac x2\,g(x)^2dx\right)^{1-\theta},
\end{equation}
for $g\in L_2(0,2\pi)$. Here
$\mathrm{K}_1(\theta)=\frac12\left(2/\theta\right)^\theta
(2\theta-1)^{\theta-1/2}$ (see~\eqref{K(theta)}).
In the limiting case inequality~\eqref{Lap1/2}
is equivalent to
\begin{equation}\label{Carlson-limit}
\aligned
&\left(\int_0^{2\pi} g(x)dx\right)^2\le\\
\pi\sqrt{4-\frac{\int_0^{2\pi} 4\sin^2\tfrac x2\,g(x)^2dx}{\int_0^{2\pi} g(x)^2dx}
}
&\left(\int_0^{2\pi} g(x)^2dx\right)^{1/2}
\left(\int_0^{2\pi} 4\sin^2\tfrac x2\,g(x)^2dx\right)^{1/2}
\endaligned
\end{equation}
\end{theorem}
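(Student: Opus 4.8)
The plan is to use the discrete Fourier transform $\mathscr{F}$ as a dictionary between the discrete inequality and its integral avatar. Recall that $\mathscr{F}\colon l^2(\mathbb{Z})\to L_2(0,2\pi)$, $u\mapsto\widehat u$ with $\widehat u(x)=\sum_{n}u(n)e^{inx}$, is a bijective isometry up to the factor $2\pi$. Concretely, from \eqref{Parsevald} we have $\|u\|^2=(2\pi)^{-1}\int_0^{2\pi}|\widehat u(x)|^2\,dx$, from \eqref{Parseval} (in dimension $d=1$) we have $\|\mathrm{D}u\|^2=(2\pi)^{-1}\int_0^{2\pi}4\sin^2\tfrac x2\,|\widehat u(x)|^2\,dx$, and the inversion formula evaluated at $n=0$ gives $u(0)=(2\pi)^{-1}\int_0^{2\pi}\widehat u(x)\,dx$. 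Since $\mathscr{F}$ is onto $L_2(0,2\pi)$, every $g$ arises as $\widehat u$ for exactly one $u\in l^2(\mathbb{Z})$, which is what will make the correspondence an equivalence rather than a one-way implication.

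First I would substitute these three identities, with $g=\widehat u$, into the inequality of Theorem~\ref{T:theta}. The left-hand side $u(0)^2$ becomes $(2\pi)^{-2}\bigl(\int_0^{2\pi}g\bigr)^2$, the two factors on the right become $(2\pi)^{-\theta}\bigl(\int_0^{2\pi}|g|^2\bigr)^\theta$ and $(2\pi)^{-(1-\theta)}\bigl(\int_0^{2\pi}4\sin^2\tfrac x2\,|g|^2\bigr)^{1-\theta}$, and clearing the powers of $2\pi$ yields precisely \eqref{Carlson-int2pi} (with $|g|^2$ in place of $g^2$); running the substitution in reverse recovers Theorem~\ref{T:theta} from \eqref{Carlson-int2pi}. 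The passage between the complex version with $|g|^2$ and the stated real version with $g^2$ is then a small remark: replacing $g$ by $|g|$ leaves $\int_0^{2\pi}|g|^2$ and $\int_0^{2\pi}4\sin^2\tfrac x2\,|g|^2$ unchanged while it can only enlarge $|\int_0^{2\pi}g|$, so the two forms are equivalent, and in particular the sharp constant is unaffected by restricting to Hermitian sequences $u(-n)=\overline{u(n)}$.

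The refined inequality is handled by the identical substitution applied to \eqref{Lap1/2}. Here $\|\mathrm{D}u\|^2/\|u\|^2$ transforms into $\bigl(\int_0^{2\pi}4\sin^2\tfrac x2\,g^2\bigr)\big/\bigl(\int_0^{2\pi}g^2\bigr)$, so the radical $\sqrt{4-\|\mathrm{D}u\|^2/\|u\|^2}$ is reproduced verbatim, the numerical factor $\tfrac12$ turns into $\pi$ after clearing the $2\pi$'s, and one arrives at \eqref{Carlson-limit}. Since equality is also transported along $\mathscr{F}$, the extremal sequences $G_\lambda$ of Corollary~\ref{C:1/2} correspond exactly to the extremal functions $\widehat G_\lambda(x)=\pm(\lambda+4\sin^2\tfrac x2)^{-1}$ of \eqref{ghat}, i.e.\ to the family $g_\lambda$ quoted in the introduction, and the non-existence of extremals in \eqref{1D} matches the non-existence of extremals in the unrefined case $I_1^2\le 2\pi I_2\widehat I_2$.

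I do not expect a genuine obstacle: the whole argument is essentially a change of variables once the Parseval identities \eqref{Parseval}–\eqref{Parsevald} and the $n=0$ inversion formula are recorded. The only two points deserving care are the surjectivity of $\mathscr{F}$ onto $L_2(0,2\pi)$ — without it one would only get that the integral inequality \emph{implies} the discrete one — and the mild real-versus-complex bookkeeping described above, neither of which is substantive.
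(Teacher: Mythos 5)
Your proposal is correct and follows essentially the same route as the paper: the paper's proof is exactly the transfer of the discrete inequality through the Parseval identities \eqref{Parseval}--\eqref{Parsevald} and the $n=0$ inversion formula, with the extremals $G_{\lambda}$ mapping to $g_{\lambda}(x)=(\lambda+4\sin^2\tfrac x2)^{-1}$. Your extra remarks on surjectivity of $\mathscr{F}$ and the real-versus-complex bookkeeping only make explicit what the paper leaves implicit.
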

\begin{proof} The proof follows from Theorem~\ref{T:theta}
and~\eqref{Parseval}. We also point out that
for $\theta\in(1/2,1)$ inequality~\eqref{Carlson-int2pi}
saturates for
$$
{g}_{\lambda_*}(x)=
\frac1{\lambda_*+4\sin^2\frac x2}\,,\qquad
\lambda_*=\lambda_*(\theta)=\frac{4\theta-2}{1-\theta}\,.
$$
for $\theta=1/2$ no extremals exist and maximizing sequence
is obtained by letting $\lambda_*\to0$;
finally for $\theta=1$, ~\eqref{Carlson-int2pi}
saturates at constants.

For each $d$, $0<d<4$  and $\lambda(d)=\frac{2d}{{2-d}}$ inequality \eqref{Carlson-limit} saturates at
$$
{g}_{\lambda(d)}(x)=
\frac1{\lambda(d)+4\sin^2\frac x2}\,,\
\text{with}\
\frac{\int_0^{2\pi} 4\sin^2\tfrac x2\,g_{\lambda(d)}(x)^2dx}{\int_0^{2\pi} g_{\lambda(d)}(x)^2dx}=d.
$$
For $d=2$, ${g}=\mathrm{const}$.
\end{proof}
\begin{remark}\label{R:Carlson1ndord}
{\rm
Corresponding to~\eqref{1D2theta3/4} is the integral inequality
\begin{equation}\label{Carlson-int2pi2ndord}
\left(\int_0^{2\pi} g(x)dx\right)^2\le
\pi\sqrt{2}\left(\int_0^{2\pi} g(x)^2dx\right)^{3/4}
\left(\int_0^{2\pi} 16\sin^4\tfrac x2\,g(x)^2dx\right)^{1/4},
\end{equation}
which turns into equality for
$$
g_*(x)=\frac1{\frac{16}3+16\sin^4\frac x2}\,.
$$
}
\end{remark}
\begin{remark}\label{R:Carlson2ndord}
{\rm
The integral analog of the two dimensional discrete inequality is
\begin{equation}\label{Carlson-int2pi2ndord2d}
\aligned
&\left(\int_{\mathbb{T}^2} g(x,y)dxdy\right)^2\le\\
(2\pi)^{2}\mathrm{K}_2(\theta)&\left(\int_{\mathbb{T}^2}
 g(x,y)^2dxdy\right)^{\theta}
\left(\int_{\mathbb{T}^2} 4(\sin^2\tfrac x2+\sin^2\tfrac y2)
\,g(x,y)^2dxdy\right)^{1-\theta},
\endaligned
\end{equation}
where $\theta\in(0,1]$, and $\mathrm{K}_2(\theta)$
is defined in~\eqref{K(theta)2}.
}
\end{remark}
\begin{remark}\label{R:Carlson3ndord}
{\rm
In the $d$-dimensional case, $d\ge3$,  for the  exponent
$\theta=0$ the Parceval's identities \eqref{Parsevald}
provide an independent {\it elementary}
 proof of \eqref{Kd(0)}. In fact, setting
$g_0(x)=4\sum_{j=1}^d\nolimits\sin^2\tfrac {x_j}2$ we have
 \begin{equation}\label{Carl3d}
 \aligned
 (2\pi)^{2d}|a(0)|^2=
\left|\int_{\mathbb{T}^d}\widehat{a}(x)dx\right|^2\le\\
\le
\left(\int_{\mathbb{T}^d}|\widehat{a}(x)|g_0(x)^{1/2}g_0(x)^{-1/2}dx\right)^2\le\\
\le
\int_{\mathbb{T}^d}|\widehat{a}(x)|^2g_0(x)dx
\int_{\mathbb{T}^d}g_0(x)^{-1}dx=
(2\pi)^{2d}\mathrm{K}_d(0)\|\nabla a\|^2,
\endaligned
\end{equation}
which proves \eqref{Kd(0)}.
 }
\end{remark}

This  approach
can be generalized to the $l^p$-case for the proof of the discrete Sobolev
type inequality in the {\it non-limiting} case~\eqref{Sobolev}.
Here in addition to
 the Parseval's identity we also use the Hausdorff-Young inequality (see, for instance,
\cite{B-L}):
\begin{equation}\label{H-Y}
\|\widehat a\|_{L_p(\mathbb{T}^d)}\le (2\pi)^{d/p}\|a\|_{l^{p'}(\mathbb{Z}^d)},
\end{equation}
where $p\ge2$ and $p'=p/(p-1)$.

In fact, we have $\|\mathscr{F}\|_{l^2\to L_2}=(2\pi)^{d/2}$
and $\|\mathscr{F}\|_{l^1\to L_\infty}=1$ and by the Riesz--Thorin
interpolation theorem
$$
\|\mathscr{F}\|_{l^{p'}\to L_p}\le (2\pi)^{d\theta/2}=
(2\pi)^{d/p},
$$
where $\frac1{p'}=\frac\theta 2+\frac{1-\theta}1$,
$\frac1{p}=\frac\theta 2+\frac{1-\theta}\infty$. We also observe that
\eqref{H-Y} becomes an equality for $\widehat{a}(x)=1$ and $a=\delta$.

 Setting $q=2p$ in \eqref{Sobolev},
$v(n):=\overline{u(n)^p}u(n)^{p-1}$, and using the auxiliary
inequality  \eqref{auxv}, \eqref{Ipd} below, we obtain
$$
\aligned
\|u\|_{l^{2p}}^{2p}=\sum_{n\in\mathbb{Z}^d}v(n)u(n)=
\sum_{n\in\mathbb{Z}^d}(\mathrm{D}^*\mathrm{D}\Delta^{-1}v(n))u(n)=\\=
\sum_{n\in\mathbb{Z}^d}\mathrm{D}\Delta^{-1}v(n)\mathrm{D}u(n)\le
\left(\sum_{n\in\mathbb{Z}^d}|\mathrm{D}\Delta^{-1}v(n)|^{2}\right)^{1/2}\|\mathrm{D}u\|\le\\
\le\left(\frac14(2\pi)^{d/p'}I_{p',d}\right)^{1/2}
\|v\|_{l^{(2p)'}}\|\mathrm{D}u\|=
\left(\frac14(2\pi)^{d/p'}I_{p',d}\right)^{1/2}
\|u\|_{l^{2p}}^{2p-1}\|\mathrm{D}u\|.
\endaligned
$$
It remains to prove \eqref{auxv}. By  H\"older's inequality and \eqref{H-Y}
we have
\begin{equation}\label{auxv}
\aligned
\sum_{n\in\mathbb{Z}^d}|\mathrm{D}\Delta^{-1}v(n)|^2=\frac{(2\pi)^d}{4}
\int_{\mathrm{T}^d}\frac{|\widehat{v}(x)|^2dx}{\sum_{j=1}^d\nolimits\sin^2\tfrac {x_j}2}\le\\
\le
\frac{(2\pi)^d}{4}
I_{p',d}
\left(\int_{\mathrm{T}^d}{|\widehat{v}(x)|^{2p}dx}\right)^{1/p}\le\\
\le
\frac{(2\pi)^d}{4}
I_{p',d}(2\pi)^{d/p}
\|v\|_{l^{(2p)'}}^2=\frac14(2\pi)^{d(p+1)/p}I_{p',d}
\|v\|_{l^{(2p)'}}^2
\endaligned
\end{equation}
where
\begin{equation}\label{Ipd}
I_{p',d}=
\left(\int_{\mathrm{T}^d}\frac{dx}{(\sum_{j=1}^d\nolimits\sin^2\tfrac {x_j}2)^{p'}}\right)^{1/p'}\!<\infty
\ \text{for}\ p'<d/2\Leftrightarrow p>d/(d-2).
\end{equation}

Thus we obtain the following result.
\begin{theorem}\label{T:Lapmultdiv}
Let $d\ge3$ and $2p>2d/(d-2)$. Then
$$
\|u\|^2_{l^{2p}(\mathbb{Z})^d}=\left(\sum_{n\in\mathrm{Z}^d}|u(n)|^{2p}\right)^{1/p}\le
\frac14(2\pi)^{d(p+1)/p}I_{p',d}\|\mathrm{D}u\|^2,
$$
where $I_{p',d}$ is defined in \eqref{Ipd}.
\end{theorem}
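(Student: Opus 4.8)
The plan is to dualize the $l^{2p}$-norm and then exploit a feature absent in the continuous setting: the difference operator $\mathrm{D}$ has the bounded, explicitly known Fourier symbol $4\sum_j\sin^2\tfrac{x_j}2$, so that inverting the positive operator $\mathrm{D}^*\mathrm{D}$ amounts to multiplication by $\bigl(4\sum_j\sin^2\tfrac{x_j}2\bigr)^{-1}$ — precisely the weight whose integrability is measured by $I_{p',d}$ in \eqref{Ipd}. All the analytic ingredients (Parseval's identity \eqref{Parsevald}, the Hausdorff--Young inequality \eqref{H-Y}, and the auxiliary bound \eqref{auxv}, \eqref{Ipd}) are available from the preceding discussion, so the proof is an assembly of these.

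Concretely, I would set $q=2p$ and introduce the dual sequence $v(n):=\overline{u(n)^p}\,u(n)^{p-1}$. Then $\sum_n v(n)u(n)=\|u\|_{l^{2p}}^{2p}$, and since $(2p-1)(2p)'=2p$ one has $v\in l^{(2p)'}(\mathbb{Z}^d)$ with $\|v\|_{l^{(2p)'}}=\|u\|_{l^{2p}}^{2p-1}$; because $(2p)'<2$, this also gives $v\in l^2$, so $\Delta^{-1}v$ (the inverse of $\mathrm{D}^*\mathrm{D}$ applied to $v$) makes sense. Writing $v=\mathrm{D}^*\mathrm{D}\,\Delta^{-1}v$ and summing by parts,
\[
\|u\|_{l^{2p}}^{2p}=\sum_{n}\mathrm{D}\Delta^{-1}v(n)\cdot\mathrm{D}u(n)\le\|\mathrm{D}\Delta^{-1}v\|\,\|\mathrm{D}u\|
\]
by Cauchy--Schwarz; that $\mathrm{D}\Delta^{-1}v$ really belongs to $l^2$ is exactly what the next step confirms.

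Next I would bound $\|\mathrm{D}\Delta^{-1}v\|$ as in \eqref{auxv}: Parseval's identity \eqref{Parsevald} rewrites $\|\mathrm{D}\Delta^{-1}v\|^2$ as $\tfrac{(2\pi)^d}{4}\int_{\mathbb{T}^d}|\widehat v(x)|^2\bigl/\bigl(\sum_j\sin^2\tfrac{x_j}2\bigr)\,dx$; H\"older's inequality with conjugate exponents $p,p'$ splits off the factor $I_{p',d}$ of \eqref{Ipd} and leaves $\bigl(\int_{\mathbb{T}^d}|\widehat v|^{2p}\bigr)^{1/p}=\|\widehat v\|_{L_{2p}}^2$; and the Hausdorff--Young inequality \eqref{H-Y} estimates this by $(2\pi)^{d/p}\|v\|_{l^{(2p)'}}^2$. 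Altogether $\|\mathrm{D}\Delta^{-1}v\|^2\le\tfrac14(2\pi)^{d(p+1)/p}I_{p',d}\|v\|_{l^{(2p)'}}^2$. Substituting into the previous display, using $\|v\|_{l^{(2p)'}}=\|u\|_{l^{2p}}^{2p-1}$, and cancelling the common factor $\|u\|_{l^{2p}}^{2p-1}$ yields $\|u\|_{l^{2p}}^2\le\tfrac14(2\pi)^{d(p+1)/p}I_{p',d}\|\mathrm{D}u\|^2$, which is the assertion.

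The one genuine point is the finiteness of $I_{p',d}$, and this is where the hypothesis $q=2p>2d/(d-2)$ enters: near the origin $\sum_j\sin^2\tfrac{x_j}2$ is comparable to $|x|^2$, so $\bigl(\sum_j\sin^2\tfrac{x_j}2\bigr)^{-p'}$ is integrable over $\mathbb{T}^d$ exactly when $2p'<d$, i.e. $p>d/(d-2)$, i.e. $q>2d/(d-2)$ — this is also why the constant blows up as $q\downarrow 2d/(d-2)$. Everything else is routine tracking of the powers of $2\pi$, which I expect to be the only slightly tedious part of the write-up.
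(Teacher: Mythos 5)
Your proposal is correct and follows essentially the same route as the paper: the duality with $v(n)=\overline{u(n)^p}u(n)^{p-1}$, summation by parts through $\mathrm{D}^*\mathrm{D}\Delta^{-1}$, Cauchy--Schwarz, and then Parseval \eqref{Parsevald}, H\"older with exponents $p,p'$, and Hausdorff--Young \eqref{H-Y} exactly as in \eqref{auxv}, with the hypothesis $2p>2d/(d-2)$ entering only through the finiteness of $I_{p',d}$ in \eqref{Ipd}. Your bookkeeping of the powers of $2\pi$ is consistent with \eqref{auxv} and the statement of the theorem.
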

\begin{remark}\label{R:Sobolev}
{\rm
We do not claim that the constant here is sharp.
Moreover, it blows up for $2p=2d/(d-2)$, while it can be
shown that  the inequality
still holds. However, the constant is sharp in the
opposite limit $p=\infty$, see \eqref{Kd(0)}.
}
\end{remark}

\subsection*{Spectral inequalities for discrete operators}\label{SS:Spectral}

Interpolation inequalities characterizing imbeddings of Sobolev spaces
into the space of bounded continuous functions have important
applications in spectral theory. The original fruitful idea in~\cite{E-F}
has been generalized in~\cite{D-L-L} to give best-known
estimates for the Lieb--Thirring constants in estimates for the negative
trace of Schr\"{o}dinger operators.

In this section we apply our sharp interpolation inequalities with
the method of~\cite{E-F} for estimates of the negative trace of the
discrete operators \cite{Arman}.

We write the inequalities  obtained above in the unform way
\begin{equation}\label{theta1nd}
 \sup_{k\in \mathbb{Z}^d}u(k)^2\le \mathrm{K}(\theta)
 \|u\|^{2\theta}
   \|\mathcal{D}^nu\|^{2(1-\theta)},
\end{equation}
where $\mathcal{D}^n$ is as in~\eqref{Dn} and $\theta$
belongs to a certain subinterval of $[0,1]$ uniquely
defined in the corresponding theorem:
\begin{equation}\label{theta1ndinterval}
 \theta\in\left\{
            \begin{array}{ll}
             $[1-1/(2n),1]$, & \hbox{$d=1$, $n\ge 1$;} \\
             $(0,\,1]$, & \hbox{$d=2$, $n=1$;} \\
             $[0,\,1]$, & \hbox{$d\ge 3$, $n=1$.}
            \end{array}
          \right.
\end{equation}

\begin{theorem}\label{T:orth}
Let $\{u^{(j)}\}_{j=1}^N\in l^2(\mathbb{Z}^d)$ be a family of $N$
sequences that are  orthonormal with respect to the natural scalar
product in $l^2(\mathbb{Z}^d)$. We set
\begin{equation}\label{rho}
\rho(k):=\sum_{j=1}^Nu^{(j)}(k)^2,\qquad k\in \mathbb{Z}^d.
\end{equation}
Then for $\theta$ as in \eqref{theta1ndinterval} and
$\theta<1$
\begin{equation}\label{orth}
\|\rho\|_{l^{\frac{2-\theta}{1-\theta}}}^{\frac{2-\theta}{1-\theta}}
=\sum_{k\in\mathbb{Z}^d}
\rho(k)^{\frac{2-\theta}{1-\theta}}\le
\mathrm{K}(\theta)^{\frac1{1-\theta}}
\sum_{j=1}^N\|\mathcal{D}^nu^{(j)}\|^2.
\end{equation}
\end{theorem}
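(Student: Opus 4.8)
The plan is to follow the Eden--Foias scheme of \cite{E-F}, which converts a pointwise interpolation inequality of the form \eqref{theta1nd} into a bound on the $L^p$-norm of the density $\rho$ built from an orthonormal family. First I would fix $k\in\mathbb{Z}^d$ and apply \eqref{theta1nd} not to a single sequence but to the particular linear combination that maximizes the value at $k$: writing $\rho(k)=\sum_{j=1}^N u^{(j)}(k)^2$, the Cauchy--Schwarz / variational argument shows that there is a unit vector $a=(a_1,\dots,a_N)$ (namely $a_j=u^{(j)}(k)/\sqrt{\rho(k)}$) such that the sequence $w:=\sum_j a_j u^{(j)}$ satisfies $w(k)^2=\rho(k)$. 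Applying \eqref{theta1nd} to $w$ at the point $k$ gives
\begin{equation*}
\rho(k)\le \mathrm{K}(\theta)\,\|w\|^{2\theta}\,\|\mathcal{D}^n w\|^{2(1-\theta)}=\mathrm{K}(\theta)\,\|\mathcal{D}^n w\|^{2(1-\theta)},
\end{equation*}
since $\|w\|^2=\sum_j a_j^2=1$ by orthonormality. Next I would bound $\|\mathcal{D}^n w\|^2=\sum_{l}\bigl(\mathcal{D}^n w(l)\bigr)^2$ pointwise in $l$: again by Cauchy--Schwarz in the index $j$,
\begin{equation*}
\bigl(\mathcal{D}^n w(l)\bigr)^2=\Bigl(\sum_j a_j \mathcal{D}^n u^{(j)}(l)\Bigr)^2\le \sum_j \bigl(\mathcal{D}^n u^{(j)}(l)\bigr)^2=:\sigma(l),
\end{equation*}
so that $\|\mathcal{D}^n w\|^2\le \sum_l \sigma(l)=\sum_{j=1}^N\|\mathcal{D}^n u^{(j)}\|^2=:M$. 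Hence $\rho(k)\le \mathrm{K}(\theta)\,M^{1-\theta}$ for every $k$; this already gives an $l^\infty$ bound on $\rho$, but a crude one.

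The refinement that yields the stated $l^{(2-\theta)/(1-\theta)}$-bound is the standard trick of not throwing away all of $\sigma$ at once. Instead I would keep the full pointwise estimate $\rho(k)\le \mathrm{K}(\theta)\bigl(\sum_l \sigma(l)\bigr)^{1-\theta}$ and combine it with the observation that $\sum_l\sigma(l)=M$ is a \emph{fixed} number while the $k$-dependence must be reinserted. The cleaner route is: raise $\rho(k)\le\mathrm{K}(\theta)M^{1-\theta}$ to an appropriate power so that, after summing over $k$, the left side becomes $\sum_k\rho(k)^{(2-\theta)/(1-\theta)}$ and the right side contains $M$ to the first power. Concretely, multiply the inequality $\rho(k)\le \mathrm{K}(\theta)M^{1-\theta}$ by $\rho(k)^{1/(1-\theta)}$ and sum; but $\sum_k\rho(k)^{1/(1-\theta)}$ is not obviously controlled, so the correct bookkeeping is to use $\rho(k)^{(2-\theta)/(1-\theta)}=\rho(k)\cdot\rho(k)^{1/(1-\theta)}$ and to replace one factor $\rho(k)\le\mathrm{K}(\theta)M^{1-\theta}$ while estimating $\sum_k\rho(k)\,\rho(k)^{\theta/(1-\theta)}$ against $\sum_k\rho(k)\cdot(\mathrm{K}(\theta)M^{1-\theta})^{\theta/(1-\theta)}$ and using $\sum_k\rho(k)=\sum_j\|u^{(j)}\|^2=N$ — however that introduces an unwanted $N$. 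The honest Eden--Foias argument avoids $N$ altogether: from $\rho(k)\le\mathrm{K}(\theta)\|\mathcal D^n w_k\|^{2(1-\theta)}$ where now $w_k$ genuinely depends on $k$, one uses $\|\mathcal D^n w_k\|^2\le\sum_l\sigma(l)$ but weighted so that each $\sigma(l)$ is charged to the points $k$ near $l$; summing $\rho(k)^{1/(1-\theta)}\le \mathrm{K}(\theta)^{1/(1-\theta)}\|\mathcal D^n w_k\|^2$ over $k$ and interchanging the order of summation (using $\sum_k$ of the localized weight equals $1$) gives $\sum_k\rho(k)^{1/(1-\theta)}\le \mathrm{K}(\theta)^{1/(1-\theta)}\sum_j\|\mathcal D^n u^{(j)}\|^2$, and finally Hölder upgrades the exponent $1/(1-\theta)$ on $\sum_k\rho(k)^{\cdot}$ to $(2-\theta)/(1-\theta)$ after absorbing one power of $\rho$ via $\rho(k)\le\mathrm{K}(\theta)\,M^{1-\theta}$ pointwise. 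I would organize this as: (i) the pointwise inequality $\rho(k)^{1/(1-\theta)}\le\mathrm{K}(\theta)^{1/(1-\theta)}\sigma(k)$ in a suitably averaged sense, (ii) sum over $k$, (iii) interpolate the resulting $l^{1/(1-\theta)}$-information about $\rho$ with the trivial $l^1$-bound $\sum_k\rho(k)=N$ — but since the theorem's right side is $N$-free, step (iii) must instead be the Hölder step $\sum_k\rho^{(2-\theta)/(1-\theta)}=\sum_k\rho^{1/(1-\theta)}\cdot\rho\le(\sup_k\rho)\sum_k\rho^{1/(1-\theta)}\le\mathrm K(\theta)M^{1-\theta}\cdot\mathrm K(\theta)^{1/(1-\theta)}M$, which is not right dimensionally either, so the genuinely correct combination is $\sum_k\rho^{(2-\theta)/(1-\theta)}\le(\sup_k\rho^{1/(1-\theta)})\cdot\text{(something)}$ — I will need to get the exponent arithmetic exactly so that exactly one power of $M$ survives.

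The main obstacle is precisely this exponent bookkeeping together with the localization device that makes the $\sigma(l)$ charge sum correctly to $M$ without picking up $N$ or a dimensional constant. The key algebraic identity to verify is $\frac{2-\theta}{1-\theta}=\frac{1}{1-\theta}+1$, so that $\rho^{(2-\theta)/(1-\theta)}=\rho^{1/(1-\theta)}\cdot\rho$; then one applies the pointwise bound $\rho(k)\le\mathrm{K}(\theta)\|\mathcal D^n w_k\|^{2(1-\theta)}$ to the factor $\rho^{1/(1-\theta)}=\bigl(\rho(k)^{1/(1-\theta)}\bigr)\le\mathrm{K}(\theta)^{1/(1-\theta)}\|\mathcal D^n w_k\|^2$, yielding
\begin{equation*}
\sum_k\rho(k)^{\frac{2-\theta}{1-\theta}}\le \mathrm{K}(\theta)^{\frac1{1-\theta}}\sum_k\rho(k)\,\|\mathcal D^n w_k\|^2.
\end{equation*}
It remains to show $\sum_k\rho(k)\,\|\mathcal D^n w_k\|^2\le\sum_{j=1}^N\|\mathcal D^n u^{(j)}\|^2$; writing $\rho(k)\|\mathcal D^n w_k\|^2=\sum_l\bigl(\sum_j u^{(j)}(k)\mathcal D^n u^{(j)}(l)\bigr)^2\le\sum_l\sum_j u^{(j)}(k)^2\cdot\sum_j(\mathcal D^n u^{(j)}(l))^2=\rho(k)\sum_l\sigma(l)$ — no, that reintroduces $\rho(k)$; the right manipulation is $\rho(k)\|\mathcal D^n w_k\|^2=\sum_l\bigl(\sum_j u^{(j)}(k)\mathcal D^n u^{(j)}(l)\bigr)^2$ and summing over $k$ first and using orthonormality $\sum_k u^{(i)}(k)u^{(j)}(k)=\delta_{ij}$ collapses the $k$-sum to $\sum_l\sum_j(\mathcal D^n u^{(j)}(l))^2=M$. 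That is the crux: \emph{summing in $k$ before expanding} exploits orthonormality to kill the cross terms exactly. I expect the write-up to be short once this order of operations is fixed; the delicate point is making sure the vector $a$ in the definition of $w_k$ is chosen consistently (it is $a_j=u^{(j)}(k)$, \emph{not} normalized, so that $\rho(k)\|\mathcal D^n w_k\|^2$ appears naturally rather than $\|\mathcal D^n w_k\|^2$ alone) and that the pointwise interpolation inequality \eqref{theta1nd} applied to the \emph{unnormalized} $w_k=\sum_j u^{(j)}(k)u^{(j)}$ gives $w_k(k)^2=\rho(k)^2$ against $\|w_k\|^{2\theta}\|\mathcal D^n w_k\|^{2(1-\theta)}$ with $\|w_k\|^2=\rho(k)$, i.e. $\rho(k)^2\le\mathrm K(\theta)\rho(k)^\theta\|\mathcal D^n w_k\|^{2(1-\theta)}$, hence $\rho(k)^{(2-\theta)}\le\mathrm K(\theta)\|\mathcal D^n w_k\|^{2(1-\theta)}$, and raising to the power $1/(1-\theta)$ gives $\rho(k)^{(2-\theta)/(1-\theta)}\le\mathrm K(\theta)^{1/(1-\theta)}\|\mathcal D^n w_k\|^2$. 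Summing over $k$ and using orthonormality to evaluate $\sum_k\|\mathcal D^n w_k\|^2=\sum_j\|\mathcal D^n u^{(j)}\|^2$ then finishes the proof.
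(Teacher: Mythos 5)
Your final argument is correct and is essentially the paper's own proof: applying \eqref{theta1nd} to the unnormalized combination $w_k=\sum_j u^{(j)}(k)u^{(j)}$ (equivalently, setting $\xi_j=u^{(j)}(k)$ after stating the inequality for a general linear combination), raising to the power $\frac1{1-\theta}$, summing over $k$, and using orthonormality to collapse $\sum_k\|\mathcal{D}^n w_k\|^2$ to $\sum_j\|\mathcal{D}^n u^{(j)}\|^2$. The earlier detours (normalized $w$, localization weights, the $l^1$-bound giving $N$) are unnecessary and can be deleted; the last paragraph alone is the complete proof.
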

\begin{proof}
For arbitrary $\xi_1,\dots,\xi_N\in\mathbb{R}$ we construct a sequence
$f\in l^2(\mathbb{Z}^d)$
$$
f(k):=\sum_{j=1}^N\xi_ju^{(j)}(k),\qquad k\in\mathbb{Z}^d.
$$
Applying \eqref{theta1nd} and using orthonormality
we obtain for a fixed $k$
$$
f(k)^2\le \mathrm{K}(\theta)\left(\sum_{j=1}^N\xi_j^2\right)^\theta
\left(\sum_{i,j=1}^N\xi_i\xi_j
\bigl(\mathcal{D}^nu^{(i)},\mathcal{D}^nu^{(j)}\bigr)\right)^{1-\theta}.
$$
We now set $\xi_j:=u^{(j)}(k)$:
$$
\rho(k)^2\le \mathrm{K}(\theta)\rho(k)^\theta
\left(\sum_{i,j=1}^Nu^{(i)}(k)u^{(j)}(k)
\bigl(\mathcal{D}^nu^{(i)},\mathcal{D}^nu^{(j)}\bigr)\right)^{1-\theta},
$$
or
$$
\rho(k)^{\frac{2-\theta}{1-\theta}}\le \mathrm{K}(\theta)^\frac1{1-\theta}
\sum_{i,j=1}^Nu^{(i)}(k)u^{(j)}(k)
\bigl(\mathcal{D}^nu^{(i)},\mathcal{D}^nu^{(j)}\bigr).
$$
Summing over $k\in \mathbb{Z}^d$ and using orthonormality
we obtain~\eqref{orth}.
\end{proof}
\begin{corollary}\label{C:N=1}
Setting $N=1$ in Theorem~\ref{T:orth} we obtain a family of
interpolation inequalities for $u\in l^2(\mathbb{Z}^d)$
\begin{equation}\label{N=1}
\|u\|_{l^{\frac{2(2-\theta)}{1-\theta}}(\mathbb{Z}^d)}\le
\mathrm{K}(\theta)^{\frac1{2(2-\theta)}}\|u\|^{\frac1{2-\theta}}
\|\mathcal{D}^nu\|^\frac{1-\theta}{2-\theta}.
\end{equation}
In particular, to mention a few examples with
limiting $\theta$
$$
\aligned
&\|u\|_{l^6(\mathbb{Z})}\le 1\cdot\|u\|^{2/3}\|\mathrm{D}u\|^{1/3},\
\ \theta=1/2,\\
&\|u\|_{l^{10}(\mathbb{Z})}\le 2^{-1/5}\,\|u\|^{4/5}\|\Delta u\|^{1/5},
\ \theta=3/4,
\endaligned
$$
in dimension $d\ge3$
$$
\|u\|_{l^4(\mathbb{Z}^d)}\le \mathrm{K}_d(0)^{1/4}\|u\|^{1/2}\|\nabla u\|^{1/2},\
\ \theta=0.
$$
\end{corollary}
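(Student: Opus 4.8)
The plan is to read off this family of inequalities as the case $N=1$ of Theorem~\ref{T:orth}, combined with the homogeneity of both sides. Since $\theta<1$ throughout the admissible ranges \eqref{theta1ndinterval}, inequality~\eqref{orth} is available. If $u\equiv0$ there is nothing to prove, so assume $u\neq0$ and apply~\eqref{orth} to the one-element orthonormal system $\{u^{(1)}\}$ with $u^{(1)}:=u/\|u\|$. With this choice $\rho(k)=u^{(1)}(k)^2=u(k)^2/\|u\|^2$, and the right-hand side of~\eqref{orth} reduces to $\mathrm{K}(\theta)^{1/(1-\theta)}\|\mathcal{D}^nu^{(1)}\|^2=\mathrm{K}(\theta)^{1/(1-\theta)}\|\mathcal{D}^nu\|^2/\|u\|^2$ (a single summand, since $N=1$).

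Next I would unwind the exponents. Writing $q:=\frac{2(2-\theta)}{1-\theta}$, so that $q/2=\frac{2-\theta}{1-\theta}$, the left-hand side of~\eqref{orth} equals
\[
\sum_{k\in\mathbb{Z}^d}\rho(k)^{q/2}=\|u\|^{-q}\sum_{k\in\mathbb{Z}^d}|u(k)|^{q}=\|u\|^{-q}\,\|u\|_{l^q(\mathbb{Z}^d)}^q .
\]
Hence~\eqref{orth} reads $\|u\|_{l^q}^q\le \mathrm{K}(\theta)^{1/(1-\theta)}\|\mathcal{D}^nu\|^2\|u\|^{q-2}$, and using the identity $q-2=\frac{2(2-\theta)}{1-\theta}-2=\frac{2}{1-\theta}$ this is $\|u\|_{l^q}^q\le \mathrm{K}(\theta)^{1/(1-\theta)}\|\mathcal{D}^nu\|^2\|u\|^{2/(1-\theta)}$. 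Taking the $q$-th root and simplifying the resulting exponents via $\frac{1}{(1-\theta)q}=\frac{1}{2(2-\theta)}$, $\frac{2}{q}=\frac{1-\theta}{2-\theta}$ and $\frac{2}{(1-\theta)q}=\frac{1}{2-\theta}$ yields exactly~\eqref{N=1}.

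The particular cases follow by inserting the sharp constants established earlier: $\theta=1/2$, $n=1$, $d=1$ with $\mathrm{K}_1(1/2)=1$ (Corollary~\ref{C:1/2}) gives $q=6$; $\theta=3/4$, $n=2$, $d=1$ with $\mathrm{K}_{1,2}(3/4)=\sqrt2/2$ (Theorem~\ref{T:theta12}) gives $q=10$ and $(\sqrt2/2)^{1/(2(2-3/4))}=2^{-1/5}$; and $\theta=0$, $n=1$, $d\ge3$ with $\mathrm{K}_d(0)$ (Theorem~\ref{T:theta3}) gives $q=4$. There is no genuine obstacle in this corollary; the only thing that needs care is the bookkeeping of the interlocking exponents $q=\frac{2(2-\theta)}{1-\theta}$, $\frac{1}{2-\theta}$, $\frac{1-\theta}{2-\theta}$, and one should note that in the case $\theta=0$, $d\ge3$ the statement is made for $u\in l^2(\mathbb{Z}^d)$, so the normalization $u/\|u\|$ is legitimate, even though the extremal of the underlying $l^\infty$ inequality~\eqref{Kd(0)} itself lies only in $l^\infty_0$ and not in $l^2$.
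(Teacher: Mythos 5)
Your proposal is correct and is exactly the argument the paper intends: take the one-element orthonormal family $u/\|u\|$ in Theorem~\ref{T:orth}, then restore homogeneity and simplify the exponents, which you have done accurately (including the constants $1$, $2^{-1/5}$, $\mathrm{K}_d(0)^{1/4}$ in the three examples). No gaps; the remark that the $\theta=0$, $d\ge3$ case is stated for $u\in l^2(\mathbb{Z}^d)$, so the normalization is legitimate, is a correct and sensible observation.
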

\begin{remark}\label{R:Lad}
{\rm
The last inequality holding in dimesion three and higher curiously resembles the celebrated
Ladyzhenskaya inequality that is vital for the uniqueness
of the weak solutions of the \textit{two-dimensional\/} Navier--Stokes system:
$$
\|f\|_{L_4(\Omega)}\le c_\mathrm{L}\|f\|^{1/2}\|\nabla f\|^{1/2},
\ f\in H^1_0(\Omega), \ \Omega\subseteq\mathbb{R}^2.
$$
}
\end{remark}

We now exploit the equivalence between the inequalities
for orthonormal families and spectral estimates for
the negative trace of the Schr\"{o}dinger operators~\cite{L-T}.

We consider the discrete Schr\"{o}dinger operator
\begin{equation}\label{discrSch}
H:=(-1)^n\Delta^n -V,
\end{equation}
acting on $u\in l^2(\mathbb{Z}^d)$ as follows
$$
Hu(k)=(-1)^n\Delta^nu(k) -V(k)u(k).
$$
\begin{theorem}\label{T:L-T}
Let $V(k)\ge0$ and let $V(k)\to0$ as $|k|\to\infty$, then the negative
spectrum of $H$ is discrete and satisfies the estimate
\begin{equation}\label{discrL-T}
\sum|\lambda_j|\le \mathrm{K}(\theta)\frac{(1-\theta)^{1-\theta}}
{(2-\theta)^{2-\theta}}\sum_{k\in\mathbb{Z}^d}
V(k)^{2-\theta}.
\end{equation}
\end{theorem}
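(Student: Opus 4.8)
The plan is to use the duality between the Lieb--Thirring bound for the negative trace of $H=(-1)^n\Delta^n-V$ and the collective Sobolev-type inequality for orthonormal families established in Theorem~\ref{T:orth}. The standard argument of Eden--Foias (see \cite{E-F}, \cite{L-T}) proceeds as follows. First I would let $\lambda_1\le\lambda_2\le\dots<0$ be the negative eigenvalues of $H$ (discreteness and finiteness following from $V\ge0$, $V(k)\to0$ and the boundedness of $(-1)^n\Delta^n$) with a corresponding orthonormal family of eigenfunctions $\{u^{(j)}\}_{j=1}^N\in l^2(\mathbb{Z}^d)$, $Hu^{(j)}=\lambda_ju^{(j)}$. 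Setting $\rho(k)=\sum_{j=1}^N u^{(j)}(k)^2$ as in \eqref{rho}, one has
$$
\sum_{j=1}^N|\lambda_j|=-\sum_{j=1}^N\bigl(Hu^{(j)},u^{(j)}\bigr)
=\sum_{k\in\mathbb{Z}^d}V(k)\rho(k)-\sum_{j=1}^N\|\mathcal{D}^nu^{(j)}\|^2.
$$

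Next I would estimate the term $\sum_k V(k)\rho(k)$ by H\"older's inequality with exponents $p=\frac{2-\theta}{1-\theta}$ and $p'=2-\theta$ (note $\tfrac1p+\tfrac1{p'}=1$), obtaining
$$
\sum_{k\in\mathbb{Z}^d}V(k)\rho(k)\le
\Bigl(\sum_{k\in\mathbb{Z}^d}V(k)^{2-\theta}\Bigr)^{\!\frac1{2-\theta}}
\Bigl(\sum_{k\in\mathbb{Z}^d}\rho(k)^{\frac{2-\theta}{1-\theta}}\Bigr)^{\!\frac{1-\theta}{2-\theta}}.
$$
Then I would invoke Theorem~\ref{T:orth}, which gives $\sum_k\rho(k)^{\frac{2-\theta}{1-\theta}}\le \mathrm{K}(\theta)^{\frac1{1-\theta}}\sum_{j=1}^N\|\mathcal{D}^nu^{(j)}\|^2$. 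Writing $R:=\sum_{j=1}^N\|\mathcal{D}^nu^{(j)}\|^2$ and $W:=\bigl(\sum_k V(k)^{2-\theta}\bigr)^{1/(2-\theta)}$, the chain of inequalities yields
$$
\sum_{j=1}^N|\lambda_j|\le W\cdot\mathrm{K}(\theta)^{\frac1{(1-\theta)(2-\theta)}}R^{\frac{1-\theta}{2-\theta}}-R.
$$

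Finally I would maximize the right-hand side over $R\ge0$: the function $\varphi(R)=aR^{\beta}-R$ with $\beta=\frac{1-\theta}{2-\theta}\in(0,1)$ and $a=W\,\mathrm{K}(\theta)^{\frac1{(1-\theta)(2-\theta)}}$ attains its maximum at $R_*=(a\beta)^{1/(1-\beta)}$ with value $\varphi(R_*)=a^{1/(1-\beta)}\beta^{\beta/(1-\beta)}(1-\beta)$. Substituting $1-\beta=\frac1{2-\theta}$, $\beta=\frac{1-\theta}{2-\theta}$, and collecting the powers of $\mathrm{K}(\theta)$ and $W$, a routine simplification gives exactly
$$
\sum_{j=1}^N|\lambda_j|\le \mathrm{K}(\theta)\,\frac{(1-\theta)^{1-\theta}}{(2-\theta)^{2-\theta}}\sum_{k\in\mathbb{Z}^d}V(k)^{2-\theta},
$$
which is \eqref{discrL-T}. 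Since the bound is uniform in $N$ and the sequence of $|\lambda_j|$ is summable independently of how many eigenvalues one takes, the full negative sum is controlled; this also confirms the negative spectrum is discrete and finite. The one genuinely delicate point, rather than the algebra, is the spectral-theoretic preliminary: verifying that $H$ has only finitely many negative eigenvalues (each of finite multiplicity) with an $l^2$ eigenbasis, so that the finite-rank manipulations above are legitimate; this follows from $0\le V\in c_0(\mathbb{Z}^d)$ together with $\sigma((-1)^n\Delta^n)\subset[0,\infty)$, making $V$ a relatively compact perturbation, but it is the step that should be stated carefully. The parameter $\theta$ is any admissible value from \eqref{theta1ndinterval} with $\theta<1$; optimizing over $\theta$ is left to the reader.
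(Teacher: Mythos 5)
Your proof is correct and takes essentially the same route as the paper: the Eden--Foias argument giving $\sum|\lambda_j|=(V,\rho)-\sum_j\|\mathcal{D}^nu^{(j)}\|^2$, H\"older with exponents $2-\theta$ and $\frac{2-\theta}{1-\theta}$, the orthonormal-family bound of Theorem~\ref{T:orth}, and a one-variable maximization (the paper maximizes over $y=\|\rho\|_{l^{(2-\theta)/(1-\theta)}}$ instead of over $R$, which is the same computation and yields the same constant). One exponent slip in your intermediate display: from $\|\rho\|_{p}^{p}\le\mathrm{K}(\theta)^{1/(1-\theta)}R$ with $p=\frac{2-\theta}{1-\theta}$ one gets $a=W\,\mathrm{K}(\theta)^{1/(2-\theta)}$, not $W\,\mathrm{K}(\theta)^{1/((1-\theta)(2-\theta))}$; with this corrected, $a^{2-\theta}=W^{2-\theta}\mathrm{K}(\theta)$ and your maximization produces exactly \eqref{discrL-T}.
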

\begin{proof}
Suppose that there exists $N$ negative eigenvalues
$-\lambda_j<0$, $j=1,\dots,N$ with corresponding
$N$ orthonormal eigenfunctions $u^{(j)}$:
$$
(-1)^n\Delta^nu^{(j)}(k) -V(k)u^{(j)}(k)=-\lambda_ju^{(j)}(k).
$$
Taking the scalar product with $u^{(j)}$, summing the
the results with respect to $j$, and using \eqref{rho},
H\"{o}lder inequality and
\eqref{orth}, we obtain
$$
\aligned
\sum_{j=1}^N\lambda_j=(V,\rho)-\sum_{j=1}^N\|\mathcal{D}^nu^{(j)}\|^2\le\\
\|V\|_{l_{2-\theta}}\|\rho\|_{l_{\frac{2-\theta}{1-\theta}}}
-\mathrm{K}(\theta)^{-\frac1{1-\theta}}
\|\rho\|_{l_{\frac{2-\theta}{1-\theta}}}^{\frac{2-\theta}{1-\theta}}\le\\
\max_{y>0}\left(\|V\|_{l_{2-\theta}}y-
\mathrm{K}(\theta)^{-\frac1{1-\theta}}
y^{\frac{2-\theta}{1-\theta}}\right)=\\
\mathrm{K}(\theta)\frac{(1-\theta)^{1-\theta}}
{(2-\theta)^{2-\theta}}\sum_{k\in\mathbb{Z}^d}
V(k)^{2-\theta}.
\endaligned
$$
\end{proof}
\subsection*{Examples}\label{SS:Examp}
\paragraph{$d=1$, $n=1$, $\theta=1/2$.}
Then $\mathrm{K}=1$ and the negative trace of the
operator
$$
-\Delta -V \quad
\text{in}\quad l^2(\mathbb{Z})
$$
satisfies
$$
\sum|\lambda_j|\le \frac2{3\sqrt{3}}\sum_{\alpha=-\infty}^\infty
V^{3/2}(\alpha).
$$
\paragraph{$d=1$, $n=2$, $\theta=3/4$.}
Then $\mathrm{K}=\sqrt{2}/2$ and the negative trace of the
operator
$$
\Delta^2 -V \quad
\text{in}\quad l^2(\mathbb{Z})
$$
satisfies
$$
\sum|\lambda_j|\le \frac{2\sqrt{2}}{5^{5/4}}\sum_{\alpha=-\infty}^\infty
V^{5/4}(\alpha).
$$
\paragraph{$d\ge3$, $n=1$, $\theta=0$.}
Then $\mathrm{K}=\mathrm{K}_d(0)$ and the negative trace of the
operator
$$
-\Delta -V \quad
\text{in}\quad l^2(\mathbb{Z}^d)
$$
satisfies
$$
\sum|\lambda_j|\le \frac{\mathrm{K}_d(0)}{4}\sum_{\alpha\in\mathbb{Z}^d}
V^{2}(\alpha).
$$
In particular, in three dimensions
$$
\frac{\mathrm{K}_3(0)}{4}=0.0631\dots.
$$

\bibliographystyle{amsplain}

\end{document}